\documentclass[leqno,a4paper]{amsart}
\usepackage{amsmath,amsthm,amssymb}
\usepackage[T1]{fontenc}

\newtheorem{Th}{Theorem}
\newtheorem{Prop}[Th]{Proposition}
\newtheorem{Lemma}[Th]{Lemma}

\theoremstyle{remark}
\newtheorem{Remark}{Remark}

\newtheorem{Problem}{Problem}
\newtheorem*{Notation}{Notation}

\newcommand{\bez}{\nopagebreak\hspace*{\fill}\nolinebreak$\Box$}

\newcommand{\al}{\alpha}

\newcommand{\Q}{\mathbb{Q}}
\newcommand{\R}{{\mathbb{R}}}
\newcommand{\T}{{\mathbb{T}}}
\newcommand{\C}{{\mathbb{C}}}
\newcommand{\Z}{{\mathbb{Z}}}
\newcommand{\N}{{\mathbb{N}}}
\newcommand{\s}{{\mathbb{S}}}
\newcommand{\D}{{\mathbb{D}}}

\newcommand{\ov}{\overline}
\newcommand{\vep}{\varepsilon}
\newcommand{\va}{\varphi}

\newcommand{\var}{\operatorname{Var }}

\begin{document}
\title[On Hausdorff
dimension of the set of closed orbits]{On Hausdorff dimension of
the set of closed orbits for a cylindrical transformation}
\author{K. Fr\k{a}czek
\and M. Lema\'nczyk}
\address{K.\ Fr\k{a}czek \\ Faculty of Mathematics
and Computer Science\\ Nicolaus Copernicus University\\ ul.
Chopina 12/18, 87-100 Toru\'n, Poland}
\address{M.\ Lema\'nczyk\\
Institute of Mathematics of Polish Academy of Sciences, ul.\
\'Sniadeckich 8, 00-950 Warszawa, Poland and Faculty of
Mathematics and Computer Science\\ Nicolaus Copernicus
University\\ ul. Chopina 12/18, 87-100 Toru\'n, Poland}

\email{fraczek@mat.umk.pl, mlem@mat.umk.pl}
\thanks{Research partially supported by Polish MNiSzW grant
N N201 384834; partially supported by Marie Curie ``Transfer of
Knowledge'' EU program -- project MTKD-CT-2005-030042 (TODEQ)}

\subjclass[2000]{37B05, 37C45, 37C29}

\begin{abstract}We deal with Besicovitch's problem of existence
of discrete orbits for transitive cylindrical transformations
$T_\varphi:(x,t)\mapsto(x+\alpha,t+\varphi(x))$ where
$Tx=x+\alpha$ is an irrational rotation on the circle $\T$ and
$\varphi:\T\to\R$ is continuous, i.e.\ we try to estimate how big
can be the set
$D(\alpha,\varphi):=\{x\in\T:|\varphi^{(n)}(x)|\to+\infty\text{ as
}|n|\to+\infty\}$. We show that for almost every $\alpha$ there
exists $\varphi$ such that the Hausdorff dimension of
$D(\alpha,\varphi)$ is at least $1/2$. We also provide a
Diophantine condition on $\alpha$ that guarantees the existence of
$\varphi$ such that the dimension of $D(\alpha,\varphi)$ is
positive. Finally, for some multidimensional rotations $T$ on
$\T^d$, $d\geq3$, we construct smooth $\varphi$ so that the
Hausdorff dimension of $D(\alpha,\varphi)$ is positive.
\end{abstract}

\maketitle

\section{Introduction}
Let $T:X\to X$ be a minimal homeomorphism of a compact metric
space $(X,d)$ and let $\va:X \to\R$ be a continuous function.
Denote by $T_\va:X\times\R\to X\times\R$ the corresponding
cylindrical transformation
\[T_\va(x,t)=(Tx,t+\va(x)).\]
Then $T^n_\va(x,t)=(T^nx,t+\va^{(n)}(x))$ for every integer $n$,
where
\[\va^{(n)}(x)=\left\{
\begin{array}{ccl}
\va(x)+\va(Tx)+\ldots+\va(T^{n-1}x)&\text{ if }&n>0\\
 0&\text{
if }&n=0\\
-(\va(T^{-1}x)+\ldots+\va(T^{n+1}x)+\va(T^{n}x))&\text{ if }&n<0.
\end{array}
\right.\] Cylindrical transformations appear naturally when
studying some autonomous ordinary differential equations on $\R^3$
 or on other non-compact
manifolds (cf.\ \cite{Po} and Section~\ref{difeq}). Moreover, in
the case of circle rotations $T$, cylindrical transformations
yield a broad and interesting class of homeomorphisms of the
plane. If $Tx=x+\alpha$ then every continuous function
$\va:\T\to\R$ defines the homeomorphism
$f_{\alpha,\va}:\R^2\to\R^2$, $f_{\alpha,\va}(re^{2\pi
i\omega})=re^{\varphi(\omega)+2\pi i(\omega+\alpha)}$ for $r\geq
0$ and $\omega\in\T$. The homeomorphism $f_{\alpha,\va}$ has a
fixed point at zero and restricted to $\R^2\setminus\{(0,0)\}$ is
topologically isomorphic to $T_\va$ via the map
\[\T\times\R\ni(\omega,r)\mapsto e^re^{2\pi i \omega}\in\R^2\setminus\{(0,0)\}.\]

A surprising property of  general cylindrical transformations is
that they are never minimal, that is, there are points whose
orbits are not dense \cite{Be}, \cite{Me-Si},  (see also
\cite{LeC-Yo} for a more general non-minimality result in case of
homeomorphisms of the cylinder $\T\times\R$) and the problem of
classifying minimal subsets for such transformations is still
open\footnote{ The situation changes if instead of minimality we
consider so called positive minimality i.e.\ for a continuous map
$T$ (not necessarily invertible) of a locally compact space $X$ we
require all semi-orbits $\{T^nx:\:n\geq0\}$, $x\in X$, to be dense
in $X$. As it has already been noticed in \cite{Aus} (see Chapter
I, Exercise 11 or the subsequent article \cite{Ber}) if there is a
recurrent point in $X$ and $T$ is positively minimal then $X$ has
to be compact. Take now an arbitrary continuous map $T$ of a
locally compact space $X$ and suppose that $M\subset X$ is
positively minimal. Then $M$ is locally compact and it follows
that either $M$ is a discrete orbit or $M$ is compact. Therefore
there are no positively minimal subsets for transitive $T_\varphi$
as above  -- if $M\subset X\times\R$ is positively minimal then it
cannot be  a discrete orbit and  if $M$ is compact then by
\cite{Go-He}, $\varphi$ is a coboundary and therefore $T_\varphi$
is not transitive.}. Clearly, a minimal subset arises if we are
given a discrete orbit. Recall that a subset $S$ of a topological
space is discrete if every point $x\in S$ has a neighborhood $U$
such that $S\cap U=\{x\}$. Moreover, the orbit of $(x,t)\in
X\times\R$ for the cylindrical transformation $T_\va$ is discrete
if and only if
\[|\va^{(n)}(x)|\to+\infty\text{ as }|n|\to+\infty.\]
If $T$ is uniquely ergodic with $\mu$ the only $T$--invariant
measure and if $\int_X\va\,d\mu\neq0$ then by unique ergodicity,
$\va^{(n)}/n\to \int \va\,d\mu$ uniformly as $|n|\to+\infty$.
Therefore $|\va^{(n)}(x)|\to+\infty$ as $|n|\to+\infty$ for each
$x\in X$, i.e.\ $\va$ is transient, and hence every orbit of
$T_{\va}$ is discrete. It follows that the partition of $X\times
\R$ into orbits of $T_{\varphi}$ yields the decomposition into
minimal components. Yet, in one more situation $X\times\R$ is the
union of minimal components -- it is the case when
$\int_X\va\,d\mu=0$ and $\va(x)=j(x)-j(Tx)$  for a continuous
function $j:X\to\R$, i.e.\ when $\va$ is a coboundary; indeed, the
minimal components are of the form $\{(x,j(x)+a):\:x\in X\}$,
$a\in\R$. Clearly, in this case there are no discrete orbits, in
fact, $j$ exists if and only if each orbit of $T_\varphi$ is
bounded \cite{Go-He}.

When we restrict our considerations to $T$ which is a minimal
rotation on a compact metric group (the case which is well known
to be uniquely ergodic) then we have the following.
\begin{Prop}[see \cite{At}, \cite{Go-He} or \cite{Le-Me}]\label{dych}
If $\varphi$ is not transient nor $\va$ is a coboundary then
$T_\va$ has a dense orbit, i.e.\ $T_\va$ is topologically
transitive.
\end{Prop}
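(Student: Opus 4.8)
The plan is to reduce topological transitivity of $T_\va$ to a statement about the group of essential values of the cocycle. Call $a\in\R$ an \emph{essential value} of $\va$ if for every $\vep>0$ and every nonempty open $U\subseteq X$ there are $x\in U$ and $n\in\Z$ with $T^nx\in U$ and $|\va^{(n)}(x)-a|<\vep$, and let $E(\va)\subseteq\R$ be the set of all such $a$. The first step is to prove that $T_\va$ is topologically transitive if and only if $E(\va)=\R$. Since $X\times\R$ is a perfect second countable Baire space, transitivity of the homeomorphism $T_\va$ is equivalent to: $T_\va^nW_1\cap W_2\neq\emptyset$ for some $n\in\Z$ whenever $W_1,W_2$ are nonempty open. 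For the nontrivial implication assume $E(\va)=\R$ and take nonempty open boxes $W_1\supseteq U_1\times I_1$, $W_2\supseteq U_2\times I_2$. By minimality of $T$ choose $m$ and a nonempty open $U_1'\subseteq U_1$ with $T^mU_1'\subseteq U_2$; shrink $U_1'$ using continuity of $\va^{(m)}$ so that $\va^{(m)}$ stays within $\vep$ of some value $b$ on $U_1'$; then apply the defining property of $E(\va)$ to the set $U_1'$ and the target value $a$ equal to the difference of the midpoints of $I_2$ and $I_1$ diminished by $b$, and concatenate the resulting return of $U_1'$ with the translation by $T^m$ via the cocycle identity $\va^{(p+q)}(x)=\va^{(q)}(x)+\va^{(p)}(T^qx)$. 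This places a point of $W_1$ into $W_2$. The converse is immediate from the density of a single orbit.

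The second step is that $E(\va)$ is a closed subgroup of $\R$, hence equals $\{0\}$, $c\Z$ for some $c>0$, or $\R$. Closedness is immediate, $0\in E(\va)$ is obtained with $n=0$, symmetry follows from $\va^{(-n)}(T^nx)=-\va^{(n)}(x)$, and closure under addition is again a concatenation: given $a,b\in E(\va)$ and a nonempty open $U$, first produce $x_1\in U$ and $n_1$ with $T^{n_1}x_1\in U$, then shrink to a neighbourhood $V\ni x_1$ with $T^{n_1}V\subseteq U$ on which $\va^{(n_1)}$ is within $\vep$ of $a$, apply the property of $b$ to the nonempty open set $T^{n_1}V$, and splice the two returns with the cocycle identity, exactly as above.

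The third and decisive step is: if $E(\va)$ is discrete (that is, $\{0\}$ or $c\Z$, $c>0$), then $\va$ is transient or a coboundary; the Proposition then follows by contraposition, since transitivity is equivalent to $E(\va)=\R$. If $\int_X\va\,d\mu\neq0$, unique ergodicity of the minimal group rotation gives $\va^{(n)}/n\to\int_X\va\,d\mu$ uniformly, so $\va$ is transient. Assume therefore $\int_X\va\,d\mu=0$ and that $\va$ is not transient. I would then analyse a single orbit closure $M=\overline{\mathcal{O}_{T_\va}(x_0,0)}$: non-transience produces a subsequence $\va^{(n_k)}(x_0)\to a\in\R$, so $M$ is not discrete, while the concatenation argument of the second step shows that $M$ meets each fibre $\{y\}\times\R$ inside a single coset of $E(\va)$. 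When $E(\va)=\{0\}$ this forces $M$ to be the graph of a $T$-equivariant function $f$, defined on a dense $T$-invariant set containing the orbit of $x_0$ and satisfying $f\circ T-f=\va$ there; the isometry, and hence uniform recurrence structure, of the group rotation $T$ then lets one propagate the partial recurrence witnessed by $(y_0,a)\in M$ over the whole orbit and conclude that $\{\va^{(n)}(x_0):n\in\Z\}$ is bounded. The case $E(\va)=c\Z$ reduces to the previous one after passing to the $\R/c\Z$-valued cocycle $\va\bmod c$. In all cases one reaches boundedness of an orbit of $T_\va$, and the Gottschalk--Hedlund theorem — $\va$ is a continuous coboundary exactly when some orbit of $T_\va$ is bounded — gives that $\va$ is a coboundary. (An alternative to the orbit-closure analysis is to apply Atkinson's recurrence theorem to the ergodic system $(X,\cb,\mu)$ with $\int\va\,d\mu=0$ to get $\mu$-a.e.\ recurrence of $T_\va$, and then to upgrade it to a topological statement using unique ergodicity.)

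The main obstacle is precisely this last step: extracting, from the qualitative facts ``$E(\va)$ discrete'' and ``$\va$ not transient'', the quantitative conclusion that some $T_\va$-orbit is genuinely bounded, so that Gottschalk--Hedlund yields a \emph{continuous} rather than merely measurable transfer function. This is where equicontinuity of the group rotation $T$ is indispensable, and it is the technical heart of \cite{At}, \cite{Go-He} and \cite{Le-Me}.
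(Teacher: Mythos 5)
The paper itself gives no proof of Proposition~\ref{dych} --- it is quoted from \cite{At}, \cite{Go-He} and \cite{Le-Me} --- so the only question is whether your proposal is a proof. Your first two steps are correct and are the standard reduction of \cite{Le-Me}: with $E(\va)$ defined by returns of open sets, the equivalence ``$T_\va$ transitive $\Leftrightarrow$ $E(\va)=\R$'' (using minimality of $T$, continuity of each $\va^{(m)}$ and the cocycle identity) and the fact that $E(\va)$ is a closed subgroup of $\R$ are proved essentially as you say. Even the claim that the orbit closure $M$ of a non-transience witness meets each fibre in a single coset of $E(\va)$ can be justified by the same transport/concatenation trick, although you do not spell out the needed step of upgrading ``returns near the one point $y$'' to ``returns inside every open set''.

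The genuine gap is your third step, and it is exactly where the entire content of the proposition lies: from ``$E(\va)$ discrete and $\va$ not transient'' you must produce a bounded $T_\va$-orbit so that Gottschalk--Hedlund applies. What your analysis actually yields when $E(\va)=\{0\}$ is an equivariant function $f$ with closed graph, defined on a dense invariant set and admitting one recurrent point; such an $f$ can a priori be unbounded (a closed graph permits blow-up near the complement of the domain), and the sentence ``the isometry \dots lets one propagate the partial recurrence \dots and conclude that $\{\va^{(n)}(x_0)\}$ is bounded'' is precisely the assertion to be proved, not an argument. That no soft argument can close this is shown by the measurable analogue: a recurrent zero-mean cocycle with trivial group of essential values need not be a (measurable) coboundary --- Schmidt's non-regular cocycles exist even over irrational rotations --- so the group-rotation/equicontinuity structure must be used quantitatively, and your sketch does not indicate how. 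The reduction of the case $E(\va)=c\Z$ ``by passing to $\va\bmod c$'' has the same character: a continuous circle-valued transfer function only gives that $\va^{(n)}(x_0)$ stays at bounded distance from $c\Z$, which is not yet a bounded orbit. You acknowledge this yourself by deferring ``the technical heart'' to \cite{At}, \cite{Go-He}, \cite{Le-Me}; with that step outsourced, the proposal is a correct plan in the spirit of \cite{Le-Me}, but not a proof of the proposition.
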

\noindent Note that, by Proposition~\ref{dych}, it follows that if
$T$ is a minimal rotation, $\va$ has zero mean and $T_\va$ has a
discrete orbit then $T_\va$ is automatically topologically
transitive.

From now on we will only deal with the transitive case and we
assume that $T$ is a minimal rotation on $X$. In this case the set
of transitive points is $G_\delta$ and dense, however it is always
a proper subset of $X\times\R$ since $T_\varphi$ is not minimal.
This set is usually also large from the measure-theoretic point of
view; indeed, if we assume ergodicity of $T_\va$ (with respect to
the product of Haar measure on $X$ and Lebesgue measure on $\R$)
then each open subset of $X\times\R$ has positive measure and
since $X\times\R$ is second countable, the complement of the set
of transitive points has measure zero. Even in case of $T$ a
minimal rotation, the problem of classifying possible minimal
subsets for the corresponding cylindrical transformations remains
open. It is even open in case of irrational rotations on the
circle, although in the latter case we would like to emphasize
that if $\va$ is too smooth then there are no minimal subsets at
all. More precisely, if $\va:\T\to\R$ is of bounded variation then
$T_\varphi$ has no minimal subset (see \cite{Ma-Sh} and
\cite{Me-Si}). However, Besicovitch \cite{Be} already in 1951
showed that, if we require $\va$ to be only continuous, then for
$T_\varphi$ a minimal subset can exist, namely, despite its
topological transitivity $T_\varphi$ can have a discrete orbit.
The problem of coexistence of dense orbits (topological
transitivity) and discrete orbits for cylindrical transformations
when $T$ is a minimal rotation is called the Besicovitch problem,
and we will call the cylindrical transformation $T_\va$
Besicovitch if indeed dense and discrete orbits for $T_\va$
coexist. In the present paper we will deal with the Besicovitch
problem for rotations on finite dimensional tori $\T^d$.

In Section~\ref{secpodst} we show that the phenomenon discovered
by Besicovitch \cite{Be} -- for a particular irrational $\alpha$
there exists a continuous $\va:\T\to\R$ such that $T_\va$ is
transitive and admits discrete orbits -- in fact happens for each
irrational $\alpha$. In other words, we show that for every
irrational $\alpha\in\T$ there exists a continuous $\va:\T\to\R$
such that $T_\va$ is Besicovitch; note that this implies the
existence of Besicovitch cylindrical transformations over each
minimal rotation on $\T^d$, $d\geq2$. Indeed, if $R$ is a rotation
on $\T^{d-1}$ such that $T\times R$ is minimal then the
cylindrical transformation $(T\times R)_{\bar{\va}}$, with
$\bar{\va}(x_1,\ldots,x_d)=\va(x_1)$, is Besicovitch if $T_\va$ is
Besicovitch  (this follows from Proposition~\ref{dych} since
$\widetilde{\varphi}$ is not transient and since no orbit of
$(T\times R)_{\widetilde{\varphi}}$ is bounded,
$\widetilde{\varphi}$ is not a coboundary).

In Section~\ref{holder}, for $\alpha$ satisfying some Diophantine
conditions, our construction of $\va$ is improved and we obtain
$\gamma$--H\"older continuous functions $\va$ such that $T_\va$ is
Besicovitch (it turns out however that in all our constructions
$\gamma<1/2$). A slight modification of the construction yields
$\va:\T\to\R$ whose Fourier coefficients are $\mbox{O}(\log
|n|/|n|)$; see Section~\ref{oduze}. We have already mentioned that
in case $\va$ is of bounded variation, $T_\va$ is not Besicovitch.
This is one more (direct) consequence of the classical
Denjoy-Koksma inequality (see e.g.\ \cite{Ku-Ni} or \cite{Her});
indeed, the inequality
\begin{equation}\label{Dkoksma}
|\va^{(q_n)}(x)|\leq\operatorname{Var}\,\varphi\;\;\mbox{ for each
$x\in\T$},\end{equation} where $(q_n)$ is the sequence of
denominators of $\alpha$ means in particular that the orbit of
each point $(x,t)$ is not discrete (in fact $T_\va$ has no minimal
subsets at all; see \cite{Ma-Sh} and \cite{Me-Si}). In
\cite{Aa-Le-Ma-Na}, an inequality similar to~(\ref{Dkoksma}) has
been proved in $L^2$ for functions whose Fourier coefficients are
$\mbox{O}(1/|n|)$, we have been however unable to decide whether
there exists a continuous $\va:\T\to\R$ whose Fourier coefficients
are $\mbox{O}(1/|n|)$ and which is Besicovitch for some rotation
$Tx=x+\alpha$. Recently, in \cite{Kw-Si}, for every minimal
odometer $T$ the existence of Besicovitch cylindrical
transformation  has been proved to exist.

We then pass to deal with the Besicovitch problem for minimal
rotations on higher dimensional tori $\T^d$, $d\geq2$. As it was
shown by Yoccoz in \cite{Yo}, the Denjoy-Koksma inequality does
not hold anymore in higher dimensions and one can expect that
among smooth cylindrical transformations there are Besicovitch
cylindrical maps. Such are indeed shown to exist in
Section~\ref{highdim}. More precisely, we prove that for every
$r\geq 1$ there exist $d\geq 3$, a minimal rotation
$T:\T^d\to\T^d$ and $\va:\T^d\to\R$ of class $C^r$ such that
$T_\va$ is Besicovitch; the construction is based on Yoccoz's
method from \cite{Yo}.

Once we know that Besicovitch cylindrical transformations exist
for each minimal rotation on $\T^d$, another natural problem
arises to discuss the size of the set of points whose orbits are
discrete. More precisely, we will deal with the set
\[D(\alpha,\va)=\{x\in\T^d:\lim_{n\to\pm\infty}|\va^{(n)}(x)|\to+\infty\}.\]
By our standing assumption of transitivity, $\int\va(x)\,dx=0$ and
thus $T_\va$ is recurrent as an infinite measure-preserving system
(see \cite{Aa}, \cite{Sch}), so for a.e.\ $x\in \T^d$ there exists
$k_n=k_n(x)\to+\infty$ such that $\va^{(k_n)}(x)\to 0$, hence
$D(\alpha,\va)$ has zero Lebesgue measure. Moreover,  the set of
transitive points for $T_\va$ is $G_\delta$ dense, so the set of
points whose orbits are discrete is a first category set.
Furthermore, this set is equal to $D(\alpha,\va)\times\R$, so
$D(\alpha,\va)$ is a first category subset of $\T^d$ (which is
dense if it is nonempty). Consequently, $D(\alpha,\va)$ is small
from both the topological and the measure theoretical point of
view. We are interested in the Hausdorff dimension of
$D(\alpha,\va)$.

If $d=1$ then for almost every $\alpha\in\T$, using a modification
of the construction from Section~\ref{secpodst}, we built a
continuous function $\va:\T\to\R$ with zero mean such that
$\dim_HD(\alpha,\va)>0$. Moreover, we give a lower bound on the
Hausdorff dimension related to some Diophantine condition of
$\alpha$; see Section~\ref{secdimension}. We also study the
coexistence problem of discrete orbits of different types, more
precisely, the size of sets
\[D^{s_-s_+}(\alpha,\va)=\{x\in\T:\lim_{m\to-\infty}\va^{(m)}(x)\to
s_-\infty\text{ and } \lim_{m\to+\infty}\va^{(m)}(x)\to
s_+\infty\}\] for $s_-,s_+\in\{-,+\}$ is investigated. We show
that the coexistence of all types of discrete orbits appears for
some (transitive) cylindrical transformations; we mention that the
same phenomenon was also observed for cylindrical transformations
over odometers in \cite{Kw-Si}. For almost every $\alpha\in\T$ we
construct a class of examples for which
$\dim_HD^{s_-s_+}(\alpha,\va)\geq 1/2$ for every pair $(s_-,s_+)$.
This gives evidence that for transitive  homeomorphisms of the
plane the coexistence of orbits with completely different behavior
is possible. Indeed, returning to the homeomorphisms of the plane
mentioned at the beginning of this section, let us consider
$f_{\alpha,\va}:\R^2\to\R^2$. Then for each $\omega\in\T$ points
from the ray $\operatorname{Ray}(\omega)=\{re^{i\omega}:r>0\}$
generate orbits of the same type and
\begin{itemize}
\item if $\omega\in D^{--}(\alpha,\va)$ then each
$x\in\operatorname{Ray}(\omega)$ generates a homoclinic orbit
attracted by zero;
\item if $\omega\in D^{++}(\alpha,\va)$ then each
$x\in\operatorname{Ray}(\omega)$ generates  a discrete orbit;
\item if $\omega\in D^{-+}(\alpha,\va) (D^{+-}(\alpha,\va))$ then  for each
$x\in\operatorname{Ray}(\omega)$ one semi-orbit is attracted by
zero and  another semiorbit escapes to the infinity.
\end{itemize}
Therefore  there exists a transitive homeomorphism
$f_{\alpha,\va}:\R^2\to\R^2$ such that  each of the above orbit
type appears and the Hausdorff dimension of the set of the
corresponding points is no smaller than $3/2$.

In the higher dimensional case, for every $r\geq 1$ we construct
$\alpha\in\T^d$ and a $C^r$--function $\va:\T^d\to\R$ with zero
mean such that $\dim_HD^{++}(\alpha,\va)>0$; see
Section~\ref{highdim}.

As an application, in Section~\ref{difeq} we demonstrate a family
of continuous (or even H\"older) perturbations of some integrable
systems which completely destroys its integrable dynamics. More
precisely, the perturbed systems have plenty of orbits which are
dense, homoclinic and heteroclinic to limit cycles.

The authors are grateful to the referee for pointing out the
references \cite{Ber} and \cite{Ma-Sh}.

\section{Construction}\label{secpodst}
By $\T$ we will mean the group $\R/\Z$ which most of time will be
treated as $[0,1)$ with addition mod $1$. By $\{t\}$ we denote the
fractional part of $t$ and $\|t\|$ is the distance of $t$ from the
set of integers. Denote by $\lfloor t\rfloor$ and $\lceil t\rceil$
the floor and the ceiling of $t$ respectively.

We will show that for each irrational $\al\in\T$ we can construct
a continuous $\va:\T\to\R$ so that the corresponding cylindrical
flow $T_{\va}$ is Besicovitch, i.e.\ it is topologically
transitive but it has some minimal orbits. By
Proposition~\ref{dych}, we only need to construct a continuous
function $\va$ with integral zero and such that
$$
\va^{(n)}(0)\to+\infty\;\;\mbox{when}\;\;|n|\to+\infty;$$ indeed
such a $\va$ is neither a coboundary nor transient, so $T_\va$
must be topologically transitive.

Fix an irrational $\al\in[0,1)$. Let $(p_n/q_n)$ be the sequence
of convergents of $\al$, i.e.
\begin{eqnarray*}
q_{-1}=0,& q_0=1,& q_{n}=a_nq_{n-1}+q_{n-2}\text{ for }n\geq 1\\
p_{-1}=1,& p_0=0,& p_{n}=a_np_{n-1}+p_{n-2}\text{ for }n\geq 1,
\end{eqnarray*} where $[0;a_1,a_2,\ldots]$ is the continued
fraction of $\alpha$. We have (see e.g.\ \cite{Kh})
\begin{equation}\label{m1}
\frac1{2q_nq_{n+1}}
<(-1)^n\left(\alpha-\frac{p_n}{q_n}\right)=\left|\alpha-\frac{p_n}{q_n}\right|<
\frac1{q_nq_{n+1}},
\end{equation}
hence
\begin{equation}\label{mip}
\frac1{2q_{n+1}} <\|q_n\alpha\|<\frac1{q_{n+1}}.
\end{equation}
 Let $(M_n)$ be a sequence of natural numbers such that
\begin{equation}\label{m2}M_n\to+\infty\end{equation}
and
\begin{equation}\label{m3}\sum_{n=1}^\infty\frac{M_n}{q_{n-1}}<+\infty.\end{equation}
Set
\begin{equation}\label{m4}L_n=M_n\cdot\frac{q_nq_{n+1}}{q_{n-1}}.\end{equation}
In view of~(\ref{m4}) and~(\ref{m3}),
\begin{equation}\label{m5}
\sum_{n=1}^\infty \frac{L_n}{q_nq_{n+1}}<+\infty.
\end{equation}

We now define $f_n:[0,1)\to\R^+$ which is Lipschitz continuous
(with the Lipschitz constant equal to $L_n$), $1/q_n$--periodic,
$f_n(0)=0$ and $|f_n'(x)|=L_n$ for all $x\in[0,1)$ except for the
integer multiples of $\frac{1}{2q_n}$. Notice that $f_n(y)=L_ny$
for $y\in[0,1/(2q_n))$ and $f_n(y)=L_n(1/q_n-y)$ for
$y\in[1/(2q_n),1/q_n]$. Using $1/q_n$--periodicity of $f_n$
and~(\ref{m1}) for each $x\in[0,1)$ we have
$$
|f_n(x+\al)-f_n(x)|=\left|f_n(x+\al)-f_n(x+\frac{p_n}{q_n})\right|\leq
L_n\left|\al-\frac{p_n}{q_n}\right|<\frac{L_n}{q_nq_{n+1}},$$ so
\begin{equation}\label{normsup}
\|f_n(\,\cdot\,+\alpha)-f_n(\,\cdot\,)\|_{C(\T)}<\frac{L_n}{q_nq_{n+1}}
\end{equation}
and it follows from~(\ref{m5}) that the series
$$
\va(x)=\sum_{n=1}^\infty(f_n(x+\alpha)-f_n(x))$$ converges
uniformly, so $\va$ is continuous and clearly
$\int_0^1\va(x)\,dx=0$. For each integer $k$ we  have
$$
\va^{(k)}(x)=\sum_{n=1}^\infty (f_n(x+k\al)-f_n(x)),$$ in
particular
\begin{equation}\label{m8}\va^{(k)}(0)=\sum_{n=1}^\infty
f_n(k\al).\end{equation}

We will show that $\va^{(k)}(0)\to+\infty$ when $|k|\to+\infty$.
Fix a nonzero integer $k$. There is a unique $n=n(k)\geq 0$ such
that $q_n\leq |k|<q_{n+1}$. By~(\ref{m1}) applied to $n+1$ we have
$$\frac{|k|}{2q_{n+1}q_{n+2}}<\left|k\al
-k\frac{p_{n+1}}{q_{n+1}}\right|<\frac
{|k|}{q_{n+1}q_{n+2}}<\frac{1}{q_{n+2}},$$ so
\begin{equation*}\label{m9}
\left|k\al-\frac{kp_{n+1}}{q_{n+1}}\right|>\frac{q_n}{2q_{n+1}q_{n+2}}.
\end{equation*}
Moreover. \begin{eqnarray*}
\left|k\al-\frac{kp_{n+1}}{q_{n+1}}\right|&<&\frac{1}{q_{n+2}}=
\frac1{q_{n+1}}-\left(\frac1{q_{n+1}}-\frac{1}{q_{n+2}}\right)=
\frac1{q_{n+1}}-\frac{q_{n+2}-q_{n+1}}{q_{n+1}q_{n+2}}\\
&\leq&\frac1{q_{n+1}}-\frac{q_n}{q_{n+1}q_{n+2}}<
\frac1{q_{n+1}}-\frac{q_n}{2q_{n+1}q_{n+2}}.
\end{eqnarray*}
Since $f_{n+1}$ is $1/q_{n+1}$--periodic and
$f_{n+1}(-x)=f_{n+1}(x)$,
\[f_{n+1}(k\al)=f_{n+1}\left(k\al-k\frac{p_{n+1}}{q_{n+1}}\right)=
f_{n+1}\left(\left|k\al-k\frac{p_{n+1}}{q_{n+1}}\right|\right).\]
As
\[\frac{q_n}{2q_{n+1}q_{n+2}}<\left|k\al-\frac{kp_{n+1}}
{q_{n+1}}\right|<\frac1{q_{n+1}}-\frac{q_n}{2q_{n+1}q_{n+2}},\]
by the definition of $f_{n+1}$ and (\ref{m4}), \[f_{n+1}(k\al)\geq
f_{n+1}\left(\frac{q_n}{2q_{n+1}q_{n+2}}\right)=
L_{n+1}\cdot\frac{q_n}{2q_{n+1}q_{n+2}}=M_{n+1}/2.\]
Since all functions $f_l$ are nonnegative,  it follows that
$$\va^{(k)}(0)=\sum_{l=1}^\infty f_{l}(k\al)\geq f_{n+1}(k\al)\geq
M_{n+1}/2$$ which tends to $+\infty$ in view of~(\ref{m2}) and of
the fact that $n=n(k)\to+\infty$ when $|k|\to+\infty$.

\section{H\"older continuity condition}\label{holder}
We need the following simple lemma.
\begin{Lemma}\label{kf1} Let $(X,d)$ be a compact metric space.
Let $(w_n)_{n=1}^\infty$ be an increasing sequence of positive
real numbers with $w_n\to+\infty$ such that for every $0<\beta<1$
there exists $D_\beta>0$ for which
\begin{eqnarray}\label{asla}
\sum_{k=1}^nw_k^\beta\leq D_\beta w^\beta_n\;\;\mbox{ and
}\sum_{k=n}^{\infty}\frac{1}{w_k^\beta}\leq\frac{D_\beta}{w_n^{\beta}}\mbox{
for all }n\in\N.
\end{eqnarray}
 Assume that $\va(x)=\sum_{n=1}^\infty\va_n(x)$,
where $\va_n:X\to\R$ is Lipschitz continuous with a Lipschitz
constant $L(\va_n)=L_n$ such that for some $0<\gamma<1$ we have
\[L_n\leq w_n^{1-\gamma} \;\;\text{ and }\;\;\|\va_n\|_{C(X)}\leq
\frac1{w_n^\gamma} \;\;\text{ for }n\geq 1.\] Then $\va:X\to\R$ is
$\gamma$--H\"older continuous.
\end{Lemma}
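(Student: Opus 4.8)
The plan is to estimate $|\va(x)-\va(y)|$ directly by splitting the series $\sum_n\va_n$ at an index $N=N(x,y)$ chosen so as to balance the two obvious bounds on each summand. First note that \eqref{asla}, applied with $n=1$ and $\beta=\gamma$, gives $\sum_{k=1}^\infty w_k^{-\gamma}\le D_\gamma w_1^{-\gamma}<+\infty$, hence $\sum_n\|\va_n\|_{C(X)}<+\infty$; so the series defining $\va$ converges uniformly and $\va$ is continuous and bounded. For any $x,y\in X$ we then have $|\va(x)-\va(y)|\le\sum_{n=1}^\infty|\va_n(x)-\va_n(y)|$, where each summand admits the two bounds
\[|\va_n(x)-\va_n(y)|\le L_n\,d(x,y)\le w_n^{1-\gamma}\,d(x,y)\qquad\text{and}\qquad|\va_n(x)-\va_n(y)|\le 2\|\va_n\|_{C(X)}\le\frac{2}{w_n^\gamma}.\]

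Fix $x\neq y$ and set $d=d(x,y)>0$. Since $(w_n)$ is increasing with $w_n\to+\infty$, let $N\ge 1$ be the smallest index with $w_N\ge 1/d$; then $w_{N-1}<1/d\le w_N$, the left inequality being vacuous if $N=1$. I would then use the Lipschitz bound on the terms $n<N$ and the uniform bound on the terms $n\ge N$:
\[\sum_{n=1}^{N-1}|\va_n(x)-\va_n(y)|\le d\sum_{n=1}^{N-1}w_n^{1-\gamma}\le d\,D_{1-\gamma}\,w_{N-1}^{1-\gamma}\le D_{1-\gamma}\,d\cdot d^{-(1-\gamma)}=D_{1-\gamma}\,d^{\gamma},\]
\[\sum_{n=N}^{\infty}|\va_n(x)-\va_n(y)|\le 2\sum_{n=N}^{\infty}\frac1{w_n^\gamma}\le\frac{2D_\gamma}{w_N^\gamma}\le 2D_\gamma\,d^{\gamma},\]
where the first estimate uses \eqref{asla} with $\beta=1-\gamma\in(0,1)$ and $w_{N-1}<1/d$, and the second uses \eqref{asla} with $\beta=\gamma\in(0,1)$ and $w_N\ge 1/d$. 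Adding the two lines gives $|\va(x)-\va(y)|\le(D_{1-\gamma}+2D_\gamma)\,d(x,y)^\gamma$, which is the desired $\gamma$--H\"older estimate (the case $x=y$ being trivial).

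I do not expect a genuine obstacle here beyond bookkeeping. The only real design choice is the cutoff: $N$ is taken to be the first index with $w_N\ge 1/d(x,y)$, which is precisely where the two per-term bounds $w_n^{1-\gamma}d(x,y)$ and $w_n^{-\gamma}$ cross, so that after summation both halves come out of order $d(x,y)^\gamma$; any other reasonable cutoff would only cost an extra constant. The points to be careful about are merely that $1-\gamma$ and $\gamma$ both lie in $(0,1)$ so that hypothesis \eqref{asla} is applicable to each half, that one pairs the ``increasing partial sum'' estimate with the Lipschitz bound and the ``tail'' estimate with the sup bound (and not the other way around), and the degenerate case $N=1$, in which the first sum is empty.
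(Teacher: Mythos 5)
Your proof is correct and follows essentially the same route as the paper: split the series at the index where $w_n$ crosses $1/d(x,y)$, bound the initial terms by the Lipschitz constants and the tail by the sup norms, and apply hypothesis (\ref{asla}) with $\beta=1-\gamma$ and $\beta=\gamma$ respectively. The only (harmless) difference is your explicit treatment of the cutoff and the degenerate case $N=1$, which the paper leaves implicit.
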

\begin{proof}
Suppose that $$\frac1{w_{n+1}}<d(x,y)\leq\frac1{w_n}.$$ Then
\begin{eqnarray*}
|\va(x)-\va(y)|&\leq&\sum_{k=1}^n|\va_k(x)-\va_k(y)|+
\sum_{k=n+1}^\infty|\va_k(x)-\va_k(y)|\\
&\leq&\sum_{k=1}^nL_kd(x,y)+2\sum_{k=n+1}^\infty\|\va_k\|_{C(X)}\\
&\leq&
d(x,y)\sum_{k=1}^nw_k^{1-\gamma}+2\sum_{k=n+1}^\infty\frac1{w_k^\gamma}\\
&\leq&
D_{1-\gamma}d(x,y)w_n^{1-\gamma}+2D_{\gamma}\frac1{w_{n+1}^\gamma}\leq
Cd(x,y)^\gamma
\end{eqnarray*}
because $d(x,y)^{1-\gamma}\leq\frac1{w_n^{1-\gamma}}$ and
$\frac1{w_{n+1}^\gamma}\leq d(x,y)^\gamma$.
\end{proof}

\begin{Remark}\label{lacunary}
Recall that if $(v_n)_{n=1}^\infty$ is a lacunary sequence, i.e.\
there exists $A>1$ such that $v_{n+1}>Av_n$ for all $n\geq 1$,
then for each $1\leq k< n$ we have $v_n>A^{n-k}v_k$, so
\[\sum_{k=1}^nv_k<\sum_{k=1}^n\frac{v_n}{A^{n-k}}<v_n\frac{A}{A-1}\]
and (by changing the role of $n$ and $k$)
\[\sum_{k=n}^\infty \frac{1}{v_k}<\sum_{k=n}^\infty
\frac{1}{A^{k-n}v_n}<\frac{1}{v_n}\frac{A}{A-1}.\] Note that if
$(v_n)_{n=1}^\infty$ is lacunary, then for each $0<\beta<1$ also
$(v_n^\beta)_{n=1}^\infty$ is lacunary (with $A$ replaced by
$A^\beta$) and therefore the assumption~(\ref{asla}) is satisfied
in this case.
\end{Remark}

Let $(q_n)_{n=0}^\infty$ be the sequence of denominators of an
irrational number $\alpha\in\T$. Note that
\[q_{n+2}=a_{n+2}q_{n+1}+q_n\geq q_{n+1}+q_n>2q_n.\]
Therefore, the sequence $(w_n)_{n=0}^\infty$, $w_n:=q_nq_{n+1}$
 is lacunary with $A=2$. It follows that
for each $0<\beta<1$ setting $D_\beta=2^{\beta}/(2^{\beta}-1)$ we
have
\begin{equation}\label{mian}
\sum_{k=1}^nw_k^\beta\leq D_\beta w^\beta_n\;\;\mbox{ and
}\sum_{k=n}^{\infty}\frac{1}{w_k^\beta}\leq\frac{D_\beta}{w_n^{\beta}}\mbox{
for all }n\in\N.
\end{equation}

\begin{Notation}
For every $a\geq 1$ denote by $DC(a)$ the set of irrational
numbers $\alpha\in\T$ satisfying the  Diophantine condition
\[\left|\alpha-\frac{p}{q}\right|\geq\frac{1}{C|q|^{1+a}}\text{ for all }p,q\in \Z,
\;q\neq 0\]
for some constant $C>0$. Recall that (see \cite{Kh}) $\alpha\in
DC(a)$ if and only if there exists $C>0$ such that $q_{n+1}\leq
C\cdot q_n^{a}$ for all $n\geq 1$. Moreover, if $a>1$ then $DC(a)$
has full Lebesgue measure.
\end{Notation}

\begin{Th}
Assume that $\alpha\in DC(a)$ for some $a\geq 1$. Then for every
$0<\gamma<\frac1{(1+a)a}$ there exists a $\gamma$--H\"older
continuous function $\va:\T\to\R$ with zero mean such that the
cylindrical transformation $T_{\va}:\T\times\R\to\T\times\R$,
$T_{\va}(x,s)=(x+\alpha,s+\va(x))$ is Besicovitch.
\end{Th}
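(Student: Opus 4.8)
The plan is to refine the construction of Section~\ref{secpodst} so that the building blocks $\va_n(x) = f_n(x+\alpha) - f_n(x)$ satisfy the hypotheses of Lemma~\ref{kf1} with a suitable weight sequence, while retaining the key lower bound $f_{n+1}(k\alpha) \geq M_{n+1}/2$ that forced $\va^{(k)}(0) \to +\infty$. Recall that $\va_n$ is the difference of the $1/q_n$-periodic tent function $f_n$ (with slopes $\pm L_n$) and its $\alpha$-translate; it is Lipschitz with constant at most $2L_n$, and by~(\ref{normsup}) satisfies $\|\va_n\|_{C(\T)} < L_n/(q_n q_{n+1})$. In Section~\ref{secpodst} the amplitude of $f_n$ was $L_n/(2q_n) = M_n q_{n+1}/(2 q_{n-1})$, which blows up badly; the point of the present theorem is that by choosing $(M_n)$ to grow only mildly we can make both $L_n$ and $\|\va_n\|_{C(\T)}$ polynomially controlled in $w_n := q_n q_{n+1}$, and $w_n$ is lacunary (with ratio $>2$, as noted before the Notation), so~(\ref{asla})/(\ref{mian}) holds for every $0<\beta<1$.

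First I would fix $\alpha \in DC(a)$, so that $q_{n+1} \leq C q_n^a$ for all $n$. Next, I would choose the free parameters: take $M_n = n$ (this gives $M_n \to +\infty$ and, since $q_{n-1}$ grows at least geometrically, $\sum M_n/q_{n-1} < +\infty$, so~(\ref{m2}) and~(\ref{m3}) hold, and $L_n = M_n q_n q_{n+1}/q_{n-1}$ as in~(\ref{m4}) so that the computation in Section~\ref{secpodst} giving $f_{n+1}(k\alpha)\geq M_{n+1}/2$ goes through verbatim). Then I would estimate $L_n$ and $\|\va_n\|_{C(\T)}$ in terms of $w_n = q_n q_{n+1}$. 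Using $q_{n-1} \geq q_n^{1/a}$ (from $q_n \leq C q_{n-1}^a$, absorbing $C$) and $q_n \leq q_{n+1} \leq C q_n^a$, one gets $q_n q_{n+1} \leq C q_n^{1+a}$, hence $q_n \geq c\, w_n^{1/(1+a)}$, and therefore
\[
L_n = M_n \frac{w_n}{q_{n-1}} \leq M_n \frac{w_n}{q_n^{1/a}} \leq C' M_n\, w_n^{1 - \frac{1}{(1+a)a}},
\]
while $\|\va_n\|_{C(\T)} < L_n/w_n \leq C' M_n\, w_n^{-\frac{1}{(1+a)a}}$. Since $M_n = n$ grows sub-polynomially, for any $\gamma < \frac{1}{(1+a)a}$ we can absorb the factor $M_n$ (say into $w_n^\varepsilon$ for a tiny $\varepsilon$, using lacunarity) and conclude $L_n \leq w_n^{1-\gamma}$ and $\|\va_n\|_{C(\T)} \leq 1/w_n^\gamma$ for all $n$ large enough; the finitely many small $n$ are harmless, or one rescales $\va$ by a constant. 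Applying Lemma~\ref{kf1} with this $(w_n)$ (which satisfies~(\ref{asla}) by~(\ref{mian})) yields that $\va$ is $\gamma$-H\"older. Finally, $\int_0^1 \va\,dx = 0$ as each $\va_n$ is a difference of a function and its translate, and $\va^{(k)}(0) = \sum_n f_n(k\alpha) \geq f_{n(k)+1}(k\alpha) \geq M_{n(k)+1}/2 \to +\infty$ exactly as in Section~\ref{secpodst}; hence $\va$ is neither a coboundary nor transient, and Proposition~\ref{dych} gives that $T_\va$ is topologically transitive with a discrete orbit through $(0,0)$, i.e.\ Besicovitch.

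The main obstacle is bookkeeping the exponents: one must check that the \emph{same} choice of $(M_n)$ and $(L_n)$ simultaneously (i) keeps the series~(\ref{m5}) convergent and the lower bound $f_{n+1}(k\alpha)\geq M_{n+1}/2$ intact, and (ii) makes $L_n$ and $\|\va_n\|_{C(\T)}$ fit the H\"older window $L_n \lesssim w_n^{1-\gamma}$, $\|\va_n\| \lesssim w_n^{-\gamma}$ with the optimal exponent $\gamma < \frac{1}{(1+a)a}$. The balance point is forced by $L_n/q_{n-1}$ versus $q_{n-1}$: the extra loss of a power $1/a$ when passing from $q_n$ to $q_{n-1}$, on top of the loss $1/(1+a)$ when comparing $q_n$ with $w_n$, is precisely what produces the exponent $(1+a)a$ in the denominator — and also explains the remark in the introduction that $\gamma < 1/2$ always (since $(1+a)a \geq 2$ for $a \geq 1$). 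Everything else is a routine application of Lemma~\ref{kf1}, Remark~\ref{lacunary}, and the estimates~(\ref{m1})–(\ref{normsup}).
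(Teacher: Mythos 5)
Your proposal is correct and follows essentially the same route as the paper: refine the construction of Section~\ref{secpodst} and verify the hypotheses of Lemma~\ref{kf1} for the lacunary sequence $w_n=q_nq_{n+1}$, with the Diophantine condition $q_{n+1}\leq Cq_n^a$ producing the exponent $\frac{1}{(1+a)a}$. The only (harmless) difference is the parameter choice: the paper calibrates $M_n=\frac{q_{n-1}}{2(q_nq_{n+1})^\gamma}$ so that $L(\va_n)=w_n^{1-\gamma}$ and $\|\va_n\|_{C(\T)}\leq w_n^{-\gamma}$ hold exactly (checking $M_n\to\infty$ from $\gamma<\frac{1}{(1+a)a}$), whereas you take $M_n=n$ and absorb the resulting slowly growing factor into $w_n^\varepsilon$, which works equally well.
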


\begin{proof}
Let $C$ be a positive constant such that
\begin{equation}\label{diof}
q_{n+1}\leq C\cdot q_n^{a}\text{ for all }n\geq 1.
\end{equation}
For any $0<\gamma<\frac1{(1+a)a}$ we set
$$
M_n= \frac{q_{n-1}}{2(q_nq_{n+1})^\gamma}.$$ Then we construct
$\va$ in the same manner as in Section~\ref{secpodst}. By
(\ref{diof}),
$$
\frac{q_{n-1}}{(q_nq_{n+1})^\gamma}\geq\frac{q_{n-1}}{C^{\gamma}q_{n-1}
^{\gamma a}C^{\gamma(1+a)}q_{n-1}^{\gamma
a^2}}=\frac{q_{n-1}^{1-\gamma(1+a)a}}{C^{\gamma(2+a)}},$$ and
therefore (\ref{m2}) holds. But $M_n/q_{n-1}=1/(2w_n^\gamma)$ and
$(w_n^\gamma)_{n=0}^\infty$ is lacunary, so
 also (\ref{m3}) is satisfied. Let
$\varphi_n(x)=f_n(x+\alpha)-f_n(x)$. Then
$$L(\va_n)=2L_n=2M_n\cdot\frac{q_nq_{n+1}}{q_{n-1}}=(q_nq_{n+1})^{1-\gamma}
=w_n^{1-\gamma}$$
and, by (\ref{normsup}) and (\ref{m4}),
$$
\|\va_n\|_{C(\T)}\leq\frac{L_n}{q_nq_{n+1}}=\frac{M_n}{q_{n-1}}\leq
\frac1{w_n^\gamma}.$$
 By Lemma~\ref{kf1} together with
(\ref{mian}), the function $\va$ is $\gamma$--H\"older continuous.
\end{proof}

\section{Function of class $\mbox{O}(\log |n|/|n|)$}\label{oduze}
Assume that $\alpha\in[0,1)$ is irrational and suppose that
$(M_m)_{m\geq 1}$ satisfies (\ref{m2}) and (\ref{m3}). Set
$\delta_m=\frac{q_{m-1}}{q_mq_{m+1}}$. Then $M_m=L_m\delta_m$. Let
us consider a piecewise linear function $f_m:[0,1)\to\R^+$ which
is $1/q_{m}$--periodic and
\[
f_m(x)=\left\{
\begin{array}{ccc}
L_mx& \text{if} & 0\leq x\leq \delta_m\\
M_m& \text{if}& \delta_m\leq x\leq\frac{1}{q_m}-\delta_m\\
L_m(\frac{1}{q_m}-x)& \text{if} &\frac{1}{q_m}-\delta_m\leq x\leq
\frac{1}{q_m}.
\end{array}\right.\]
Next, consider $\va(x)=\sum_{m=1}^\infty(f_m(x+\alpha)-f_m(x))$.
An analysis similar to that in Section~\ref{secpodst} shows that
$\va$ is correctly defined and it is continuous. Moreover,
$\va^{(k)}(0)\to+\infty$ as $|k|\to+\infty$. Let
$g_m:[0,1)\to\R^+$ be given by
\[g_m(x)=\left\{
\begin{array}{ccc}
x& \text{if} & 0\leq x\leq q_m\delta_m\\
q_m\delta_m& \text{if}& q_m\delta_m\leq x\leq 1-q_m\delta_m\\
1-x& \text{if} &1-q_m\delta_m\leq x\leq 1.
\end{array}\right.\] Therefore $f_m(x)=\frac{L_m}{q_m}g_m(q_mx)$
for $x\in[0,1/q_m)$ and for $n\neq 0$ we have
\begin{eqnarray*}\widehat{g}_m(n)&=&\int_0^1g_m(x)e^{-2\pi inx}\,dx=\frac1{2\pi
in}\int_0^1g_m'(x)e^{-2\pi inx}\,dx\\&=& \frac1{2\pi
in}\left(\int_0^{q_m\delta_m}e^{-2\pi inx}\,dx -
\int_{1-q_m\delta_m}^1e^{-2\pi inx}\,dx \right)\\&=&
\frac1{4\pi^2n^2}\left(e^{-2\pi inq_m\delta_m}+e^{2\pi
inq_m\delta_m}-2\right)=-\frac{1}{\pi^2n^2}\sin^2\pi nq_m\delta_m.
\end{eqnarray*}
Since $f_k$ is $1/q_k$--periodic,
\[\widehat{f}_k(n)=\sum_{j=0}^{q_k-1}e^{-2\pi in\frac{j}{q_k}}
\int_0^{1/q_k}f_k(x)e^{-2\pi inx}\,dx,\] whence
$\widehat{f}_k(n)=0$ if $q_k$ does not divide $n$ and moreover
\begin{eqnarray*}
\widehat{f}_k(q_ks)&=&q_k\int_0^{1/q_k}f_k(x)e^{-2\pi iq_ksx}\,dx=
q_k\int_0^{1/q_k}\frac{L_k}{q_k}g_k(q_kx)e^{-2\pi iq_kx}\,dx\\&=&
L_k\int_0^1g_k(y)e^{-2\pi
isy}\left(\frac1{q_k}\,dy\right)=\frac{L_k}{q_k}\widehat{g}_k(s)
\end{eqnarray*}
for each $s\in\Z$. It follows that
\begin{eqnarray*}
\widehat{\va}(n)&=&(e^{2\pi
in\al}-1)\sum_{k=1}^\infty\widehat{f}_k(n)= (e^{2\pi
in\al}-1)\sum_{k\geq 1
:q_k|n}\frac{L_k}{q_k}\widehat{g}_k(\frac{n}{q_k})\\&=& (e^{2\pi
in\al}-1)\sum_{k\geq 1:q_k|n}\frac{L_k}{q_k}\cdot\frac{-\sin^2\pi
n\delta_k}{\pi^2\frac{n^2}{q_k^2}}\\&=&\frac{1-e^{2\pi
in\al}}{\pi^2n^2}\sum_{k\geq 1:q_k|n}L_kq_k\sin^2\pi
n\delta_k.\end{eqnarray*} Thus
\begin{equation}\label{furie}
|\widehat{\va}(n)|=|\widehat{\va}(-n)|=\frac{2|\sin\pi
n\al|}{\pi^2n^2}\sum_{k\geq 1:q_k|n}L_kq_k\sin^2\pi
n\delta_k.
\end{equation}

\begin{Remark}Recall that
\begin{equation}\label{singora}
|\sin \pi x|\leq \pi\|x\|\leq \pi|x| \;\;\text{ for all
}\;\;x\in\R
\end{equation}
and
\begin{equation}\label{sindul}
\sin \pi x\geq 2x \;\;\text{ for all }\;\;x\in[0,1/2].
\end{equation}\end{Remark}

\begin{Lemma} Assume that $\alpha\in[0,1)$ is an irrational number
such that
\[\left(\frac{\log q_k}{kq^2_{k-1}}\right)_{k\geq 1}\text{ is increasing and }
\frac{\log q_k}{kq^2_{k-1}}\to+\infty\text{ as }k\to+\infty.\]Then
there exists  $(M_k)_{k\in\N}$ with $M_k\to+\infty$ such that for
$\va$ above we have $\widehat{\varphi}(n)=\mbox{\em O}(\log
|n|/|n|)$.
\end{Lemma}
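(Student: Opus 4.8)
The plan is to fix a slowly growing choice of the parameters, say $M_k=k$. This satisfies~(\ref{m2}); and since $(q_k)$ grows geometrically (in fact super-exponentially under our hypothesis) it also satisfies $\sum_kM_k/q_{k-1}<+\infty$, i.e.~(\ref{m3}), so that $\va=\sum_k\va_k$ with $\va_k:=f_k(\,\cdot\,+\al)-f_k$ is a well-defined continuous function with the stated properties. One then estimates $\widehat{\va_k}(n)=(e^{2\pi in\al}-1)\widehat f_k(n)$ separately and adds up. Recall $\widehat f_k(n)=0$ unless $q_k\mid n$, and that the formula for $\widehat f_k$ computed above gives, for $q_k\mid n$, both $|\widehat f_k(n)|\le M_kq_k^2q_{k+1}/(\pi^2q_{k-1}n^2)$ (from $\sin^2\le1$) and $|\widehat f_k(n)|\le M_kq_{k-1}/q_{k+1}$ (from $\sin^2x\le x^2$; here the $n$'s cancel). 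I would also use the elementary fact that $q_k\mid n$, $n=q_ks$, forces $\|n\al\|\le s\|q_k\al\|<s/q_{k+1}=n/(q_kq_{k+1})$, so that $|e^{2\pi in\al}-1|\le 2\min\bigl(1,\pi n/(q_kq_{k+1})\bigr)$. Writing $v_k:=n/(q_kq_{k+1})$ and $A_k:=q_kq_{k+1}/(\pi q_{k-1})$ and multiplying, one gets a ``bump'' estimate
\[
|\widehat{\va_k}(n)|\le\frac{2M_k}{q_{k+1}}\,\min(1,\pi v_k)\,\min\!\Bigl(q_{k-1},\tfrac1{\pi^2v_k^2q_{k-1}}\Bigr),
\]
which is always $\le 2M_k/q_{k+1}$, is $\le 2M_kq_k/(\pi nq_{k-1})$ as soon as $n\ge A_k$, and equals $(2M_k/q_{k+1})\cdot\pi nq_{k-1}/(q_kq_{k+1})\le 2M_k/q_{k+1}$ when $n<A_k$; its peak, of order $M_k/q_{k+1}$, sits at $n\approx A_k$.

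Now fix a large $n$ with $\widehat\va(n)\ne0$ (so $q_j\mid n$ for some $j\ge1$), and let $k_0=k_0(n)$ be the largest $k$ with $A_k\le n$. Split $\widehat\va(n)=\sum_{k\le k_0}\widehat{\va_k}(n)+\sum_{k>k_0}\widehat{\va_k}(n)$. In the first block each term is $\le 2M_kq_k/(\pi nq_{k-1})$; since $\log q_{k+1}/\bigl((k+1)q_k^2\bigr)\to+\infty$ by hypothesis, $q_{k+1}>a_{k+1}q_k$ with $a_{k+1}$ enormous, so the numbers $M_kq_k/q_{k-1}$ grow at least geometrically and the block collapses to $\lesssim M_{k_0}q_{k_0}/(nq_{k_0-1})$. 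As $n\ge A_{k_0}\ge q_{k_0+1}/\pi$ one has $\log n\gtrsim\log q_{k_0+1}\gtrsim(k_0+1)q_{k_0}^2$, which dominates $M_{k_0}q_{k_0}/q_{k_0-1}$ $(\le M_{k_0}q_{k_0})$, so the first block is $\lesssim(\log n)/n$. In the second block each term is $\le 2M_k/q_{k+1}$ and $n<A_k$; the crucial inequality is $2M_k/q_{k+1}\le C(\log A_k)/A_k$ for every $k$, again an instance of the hypothesis (as $\log A_k\gtrsim\log q_{k+1}\gtrsim(k+1)q_k^2$ beats $M_kq_k/q_{k-1}$). Since $A_k$ grows super-exponentially, $(\log A_k)/A_k$ decays geometrically, so the block is $\lesssim(\log A_{k_0+1})/A_{k_0+1}$; and since $A_{k_0+1}>n>e$ and $x\mapsto(\log x)/x$ is decreasing, this is $<(\log n)/n$. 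Adding the two blocks gives $|\widehat\va(n)|\lesssim(\log n)/n$; the finitely many remaining small $n$ are harmless ($\widehat\va(n)=0$ unless some $q_j\mid n$, and otherwise $\widehat\va(n)$ is one of finitely many bounded values, absorbed in the implied constant).

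The main obstacle is this double collapse of the two blocks to their critical indices ($k=k_0$, resp.\ $k=k_0+1$) and the matching against $(\log n)/n$ --- and it is precisely here that the Diophantine condition is used with no room to spare: the extremal $n$ are those with $n\approx A_k$, where $|\widehat\va(n)|\approx M_k/q_{k+1}$ while $n\approx q_kq_{k+1}/q_{k-1}$, so that $|\widehat\va(n)|\cdot n\approx M_kq_k/q_{k-1}\approx M_ka_k$, and one needs to be able to keep this $\lesssim\log n\approx\log q_{k+1}$ while still having $M_k\to+\infty$, which is exactly what $\log q_{k+1}/\bigl((k+1)q_k^2\bigr)\to+\infty$ allows. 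The monotonicity half of the hypothesis serves only to make the various ``eventually'' comparisons uniform in $k\ge1$.
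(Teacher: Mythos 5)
Your argument is correct, and it reaches the conclusion by a noticeably different bookkeeping than the paper's, although both rest on the same explicit formula~(\ref{furie}) and the same elementary bounds $|\sin\pi n\al|\le\min(1,\pi\|n\al\|)$, $\|n\al\|\le n/(q_kq_{k+1})$ for $q_k\mid n$, and $\sin^2\pi n\delta_k\le\min(1,(\pi n\delta_k)^2)$. The paper takes $M_k=\min\bigl(\log q_k/(kq_{k-1}^2),\,q_{k-1}^\vep\bigr)$ (the cap $q_{k-1}^\vep$ securing~(\ref{m3}), the other term being essentially the largest growth the method tolerates), uses monotonicity of $M_k$, $L_k$, $\delta_k$, singles out the largest $m$ with $q_m\mid n$, splits according to $n\ge q_{m+1}$ or $n\le q_{m+1}$, and bounds the divisor sum crudely by $m$ times its largest term, the hypothesis entering through $mq_m^2M_m\le\log q_{m+1}$. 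You instead take the much slower $M_k=k$ (perfectly admissible, since the lemma only demands $M_k\to+\infty$ and~(\ref{m2}),~(\ref{m3}) clearly hold), derive a per-frequency envelope peaking at $A_k=q_kq_{k+1}/(\pi q_{k-1})$, split the sum at the index $k_0$ where $n$ crosses the peaks, and collapse each block geometrically; the hypothesis enters through $M_kq_k/q_{k-1}\lesssim(k+1)q_k^2\lesssim\log q_{k+1}\lesssim\log A_k$. What your route buys is a cleaner separation between the "envelope" estimate and the Diophantine input, at the cost of needing the geometric-collapse comparisons; what the paper's route buys is that it squeezes out the fastest admissible $M_k$ and avoids any discussion of how $A_k$ or $M_kq_k/q_{k-1}$ grow. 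Two small points of hygiene: your claim that monotonicity makes all comparisons uniform in $k\ge1$ is not literally true for the first few indices (e.g.\ when $q_1=1$ the hypothesis says nothing at $k=1$), but this is harmless, since any fixed finite range of $k$ contributes $O(1/n)$ and finitely many constants can be absorbed; and your first-block bound $|\widehat{\va_k}(n)|\le 2M_kq_k/(\pi nq_{k-1})$, stated for $n\ge A_k$, is in fact valid for all $n$ with $q_k\mid n$ (for $n<A_k$ the rising bound $(2M_k/q_{k+1})\pi nq_{k-1}/(q_kq_{k+1})$ is smaller than it), so no monotonicity of $(A_k)$ is needed there at all.
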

\begin{proof}Fix $0<\vep<1$ and let
\[M_k=\min\left(\frac{\log q_k}{kq^2_{k-1}},q_{k-1}^\vep\right).\]
Then $(M_k)_{k\geq 1}$ is increasing and $M_k\to+\infty$. Next
note that the sequence $(\delta_k)_{k=1}^{\infty}$ is decreasing.
Indeed, since $q^2_{k}<q^2_{k+1}<\log q_{k+2}<q_{k+2}$, it follows
that
\[\delta_{k}=\frac{q_{k-1}}{q_{k}q_{k+1}}>\frac{q_{k-1}q_k}{q_{k+1}q_{k+2}}\geq
\frac{q_k}{q_{k+1}q_{k+2}}=\delta_{k+1}.\]
Since $L_k=M_k/\delta_k$, we see that $(L_k)_{k=1}^{\infty}$ is
increasing. Fix $n>0$ and let $m\geq 0$ be the largest number such
that $q_m$ divides $n$. Note that if $m=0$, i.e.\ $q_k$ does not
divide $n$ for all $k\geq 1$, then $\widehat{\va}(n)=0$, so assume
that $m\geq 1$.

First suppose that $n\geq q_{m+1}$. Since
$mq^2_{m}M_{m}\leq(m+1)q^2_{m}M_{m+1}\leq \log q_{m+1}$, by
(\ref{furie}),
\begin{eqnarray*}|\widehat{\va}(n)|&\leq&\frac{2}{\pi^2n^2}
\sum_{k\geq 1:q_k|n}L_kq_k\leq
\frac{2mL_mq_m}{\pi^2n^2}\\&\leq&
\frac{2mL_mq_m}{q_{m+1}}\frac{1}{n}=
\frac{2mM_mq^2_m}{q_{m-1}}\frac{1}{n}\leq \frac{\log
q_{m+1}}{n}\leq\frac{\log n}{n}.
\end{eqnarray*}
Next, suppose that $n\leq q_{m+1}$. Since $n=sq_{m}$, by
(\ref{furie}), (\ref{singora}) and (\ref{mip}), we have
\begin{eqnarray*}
|\widehat{\va}(n)|&\leq&\frac{2\|sq_m\al\|}{\pi^2n^2}\sum_{k\geq 1
:q_k|n}L_kq_k\sin^2\pi
n\delta_k\\&\leq&\frac{2s}{\pi^2q_{m+1}n^2}\left(L_mq_m(\pi
n\delta_m)^2+\sum_{k=1}^{m-1}L_kq_k\right).
\end{eqnarray*} Moreover,
\[
\frac{sL_mq_m(\pi n\delta_m)^2}{\pi^2q_{m+1}n^2}=
\frac{sq_m}{q_{m+1}}L_m\delta_{m}^2
=\frac{n}{q_{m+1}}M_m\frac{q_{m-1}}{q_mq_{m+1}}\leq
\frac{M_mq_{m-1}}{q_mq_{m+1}}\leq \frac{1}{q_{m+1}}\leq\frac{1}{n}
\]
and
\begin{eqnarray*} \frac{s}{\pi^2q_{m+1}n^2}\sum_{k=1}^{m-1}L_kq_k&\leq&
\frac{mL_{m-1}q_{m-1}s}{q_{m+1}n^2}=\frac{mM_{m-1}q^2_{m-1}q_ms}{q_{m-2}q_{m+1}n^2}
\\&=& \frac{mM_{m-1}q^2_{m-1}}{q_{m-2}q_{m+1}n}
\leq \frac{q_{m}}{q_{m-2}q_{m+1}n}\leq\frac{1}{n}.
\end{eqnarray*}
Consequently, $\widehat{\va}(n)=\mbox{O}(\log |n|/|n|)$.
\end{proof}
In this way we have proved the following.
\begin{Th}
There exist an irrational rotation $Tx=x+\alpha$ and a continuous
function $\varphi:\T\to\R$  such that the cylindrical
transformation $T_\varphi$ is Besicovitch  and
$\widehat{\varphi}(n)= \mbox{\em O}(\log |n|/|n|)$.
\end{Th}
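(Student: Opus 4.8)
The plan is to combine the two ingredients already assembled in Section~\ref{oduze}. First, the trapezoidal construction there (the $1/q_m$--periodic functions $f_m$ with a flat top of height $M_m$, and $\va(x)=\sum_m(f_m(x+\alpha)-f_m(x))$) produces, for any admissible choice of $(M_k)$, a continuous zero-mean function with $\va^{(k)}(0)\to+\infty$ as $|k|\to+\infty$; by Proposition~\ref{dych} such a $\va$ is neither a coboundary nor transient, so $T_\va$ is topologically transitive while the orbit of $(0,0)$ is discrete, i.e.\ $T_\va$ is Besicovitch. Second, the Lemma preceding the statement guarantees the Fourier estimate $\widehat{\va}(n)=\mbox{O}(\log|n|/|n|)$ for this $\va$ once the denominators $(q_k)$ of $\alpha$ satisfy
\[
\left(\frac{\log q_k}{k\,q_{k-1}^2}\right)_{k\geq 1}\ \text{is increasing and tends to}\ +\infty .
\]
Hence the entire theorem reduces to exhibiting a single irrational $\alpha$ whose convergent denominators obey this condition; the rest is quoted from Section~\ref{oduze} verbatim.

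To build such an $\alpha$ I would prescribe its partial quotients inductively, equivalently the denominators $q_0=1$, $q_1=a_1$, $q_{k+1}=a_{k+1}q_k+q_{k-1}$, choosing at stage $k$ the integer $a_{k+1}$ so large that
\[
\log q_{k+1}\ \geq\ (k+1)\,q_k^2\cdot\max\!\left(k,\ \frac{\log q_k}{k\,q_{k-1}^2}\right).
\]
This single inequality forces both the monotonicity $\log q_{k+1}/((k+1)q_k^2)\geq \log q_k/(kq_{k-1}^2)$ and the divergence $\log q_{k+1}/((k+1)q_k^2)\geq k\to+\infty$; moreover it gives $\log q_{k+1}>q_k^2$, which is precisely the elementary growth inequality $q_k^2<q_{k+1}^2<\log q_{k+2}$ invoked in the proof of the Lemma (to show $(\delta_k)$ is decreasing). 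Since $q_{k+1}\geq a_{k+1}q_k$ we have $\log q_{k+1}\geq \log a_{k+1}$, so at each step a sufficiently large $a_{k+1}$ does the job; the resulting $\alpha=[0;a_1,a_2,\dots]$ is irrational because all $a_k\geq 1$.

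With this $\alpha$ fixed, the Lemma furnishes a sequence $(M_k)$ with $M_k\to+\infty$ satisfying~(\ref{m2}) and~(\ref{m3}), and the associated $\va$ from Section~\ref{oduze} is continuous with zero mean, has $\va^{(k)}(0)\to+\infty$, and satisfies $\widehat{\va}(n)=\mbox{O}(\log|n|/|n|)$; as explained above $T_\va$ is then Besicovitch, which is the claim. There is no real analytic obstacle here: the only point requiring a moment's care is that the monotonicity of $\log q_k/(kq_{k-1}^2)$ must hold for \emph{every} $k\geq 1$, including the small indices where $q_0=1$, but this is built into the recursive choice, each new term being made to dominate both its predecessor and $k$.
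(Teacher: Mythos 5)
Your proposal is correct and takes essentially the same route as the paper: the trapezoidal construction of Section~\ref{oduze} together with the Lemma preceding the statement, with transitivity coming from Proposition~\ref{dych} and the discrete orbit of $(0,0)$ from $\va^{(k)}(0)\to+\infty$. The only addition is your explicit inductive choice of partial quotients producing an $\alpha$ satisfying the Lemma's hypothesis (a point the paper leaves implicit), and that step is sound since $q_{k+1}\geq a_{k+1}$ can be made arbitrarily large at each stage.
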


\begin{Remark}
Note that this is not true that
$\widehat{\varphi}(n)=\mbox{O}(1/|n|)$. Indeed, take $m\geq 1$ and
choose $s\in\N$ such that
$q_{m+1}/(4q_{m-1})<s<q_{m+1}/(2q_{m-1})$. Set  $n=sq_m$.  Since
\[\|n\alpha\|=\|sq_m\alpha\|\leq s\|q_m\alpha\|<1/2\text{ and }
n\delta_m= s\frac{q_{m-1}}{q_{m+1}}<1/2,\] by (\ref{sindul}) and
(\ref{mip}), it follows that
\[|\sin\pi n\al|\geq
2\|sq_m\alpha\|=2s\|q_m\alpha\|>\frac{1}{4q_{m-1}},\]\[ |\sin\pi
n\delta_m|\geq 2n\delta_m=2s\frac{q_{m-1}}{q_{m+1}}>1/2.\]
Therefore, from (\ref{furie}),
\begin{eqnarray*}| \widehat{\va}(n)|&\geq& \frac{2|\sin\pi
n\al|}{\pi^2n^2}L_mq_m\sin^2\pi n\delta_m\geq
\frac{1}{4\pi^2n^2}\frac{L_mq_m}{q_{m-1}}=\frac{1}{4\pi^2n^2}
\frac{M_mq_{m+1}q^2_m}{q^2_{m-1}}
\\&=&\frac{1}{4\pi^2n}\frac{M_mq_{m+1}q_m}{sq^2_{m-1}}>
\frac{1}{2\pi^2n}\frac{M_mq_m}{q_{m-1}}>\frac{M_m}{2\pi^2n}.
\end{eqnarray*}
Consequently,
\[|n \widehat{\va}(n)|\geq \frac{M_m}{2\pi^2}\to+\infty.\]
\end{Remark}

\section{Variants of the construction}\label{varia}
Fix an irrational $\al\in[0,1)$ and let $(q_{k_n})$ be a
subsequence of the sequence of denominators of $\al$ such that all
$k_n$ are even (or all are odd). Let $(M_n)$ be a sequence of
natural numbers such that
\begin{equation}\label{m2v}M_n\to+\infty\end{equation} and
\begin{equation}\label{m3v}\sum_{n=1}^\infty\frac{M_n}{q_{k_{n-1}}}
<+\infty.\end{equation}
Set
\begin{equation}\label{m4v}L_n=M_n\cdot\frac{q_{k_n}q_{k_n+1}}{q_{k_{n-1}}}\text{
and  }\delta_n=\frac{q_{k_{n-1}}}{q_{k_n}q_{k_n+1}}.\end{equation}
In view of~(\ref{m3v}) and~(\ref{m4v}),
\begin{equation}\label{m5v}\sum_{n=1}^\infty
\frac{L_n}{q_{k_n}q_{k_n+1}}<+\infty.\end{equation}

We now define $f_n:[0,1)\to\R^+$ a piecewise linear Lipschitz
continuous function which $1/q_{k_n}$--periodic and
\[
f_n(x)=\left\{
\begin{array}{ccc}
0& \text{if} & 0\leq x\leq \frac{\delta_n}{8}\\
L_n\left(x-\frac{\delta_n}{8}\right)& \text{if} &
\frac{\delta_n}{8}\leq x\leq \frac{\delta_n}{2}+\frac{\delta_n}{8}\\
\frac{M_n}{2}& \text{if} &
\frac{\delta_n}{2}+\frac{\delta_n}{8}\leq
x\leq \frac{1}{4q_{k_n}}-\frac{\delta_n}{2}\\
L_n\left(x-\frac{1}{4q_{k_n}}\right)+{M_n}& \text{if} &
\frac{1}{4q_{k_n}}-\frac{\delta_n}{2}\leq x\leq
\frac{1}{4q_{k_n}}+\frac{\delta_n}{2}\\
\frac{3M_n}{2}& \text{if} &
\frac{1}{4q_{k_n}}+\frac{\delta_n}{2}\leq
x\leq \frac{1}{2q_{k_n}}-\frac{\delta_n}{2}-\frac{\delta_n}{8}\\
L_n\left(x-\frac{1}{2q_{k_n}}+\frac{\delta_n}{8}\right)+2M_n&
\text{if} &
\frac{1}{2q_{k_n}}-\frac{\delta_n}{2}-\frac{\delta_n}{8}\leq
x\leq \frac{1}{2q_{k_n}}-\frac{\delta_n}{8}\\
2M_n& \text{if} & \frac{1}{2q_{k_n}}-\frac{\delta_n}{8}\leq x\leq
\frac{1}{2q_{k_n}},
\end{array}\right.\]
and such that  $f_n\left({1}/{q_{k_n}}-x\right)=f_n(x)$ for all
$x\in[1/2q_{k_n},1/q_{k_n}]$.

\begin{figure}[h]
\begin{center}
\vspace{7cm}
 \includegraphics{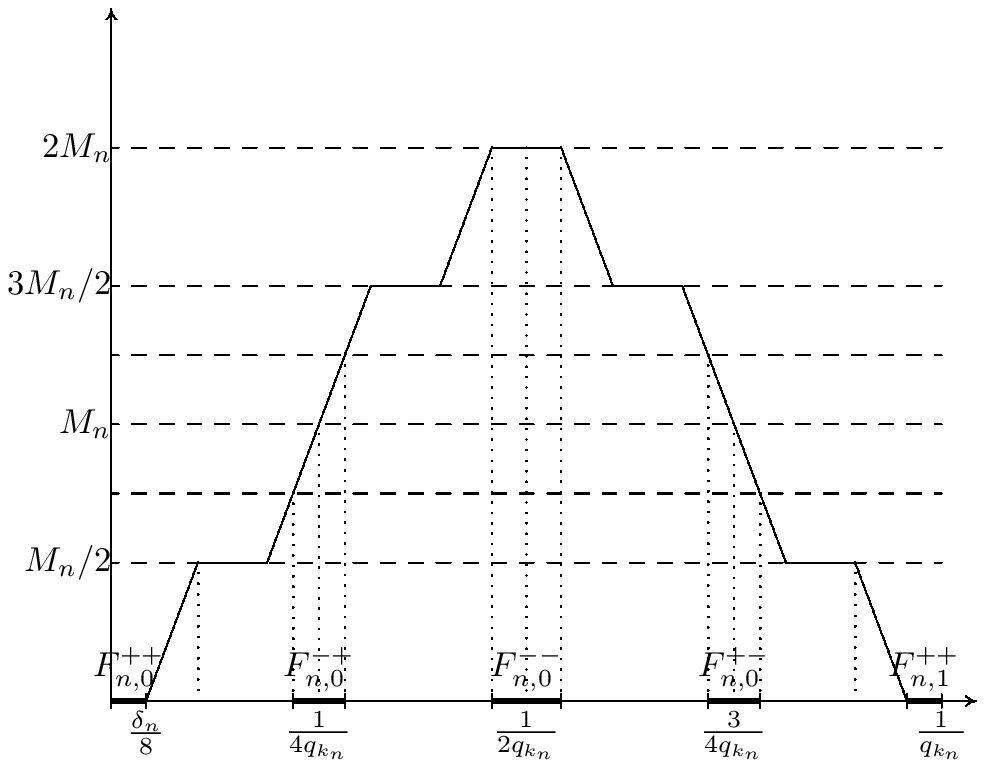} \caption{The graph of $f_n$}
\end{center}
\end{figure}

The Lipschitz constant of $f_n$ is $L_n$, $f_n$ takes only
nonnegative values, $f_n(0)=0$ and $\|f_n\|_{C(\T)}\leq 2M_n$.
Moreover $f_n(-x)=f_n(x)$ for all $x\in[0,1)$. Using
$1/q_{k_n}$--periodicity of $f_n$ and~(\ref{m1}) for each
$x\in[0,1)$ we have
\begin{eqnarray*}
|f_n(x+\al)-f_n(x)|&=&\left|f_n(x+\al)-f_n\left(
x+\frac{p_{k_n}}{q_{k_n}}\right)\right|\\ &\leq&
L_n\left|\al-\frac{p_{k_n}}{q_{k_n}}\right|<L_n\cdot\frac1{q_{k_n}q_{k_n+1}},
\end{eqnarray*} so
$\|f_n(\,\cdot\,+\alpha)-f_n(\,\cdot\,)\|_{C(\T)}<\frac{L_n}{q_{k_n}q_{k_n+1}}$
and it follows from~(\ref{m5v}) that the series
$$
\va(x)=\sum_{n=1}^\infty(f_n(x+\alpha)-f_n(x))$$ converges
uniformly, so $\va$ is continuous and clearly
$\int_0^1\va(x)\,dx=0$. For each integer $k$ we  have
\begin{equation}\label{wzkocykl}
\va^{(k)}(x)=\sum_{n=1}^\infty (f_n(x+k\al)-f_n(x)).
\end{equation}

\begin{Remark} Suppose additionally that
\begin{equation}\label{jednakzal+}
q_{k_{n+1}}>16q_{k_n}q_{k_n+1}/q_{k_{n-1}}\text{ and
}q_{k_n-1}\geq 4q_{k_{n-1}}\text{ for all }n\in\N.
\end{equation}
It follows that
\begin{equation}\label{jednakzal++}
\frac{1}{q_{k_{n}}}-\frac{1}{q_{k_{n}+1}}=
\frac{q_{k_{n}+1}-q_{k_{n}}}{q_{k_{n}}q_{k_{n}+1}}\geq
\frac{q_{k_{n}-1}}{q_{k_{n}}q_{k_{n}+1}}
\geq\frac{4q_{k_{n-1}}}{4q_{k_{n}}q_{k_{n}+1}}=4\delta_n.\end{equation}

Set
\[F^{++}_{n,j}=\left[-\frac{\delta_n}{8},\frac{\delta_n}{8}\right]+\frac{j}{q_{k_n}}\]
and let $F^{++}=\bigcap_{n=1}^\infty
\bigcup_{j=0}^{q_{k_n}-1}F^{++}_{n,j}$. In view of
(\ref{jednakzal+}),
\[|F^{++}_{n,j}|=\frac{\delta_n}{4}=
\frac{q_{k_{n-1}}}{4q_{k_n}q_{k_n+1}}>\frac{4}{q_{k_{n+1}}},\] and
hence there exist at least two intervals of the form
$F^{++}_{n+1,l}$ which are included in $F^{++}_{n,j}$.
Consequently the set $F^{++}$ is uncountable. We will show that if
$x\in F^{++}$ then $\varphi^{(n)}(x)\to+\infty$ as
$|n|\to+\infty$.

Assume that $x\in F^{++}$. Then there exists a sequence
$(j_l)_{l=1}^{\infty}$ of natural numbers such that $x\in
F^{++}_{l,j_l}$ for all $l\in\N$. Let $x=\frac{j_l}{q_{k_l}}+x_l$,
where $|x_l|\leq\delta_l/8$. Then
\[f_l\left(x+m\alpha\right)-f_l\left(x\right)=f_l
\left(x_l+m\alpha\right)-f_l\left(x_l\right).\]

Fix integer $m\neq 0$ and assume that $q_{k_{n-1}}\leq
|m|<q_{k_n}$. Since $|x_l|\leq\delta_l/8$  and $f_l\geq 0$ for
every $l\in\N$, by the definition of $f_l$, $f_l(x_l)=0$, so
\begin{equation}\label{dwapt}
f_l\left(x+m\alpha\right)-f_l\left(x\right)=f_l\left(x_l+m\alpha\right)-
f_l\left(x_l\right)=f_l\left(x_l+m\alpha\right)\geq
0.\end{equation} Since $|m|<q_{k_n}$, in view of (\ref{m1}) and
(\ref{jednakzal++}), we have
\begin{eqnarray*}\left|x_n+m\al-\frac{mp_{k_n}}{q_{k_n}}\right|&\leq&
|x_n|+|m|\left|\al-\frac{p_{k_n}}{q_{k_n}}\right|\leq|x_n|+\frac{|m|}
{q_{k_n}q_{k_{n}+1}}\\
&\leq&\frac{\delta_n}{8}+\frac{1}{q_{k_{n}+1}}<\frac{1}{q_{k_{n}}}-2{\delta_n}.
\end{eqnarray*}
Since $|m|\geq q_{k_{n-1}}$, in view of (\ref{m1}), we have
\[\left|x_n+m\al-\frac{mp_{k_n}}{q_{k_n}}\right|\geq
|m|\left|\al-\frac{p_{k_n}}{q_{k_n}}\right|-|x_n|\geq
\frac{q_{k_{n-1}}}{2q_{k_n}q_{k_n+1}}-\frac{\delta_n}{8}=\frac{\delta_n}{2}
-\frac{\delta_n}{8}=\frac{\delta_n}{8}+\frac{\delta_n}{4}.\] By
the definition of $f_n$ it follows that
\[f_n\left(\left|x_n+m\al-\frac{mp_{k_n}}{q_{k_n}}\right|\right)
\geq f_n\left(\frac{\delta_n}{8}+\frac{\delta_n}{4}\right)=
L_n\frac{\delta_n}{4}=M_n/4.\]
Therefore, using additionally (\ref{dwapt}), we obtain
\begin{eqnarray*}f_n\left(x+m\alpha\right)-f_n\left(x\right)
&=&f_n\left(x_n+m\alpha\right)=
f_n\left(x_n+m\al-\frac{mp_{k_n}}{q_{k_n}}\right)\\
&=&f_n\left(\left|x_n+m\al-\frac{mp_{k_n}}{q_{k_n}}\right|\right)\geq
M_n/4.
\end{eqnarray*}
Consequently, using (\ref{wzkocykl}) and (\ref{dwapt}) again,
\[\va^{(m)}(x)=\sum_{l=1}^\infty(f_l\left(x+m\alpha\right)-
f_l\left(x\right))\geq f_n\left(x+m\alpha\right)-f_n\left(x\right)\geq\frac{M_n}{4}.\]
\end{Remark}
\begin{Remark} Suppose additionally that
\begin{equation}\label{jednakzal} M_{n+1}\geq 33M_n\text{ for all }n\in\N.
\end{equation}
We will prove that the set of all $x\in[0,1)$ for which
$\varphi^{(m)}(x)\to+\infty$ as $m\to+\infty$ and simultaneously
$\varphi^{(m)}(x)\to-\infty$ as $m\to-\infty$ is uncountable.

Set
\[F^{-+}_{n,j}=\left[-\frac{\delta_n}{4},\frac{\delta_n}{4}\right]+
\frac{1}{4q_{k_n}}+\frac{j}{q_{k_n}}\]
and let $F^{-+}=\bigcap_{n=1}^\infty
\bigcup_{j=0}^{q_{k_n}-1}F^{-+}_{n,j}$. In view of
(\ref{jednakzal+}),
\[|F^{-+}_{n,j}|=\frac{\delta_n}{2}=\frac{q_{k_{n-1}}}{2q_{k_n}q_{k_n+1}}>
\frac{4}{q_{k_{n+1}}},\]
and hence there exist at least two intervals of the form
$F^{-+}_{n+1,l}$ which are included in $F^{-+}_{n,j}$.
Consequently the set $F^{-+}$ is uncountable. We will prove that
if $x\in F^{-+}$ then $\varphi^{(m)}(x)\to+\infty$ as
$m\to+\infty$ and $\varphi^{(m)}(x)\to-\infty$ as $m\to-\infty$.

Assume that $x\in F^{-+}$. Then there exists a sequence
$(j_l)_{l=1}^{\infty}$ for natural numbers such that $x\in
F^{-+}_{l,j_l}$ for all $l\in\N$. Let
$x=\frac{j_l}{q_{k_l}}+\frac{1}{4q_{k_l}}+x_l$, where
$|x_l|\leq\delta_l/4$. Then
\begin{equation}\label{trzypt}
f_l\left(x+m\alpha\right)-f_l\left(x\right)=f_l\left(\frac{1}
{4q_{k_l}}+x_l+m\alpha\right)-
f_l\left(\frac{1}{4q_{k_l}}+x_l\right).
\end{equation}
For every $l\geq 1$ we have
\begin{equation*}
\frac{1}{4q_{k_l}}-\frac{\delta_l}{4}<\frac{1}{4q_{k_l}}+x_l<
\frac{1}{4q_{k_l}}+\frac{\delta_l}{4},
\end{equation*}
and hence
\begin{equation}\label{locflx}
f_l\left(\frac{1}{4q_{k_l}}+x_l\right)=L_lx_l+M_l.
\end{equation}

Fix $m>0$ and assume that $q_{k_{n-1}}/4\leq m<q_{k_n}/4$. Since
$k_l$ is even, for every $l\geq 1$ we have
\[\frac{1}{4q_{k_l}}-\frac{\delta_l}{4}<\frac{1}{4q_{k_l}}+x_l\leq
\frac{1}{4q_{k_l}}+x_l+m\al-\frac{mp_{k_l}}{q_{k_l}}.\]
If additionally $l\geq n$ then, by (\ref{m1}),
\[\frac{1}{4q_{k_l}}+x_l+m\al-\frac{mp_{k_l}}{q_{k_l}}<
\frac{1}{4q_{k_l}}+\frac{\delta_l}{4}+\frac{q_{k_n}}{4q_{k_l}q_{k_{l}+1}}<
\frac{1}{4q_{k_l}}+\frac{\delta_l}{4}+\frac{1}{4q_{k_{l}+1}}.\] In
view of (\ref{jednakzal++}), $\frac{1}{q_{k_l+1}}\leq
\frac{1}{q_{k_l}}-4\delta_l$.
 It
follows that
\[\frac{1}{4q_{k_l}}+x_l+m\al-\frac{mp_{k_l}}{q_{k_l}}<
\frac{1}{2q_{k_l}}-\frac{3\delta_l}{4},\]
hence
\[\frac{1}{4q_{k_l}}+x_l+m\al-\frac{mp_{k_l}}{q_{k_l}}\in\left({1}/
{(4q_{k_l})}-{\delta_l}/{4},{1}/{(2q_{k_l})}-{3\delta_l}/{4}\right).\]

{\em Case 1.} Suppose that $l\geq n$ and
$$\frac{1}{4q_{k_l}}+x_l+m\al-\frac{mp_{k_l}}{q_{k_l}}\in\left({1}/{(4q_{k_l})}-
{\delta_l}/{4},{1}/{(4q_{k_l})}+{\delta_l}/{2}\right].$$ By the
definition of $f_l$,
\begin{eqnarray*}f_l\left(\frac{1}{4q_{k_l}}+x_l+m\alpha\right)&=&
f_l\left(\frac{1}{4q_{k_l}}+x_l+m\al-\frac{mp_{k_l}}{q_{k_l}}\right)
\\&=&L_l\left(x_l+m\al-\frac{mp_{k_l}}{q_{k_l}}\right)+M_l.
\end{eqnarray*}
Hence, by (\ref{trzypt}), (\ref{locflx}) and (\ref{m1}),
\begin{eqnarray*}\lefteqn{f_l\left(x+m\alpha\right)-f_l\left(x\right)
=f_l\left(\frac{1}{4q_{k_l}}+x_l+m\alpha\right)-f_l\left(\frac{1}{4q_{k_l}}+
x_l\right)}\\
&=&L_l\left(m\al-\frac{mp_{k_l}}{q_{k_l}}\right) \geq
L_l\frac{q_{k_{n-1}}}{8q_{k_l}q_{k_l+1}}=M_l\frac{q_{k_{n-1}}}
{8q_{k_{l-1}}}.\;\;\;\;\;\;\;\;\;\;\;\;\;\;\;\;\;\;\;\;\;
\end{eqnarray*}

{\em Case 2.} Suppose that $l\geq n$ and
$$\frac{1}{4q_{k_l}}+x_l+m\al-\frac{mp_{k_l}}{q_{k_l}}\in
\left({1}/{(4q_{k_l})}+{\delta_l}/{2},{1}/{(2q_{k_l})}-{3\delta_l}/{4}\right).$$
By the definition of $f_l$,
\[f_l\left(\frac{1}{4q_{k_l}}+x_l+m\alpha\right)
=f_l\left(\frac{1}{4q_{k_l}}+x_l+m\al-\frac{mp_{k_l}}{q_{k_l}}\right)=\frac{3}{2}M_l.\]
Moreover, by (\ref{locflx}),
\[f_l\left(\frac{1}{4q_{k_l}}+x_l\right)=L_lx_l+M_l\leq L_l
\frac{\delta_l}{4}+M_l=\frac{5}{4}M_l.\]
Therefore, using (\ref{trzypt}),
\[f_l\left(x+m\alpha\right)-f_l\left(x\right)
=f_l\left(\frac{1}{4q_{k_l}}+x_l+m\alpha\right)-f_l\left(\frac{1}
{4q_{k_l}}+x_l\right)\geq\frac{1}{4}M_l.\]

In summary, we have
$f_n\left(x+m\alpha\right)-f_n\left(x\right)\geq M_n/8$ and
$f_l\left(x+m\alpha\right)-f_l\left(x\right)>0$ for $l>n$.
Moreover, $|f_l\left(x+m\alpha\right)-f_l\left(x\right)|\leq 2M_l$
for all $l\geq 1$, in particular for $l<n$. By (\ref{wzkocykl})
and (\ref{jednakzal}), it follows that
\[\va^{(m)}(x)=\sum_{l=1}^\infty(f_l\left(x+m\alpha\right)-f_l\left(x\right))
\geq
\frac{M_n}{8}-\sum_{l=1}^{n-1}2M_l\geq\frac{M_n}{8}-2\frac{M_n}{32}\geq\frac{M_n}{16}\]
whenever  $q_{k_{n-1}}/4\leq m<q_{k_n}/4$. Consequently,
$\va^{(m)}(x)\to+\infty$ as $m\to+\infty$.

Similar argument will show also that $\va^{(-m)}(x)\leq -M_n/16$
whenever $x\in F^{-+}$ and $q_{k_{n-1}}/4\leq -m<q_{k_n}/4$. Since
$k_l$ is even, $|x_l|\leq\delta_l/8$ and $m$ is negative, for
every $l\geq 1$ we have
\[\frac{1}{4q_{k_l}}+x_l+m\left(\al-\frac{p_{k_l}}{q_{k_l}}\right)\leq
\frac{1}{4q_{k_l}}+x_l<\frac{1}{4q_{k_l}}+\frac{\delta_l}{4}.\]
Suppose additionally that $l\geq n$. Since $m>-q_{k_n}/4$, by
(\ref{m1}) and (\ref{jednakzal++}),
\begin{eqnarray*}\frac{1}{4q_{k_l}}+x_l+m\left(\al-\frac{p_{k_l}}{q_{k_l}}\right)&>&
\frac{1}{4q_{k_l}}-\frac{\delta_l}{4}+\frac{m}{q_{k_l}q_{k_{l}+1}}>
\frac{1}{4q_{k_l}}-\frac{\delta_l}{4}-\frac{q_{k_n}}{4q_{k_l}q_{k_{l}+1}}\\&>&
\frac{1}{4q_{k_l}}-\frac{1}{4q_{k_{l}+1}}-\frac{\delta_l}{4}>\frac{3\delta_l}{4}.
\end{eqnarray*}
Hence
\[\frac{1}{4q_{k_l}}+x_l+m\al-\frac{mp_{k_l}}{q_{k_l}}\in
\left({3\delta_l}/{4},{1}/{(4q_{k_l})}+{\delta_l}/{4}\right).\]

{\em Case 1.} Suppose that $l\geq n$ and
$$\frac{1}{4q_{k_l}}+x_l+m\al-\frac{mp_{k_l}}
{q_{k_l}}\in\left({1}/{(4q_{k_l})}-{\delta_l}/{2},{1}/{(4q_{k_l})}+
{\delta_l}/{4}\right].$$
By the definition of $f_l$,
\begin{eqnarray*}f_l\left(\frac{1}{4q_{k_l}}+x_l+m\alpha\right)&=&
f_l\left(\frac{1}{4q_{k_l}}+x_l+m\al-\frac{mp_{k_l}}{q_{k_l}}\right)
\\&=&L_l\left(x_l+m\al-\frac{mp_{k_l}}{q_{k_l}}\right)+M_l.
\end{eqnarray*}
Hence, by (\ref{trzypt}), (\ref{locflx}) and (\ref{m1}),
\begin{eqnarray*}\lefteqn{f_l\left(x+m\alpha\right)-f_l\left(x\right)
=f_l\left(\frac{1}{4q_{k_l}}+x_l+m\alpha\right)-f_l
\left(\frac{1}{4q_{k_l}}+x_l\right)}\\
&=&L_lm\left(\al-\frac{p_{k_l}}{q_{k_l}}\right) \leq
-L_l\frac{q_{k_{n-1}}}{8q_{k_l}q_{k_l+1}}=-M_l
\frac{q_{k_{n-1}}}{8q_{k_{l-1}}}.\;\;\;\;\;\;\;\;\;\;\;\;\;\;\;\;\;\;\;\;\;
\end{eqnarray*}

{\em Case 2.} Suppose that $l\geq n$ and
$$\frac{1}{4q_{k_l}}+x_l+m\al-\frac{mp_{k_l}}
{q_{k_l}}\in\left({3\delta_l}/{4},{1}/{(4q_{k_l})}-{\delta_l}/{2}\right).$$
By the definition of $f_l$,
\[f_l\left(\frac{1}{4q_{k_l}}+x_l+m\alpha\right)
=f_l\left(\frac{1}{4q_{k_l}}+x_l+m\al-\frac{mp_{k_l}}{q_{k_l}}\right)=M_l/2.\]
Moreover, by (\ref{locflx}),
\[f_l\left(\frac{1}{4q_{k_l}}+x_l\right)=L_lx_l+M_l\geq -
L_l\frac{\delta_l}{4}+M_l=\frac{3}{4}M_l.\]
Therefore, using (\ref{trzypt}),
\[f_l\left(x+m\alpha\right)-f_l\left(x\right)
=f_l\left(\frac{1}{4q_{k_l}}+x_l+m\alpha\right)-
f_l\left(\frac{1}{4q_{k_l}}+x_l\right)\leq-\frac{1}{4}M_l.\]

In summary, we have
$f_n\left(x+m\alpha\right)-f_n\left(x\right)\leq -M_n/8$ and
$f_l\left(x+m\alpha\right)-f_l\left(x\right)<0$ for $l>n$.
Moreover, $f_l\left(x+m\alpha\right)-f_l\left(x\right)\leq 2M_l$
for all $l\geq 1$. By (\ref{wzkocykl}) and (\ref{jednakzal}), it
follows that
\[\va^{(m)}(x)=\sum_{l=1}^\infty(f_l\left(x+m\alpha\right)-f_l\left(x\right))
\leq
-\frac{M_n}{8}+\sum_{l=1}^{n-1}2M_l\leq-\frac{M_n}{8}+
2\frac{M_n}{32}\leq-\frac{M_n}{16}\]
whenever  $q_{k_{n-1}}/4\leq -m<q_{k_n}/4$. Consequently,
$\va^{(m)}(x)\to-\infty$ as $m\to-\infty$.
\end{Remark}
\begin{Remark}
In a similar way we can prove that the sets
\[F^{--}=\bigcap_{n=1}^\infty \bigcup_{j=0}^{q_{k_n}-1}
\left(\left[-\frac{\delta_n}{8},\frac{\delta_n}{8}\right]+
\frac{1}{2q_{k_n}}+\frac{j}{q_{k_n}}\right)\] and
\[F^{+-}=\bigcap_{n=1}^\infty
\bigcup_{j=0}^{q_{k_n}-1}\left(\left[-\frac{\delta_n}{4},
\frac{\delta_n}{4}\right]+\frac{3}{4q_{k_n}}+\frac{j}{q_{k_n}}\right)\]
are  uncountable and  $x\in F^{--}$ implies
$\va^{(m)}(x)\to-\infty$ as $|m|\to+\infty$ and  $x\in F^{+-}$
implies $\va^{(m)}(x)\to+\infty$ as $m\to-\infty$ and
$\va^{(m)}(x)\to-\infty$ as $m\to+\infty$.
\end{Remark}

\begin{Prop}
For every irrational $\alpha\in\T$ there exists a continuous
function $\varphi:\T\to\R$ with zero mean such that the set
$D^{s_-s_+}(\alpha,\varphi)$ is uncountable for every
$s_-,s_+\in\{-,+\}$.\bez
\end{Prop}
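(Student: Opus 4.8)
The plan is to combine the construction of Section~\ref{varia} with the four Remarks that follow it; all that is really left to do is to exhibit a subsequence of denominators $(q_{k_n})$ and a sequence $(M_n)$ for which the standing hypotheses (\ref{m2v}), (\ref{m3v}), (\ref{jednakzal+}) and (\ref{jednakzal}) hold simultaneously, and then to read off the conclusion.

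First I would fix the irrational $\alpha$ and choose the indices $k_n$ recursively, all of them even. Since the denominators $q_j$ are strictly increasing and tend to $+\infty$, at the $n$-th step I may take $k_n$ even and as large as I please relative to $k_{n-1}$; in particular I can arrange that $q_{k_n-1}\ge 4q_{k_{n-1}}$, that $q_{k_n+1}\ge 5q_{k_{n-1}}$ (equivalently $\delta_n q_{k_n}=q_{k_{n-1}}/q_{k_n+1}\le 2/9$, which is exactly what makes the breakpoints in the definition of $f_n$ correctly ordered), and that $q_{k_{n-1}}\ge 100^{\,n}$ for all $n\ge 1$. Once $k_{n+1}$ is in turn chosen large enough, the inequality $q_{k_{n+1}}>16\,q_{k_n}q_{k_n+1}/q_{k_{n-1}}$ holds automatically, so (\ref{jednakzal+}) is satisfied. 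Now set $M_n=33^{\,n}$: then $M_n\to+\infty$, so (\ref{m2v}) holds; $M_{n+1}=33M_n\ge 33M_n$, so (\ref{jednakzal}) holds; and $\sum_{n\ge1} M_n/q_{k_{n-1}}\le\sum_{n\ge1}(33/100)^n<+\infty$, so (\ref{m3v}) holds (note that no integrality of $L_n$ is needed, only that $M_n\in\N$). With these data the function $\varphi(x)=\sum_{n=1}^\infty(f_n(x+\alpha)-f_n(x))$ of Section~\ref{varia} is well defined, continuous, and has zero mean.

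It then remains to invoke the four Remarks of Section~\ref{varia}. They produce uncountable sets $F^{++},F^{-+},F^{--},F^{+-}\subset[0,1)$ such that, for each pair $(s_-,s_+)\in\{-,+\}^2$, every $x\in F^{s_-s_+}$ satisfies $\varphi^{(m)}(x)\to s_-\infty$ as $m\to-\infty$ and $\varphi^{(m)}(x)\to s_+\infty$ as $m\to+\infty$, i.e.\ $F^{s_-s_+}\subseteq D^{s_-s_+}(\alpha,\varphi)$. Hence every $D^{s_-s_+}(\alpha,\varphi)$ is uncountable, which is the assertion. The whole argument is routine once the parameters are fixed; the only point that needs a little care is that (\ref{jednakzal+}) ties together the four consecutive denominators $q_{k_{n-1}}$, $q_{k_n}$, $q_{k_n+1}$, $q_{k_{n+1}}$ (together with the auxiliary index $k_n-1$), but since all of these grow without bound along any sufficiently fast subsequence of even indices, the greedy recursion above meets no genuine obstacle.
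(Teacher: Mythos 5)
Your proposal is correct and takes essentially the same route as the paper: the Proposition is precisely the combination of the Section~\ref{varia} construction with its four Remarks, and the paper likewise realizes the hypotheses (\ref{m2v}), (\ref{m3v}), (\ref{jednakzal+}) and (\ref{jednakzal}) by taking $M_n=33^n$ and a sufficiently rapidly growing subsequence of even-indexed denominators (compare the proof of Theorem~\ref{wymhaus}). Your explicit greedy choice of $(k_n)$, including the small check that $\delta_n q_{k_n}$ is small enough for the breakpoints of $f_n$ to be correctly ordered, only spells out details the paper leaves implicit.
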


In the next section, under some additional assumptions on
$\alpha$, we will show that the sets $D^{s_-s_+}(\alpha,\varphi)$
may have positive Hausdorff dimension.

\section{Hausdorff dimension of $D^{s_-s_+}(\alpha,\varphi)$}
\label{secdimension} Let $(E_k)_{k\geq 0}$ be a sequence of
subsets of $[0,1]$ such that each $E_k$ is a union of a finite
number of disjoint closed intervals (called $k$--th level basic
intervals). Suppose that  each interval of $E_{k-1}$ includes at
least $m_k\geq 2$ intervals of $E_{k}$, and the maximum length of
$k$--th level intervals tends to zero as $k\to+\infty$. Let us
consider the set
\[F=\bigcap_{k=0}^{\infty}E_k.\]
We will use the following criterion to estimate the Hausdorff
dimension from below.
\begin{Prop}[see Example~4.6 in \cite{Fal} and its proof]\label{propfal}
Suppose that  the $k$--th level intervals are separated by gaps of
length at least $\vep_k$ so that $\vep_k\geq\vep_{k+1}>0$ for
every $k\in\N$. Then
\[\dim_HF\geq\liminf_{k\to\infty}\frac{\log (m_1\ldots m_{k-1})}{-\log(m_k\vep_k)}.\]
\end{Prop}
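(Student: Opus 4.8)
The plan is to build an explicit probability measure on $F$ and invoke the mass distribution principle (see \cite{Fal}): if $\mu$ is a Borel probability measure supported on $F$ and there are $c>0$, $\delta>0$ with $\mu(U)\leq c|U|^{s}$ for every set $U$ of diameter at most $\delta$, then $\mathcal{H}^{s}(F)\geq \mu(F)/c>0$, hence $\dim_{H}F\geq s$.

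First I would record a couple of elementary consequences of the hypotheses. Since a $(k-1)$-st level interval contains at least $m_{k}\geq 2$ intervals of level $k$, any two consecutive of which are separated by a gap of length $\geq\vep_{k}$, its length is at least $(m_{k}-1)\vep_{k}\geq m_{k}\vep_{k}/2$; combined with the assumption that the maximal length of $k$-th level intervals tends to $0$ this gives both $\vep_{k}\to 0$ and $m_{k}\vep_{k}\to 0$, and comparing with the total length of $E_{0}\subset[0,1]$ it also gives $(m_{1}\cdots m_{k-1})\,m_{k}\vep_{k}\leq 2$, so that the $\liminf$ in the statement does not exceed $1$. Then I would define the natural measure $\mu$ on $F$ by distributing mass uniformly over the level-$0$ intervals and, recursively, splitting the mass $\mu(I)$ of a level-$(k-1)$ interval $I$ equally among the $N\geq m_{k}$ level-$k$ intervals it contains; the standard extension argument makes this a Borel probability measure carried by $F$, and every level-$k$ interval has $\mu$-measure at most $(m_{1}\cdots m_{k})^{-1}$.

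The core of the argument is the two-sided estimate: for an interval $B$ with $0<|B|<\vep_{1}$, choosing $k=k(B)\geq 2$ with $\vep_{k}\leq|B|<\vep_{k-1}$,
\[\mu(B)\ \leq\ \frac{1}{m_{1}\cdots m_{k-1}}\,\min\!\Bigl(1,\ \frac{2|B|}{m_{k}\vep_{k}}\Bigr).\]
For the first bound: as $|B|<\vep_{k-1}$ and distinct $(k-1)$-st level intervals are $\vep_{k-1}$-separated, the connected set $B$ meets at most one of them, whence $\mu(B)\leq(m_{1}\cdots m_{k-1})^{-1}$. For the second bound: since all $k$-th level intervals are $\vep_{k}$-separated while $|B|\geq\vep_{k}$, the interval $B$ meets at most $\lfloor|B|/\vep_{k}\rfloor+1\leq 2|B|/\vep_{k}$ of them, each of $\mu$-measure at most $(m_{1}\cdots m_{k})^{-1}$, and multiplying gives the claim. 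Establishing this estimate --- in particular realising that one should localise $B$ at the level $k$ determined by $\vep_{k}\leq|B|<\vep_{k-1}$ and then use whichever of the two bounds is the smaller --- is the only point I expect to need genuine care; everything else is bookkeeping.

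Finally I would extract the dimension bound. Assuming $s^{*}:=\liminf_{k}\log(m_{1}\cdots m_{k-1})/(-\log(m_{k}\vep_{k}))>0$ (otherwise there is nothing to prove), fix $0\leq s<s^{*}$, so $s<1$; pick $s'\in(s,s^{*})$ and, using the definition of $\liminf$ and $m_{k}\vep_{k}\to 0$, choose $K$ such that $m_{k}\vep_{k}<1$ and $(m_{1}\cdots m_{k-1})^{-1}\leq(m_{k}\vep_{k})^{s'}$ for all $k\geq K$. For an interval $B$ with $|B|<\vep_{K}$ one has $k(B)>K$, and the displayed estimate gives: when $|B|\geq m_{k}\vep_{k}/2$, $\mu(B)\leq(m_{k}\vep_{k})^{s'}\leq(2|B|)^{s}\leq 2|B|^{s}$; when $|B|<m_{k}\vep_{k}/2$, $\mu(B)\leq 2|B|(m_{k}\vep_{k})^{s'-1}\leq(2|B|)^{s'}\leq 2|B|^{s}$ (here $s'-1<0$). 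Since any set of diameter $d$ lies in an interval of length $d$, we get $\mu(U)\leq 2|U|^{s}$ whenever $\operatorname{diam}U<\vep_{K}$; the mass distribution principle then yields $\dim_{H}F\geq s$, and letting $s\uparrow s^{*}$ finishes the proof.
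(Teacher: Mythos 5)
Your proof is correct, and it is essentially the proof the paper points to: the paper gives no argument of its own but cites Example~4.6 of Falconer, whose proof is exactly your construction of the natural equally-split measure on $F$ followed by the mass distribution principle, localising an interval $B$ at the level $k$ with $\vep_k\leq|B|<\vep_{k-1}$ and using the two counting bounds $\mu(B)\leq(m_1\cdots m_{k-1})^{-1}$ and $\mu(B)\leq 2|B|/(\vep_k m_1\cdots m_k)$. The only cosmetic difference is that Falconer combines the two bounds via $\min(a,b)\leq a^{1-s}b^{s}$ while you run a two-case analysis according to whether $2|B|\geq m_k\vep_k$; these are interchangeable.
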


\vspace{1ex}

\begin{Remark}\label{liczint}
Let us consider two intervals $A,B\subset\R$ of length $a$ and $b$
respectively and let $h>0$. Suppose that $a<h<b$. Then there exist
at least $\lfloor\frac{b-a}{h}\rfloor$ intervals of the form
$A+kh$, $k\in\Z$ included in $B$. Moreover, if $b\geq 4h$ then
$\lfloor\frac{b-a}{h}\rfloor\geq \frac{b}{4h}$.
\end{Remark}

\vspace{1ex}

Fix $s_+,s_-\in\{+,-\}$ and let
$$E^{s_-s_+}_n:=\bigcup_{j=0}^{q_{k_n}-1}F^{s_-s_+}_{n,j}=
\bigcup_{j=0}^{q_{k_n}-1}(F^{s_-s_+}_{n,0}+\frac{j}{q_{k_n}}).$$
Then $F^{s_-s_+}=\bigcap_{n=0}^\infty E^{s_-s_+}_n$. As we have
already noticed
\[|F^{s_-s_+}_{n,j}|\geq\delta_n/4\geq\frac{4}{q_{k_{n+1}}}\text{
and}\frac{1}{q_{k_{n+1}}}\geq
\delta_{n+1}\geq|F^{s_-s_+}_{n+1,l}|,\] so $|F^{s_-s_+}_{n,j}|\geq
4h$ and $h\geq |F^{s_-s_+}_{n+1,l}|$, where $h=1/q_{k_n+1}$, and
then by Remark~\ref{liczint},
\[
m_{n+1}\geq\left\lfloor\frac{|F^{s_-s_+}_{n,j}|-|F^{s_-s_+}_{n+1,l}|}{h}\right\rfloor
\geq\frac{|F^{s_-s_+}_{n,j}|}{4h}\geq\frac{\delta_nq_{k_{n+1}}}{16}=
\frac{q_{k_{n-1}}q_{k_{n+1}}}{16q_{k_{n}}q_{k_{n}+1}}.
\]
Moreover,
\[\vep_n=\frac{1}{q_{k_{n}}}-|F^{s_-s_+}_{n,0}|\geq\frac{1}{q_{k_{n}}}-
\frac{\delta_n}{2}\geq\frac{1}{2q_{k_{n}}}.\]
It follows that
\[m_1\ldots m_{n-1}\geq m_{n-1}\geq\frac{q_{k_{n-3}}q_{k_{n-1}}}
{16q_{k_{n-2}}q_{k_{n-2}+1}}
\geq\frac{q_{k_{n-1}}}{16q_{k_{n-2}}q_{k_{n-2}+1}}.\] Moreover,
\[m_n\vep_n\geq\frac{q_{k_{n-2}}q_{k_{n}}}{16q_{k_{n-1}}q_{k_{n-1}+1}}
\frac{1}{2q_{k_{n}}}=\frac{q_{k_{n-2}}}{32q_{k_{n-1}}q_{k_{n-1}+1}}>
\frac{1}{32q_{k_{n-1}}q_{k_{n-1}+1}}.\] Thus
\begin{equation}\label{szaha}
\frac{\log (m_1\ldots m_{n-1})}{-\log(m_n\vep_n)}\geq\frac{\log
q_{k_{n-1}}-\log 16-\log q_{k_{n-2}}-\log q_{k_{n-2}+1}}{ \log
q_{k_{n-1}}+\log q_{k_{n-1}+1}+\log 32}.
\end{equation}

\begin{Th}\label{wymhaus}
Suppose that there exist $\gamma\geq 1$ and $C>0$ such that
\begin{equation}\label{sldiof}
q_{n+1}\leq Cq_{n}^\gamma\text{ for infinitely many }n\in\N.
\end{equation}
Then there exists a continuous function $\va:\T\to\R$ with
zero mean such that $\dim_HD^{s_-s_+}(\alpha,\va)\geq
1/(1+\gamma)$ for all $s_+,s_-\in\{+,-\}$.
\end{Th}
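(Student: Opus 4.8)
The plan is to use the construction from Section~\ref{varia} with a well-chosen subsequence $(q_{k_n})$ of denominators and then invoke Proposition~\ref{propfal} together with the estimate~(\ref{szaha}). First I would use hypothesis~(\ref{sldiof}): there are infinitely many $n$ with $q_{n+1}\leq Cq_n^\gamma$. From this infinite set I extract a subsequence $(k_n)_{n\geq 1}$ with all $k_n$ of the same parity (even, say), growing fast enough to guarantee conditions~(\ref{m2v}),~(\ref{m3v}),~(\ref{jednakzal+}) and~(\ref{jednakzal}): condition~(\ref{jednakzal+}) requires $q_{k_{n+1}}$ to dominate $16 q_{k_n}q_{k_n+1}/q_{k_{n-1}}$ and $q_{k_n-1}\geq 4q_{k_{n-1}}$, which can always be arranged by passing to a sufficiently sparse subsequence, while~(\ref{jednakzal}) ($M_{n+1}\geq 33M_n$) and~(\ref{m2v}),~(\ref{m3v}) are satisfied by choosing $M_n$ to grow, e.g.\ $M_n=33^n$ (and then checking $\sum M_n/q_{k_{n-1}}<\infty$, which forces one more sparseness requirement on $(k_n)$). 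The point of~(\ref{sldiof}) is that it lets us additionally require $k_n+1\in\{k_n,\dots\}$ be a "small gap" index, i.e.\ $q_{k_n+1}\leq Cq_{k_n}^\gamma$ for every $n$; this is the crucial control that makes the dimension bound nontrivial.

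With this $\va$ constructed, the Remarks in Section~\ref{varia} already give $F^{s_-s_+}\subseteq D^{s_-s_+}(\alpha,\va)$ for every pair $(s_-,s_+)$, so it suffices to bound $\dim_H F^{s_-s_+}$ from below. Here $F^{s_-s_+}=\bigcap_n E^{s_-s_+}_n$ is a Cantor-type set of exactly the form required by Proposition~\ref{propfal}, with the counts $m_{n+1}$ and gaps $\vep_n$ already estimated in the text preceding~(\ref{szaha}). So I would plug into~(\ref{szaha}) and take the $\liminf$. Using $q_{k_n+1}\leq Cq_{k_n}^\gamma$, the denominator $\log q_{k_{n-1}}+\log q_{k_{n-1}+1}+\log 32$ is at most $(1+\gamma)\log q_{k_{n-1}}+\log(32C)$, while the numerator $\log q_{k_{n-1}}-\log 16-\log q_{k_{n-2}}-\log q_{k_{n-2}+1}$ is at least $\log q_{k_{n-1}}-(1+\gamma)\log q_{k_{n-2}}-\log(16C)$. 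Since we are free to make $(k_n)$ grow fast enough that $\log q_{k_{n-1}}/\log q_{k_{n-2}}\to\infty$ (hence also the additive constants become negligible relative to $\log q_{k_{n-1}}$), the ratio~(\ref{szaha}) tends to $1/(1+\gamma)$, giving $\dim_H D^{s_-s_+}(\alpha,\va)\geq 1/(1+\gamma)$.

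The main obstacle is the simultaneous juggling of all the sparseness/growth requirements: the subsequence $(k_n)$ must be extracted from the (possibly irregularly distributed) set furnished by~(\ref{sldiof}) while being sparse enough for~(\ref{jednakzal+}), for the convergence of $\sum M_n/q_{k_{n-1}}$, and for $\log q_{k_{n-1}}/\log q_{k_{n-2}}\to\infty$ — and one must check these do not conflict. They do not: each is a "$k_n$ large enough given $k_1,\dots,k_{n-1}$" condition, so a greedy inductive choice works, and the parity constraint only costs passing to every other admissible index. A secondary point to be careful about is that the estimate~(\ref{szaha}) was derived in the text under the standing assumptions of Section~\ref{varia}, so I must confirm that the chosen $(k_n)$ and $(M_n)$ indeed satisfy~(\ref{jednakzal+}) and~(\ref{jednakzal}) before quoting it; once that is in place the dimension computation is routine.
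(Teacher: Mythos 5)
Your proposal is correct and follows essentially the same route as the paper: choose a same-parity subsequence of denominators along which $q_{k_n+1}\leq Cq_{k_n}^\gamma$ holds, sparse enough (the paper encodes this as $q_{k_n}\geq(q_{k_{n-1}}q_{k_{n-1}+1})^n$ plus $q_{k_n-1}\geq 4q_{k_{n-1}}$, which plays the role of your greedy "large enough" conditions) so that with $M_n=33^n$ the hypotheses of Section~\ref{varia} hold, and then apply Proposition~\ref{propfal} via~(\ref{szaha}) to $F^{s_-s_+}\subset D^{s_-s_+}(\alpha,\va)$, with the sparseness making the $\log q_{k_{n-2}}$, $\log q_{k_{n-2}+1}$ terms negligible and the Diophantine control on $\log q_{k_{n-1}+1}$ yielding the limit $1/(1+\gamma)$.
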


\begin{proof}
By assumption, we can find a subsequence $(q_{k_n})$ of even
denominators of $\alpha$ (or odd) such that
\begin{equation}\label{wzrost}
q_{k_n}\geq(q_{k_{n-1}}q_{k_{n-1}+1})^n, \;q_{k_n-1}\geq
4q_{k_{n-1}}\text{ and }q_{k_{n}+1}\leq Cq^\gamma_{k_n}\text{ for
all }n\in\N.
\end{equation} Take $M_n=33^n$ and let us consider the
function $\varphi$ constructed in Section~\ref{varia}. Then
(\ref{m2v}), (\ref{m3v}), (\ref{jednakzal+}) and (\ref{jednakzal})
hold. It follows that $F^{s_-s_+}\subset D^{s_-s_+}(\alpha,\va)$.
Moreover, by Proposition~\ref{propfal}, (\ref{szaha}) and
(\ref{wzrost}),
\begin{eqnarray*}
\dim_HF^{s_-s_+}&\geq&\liminf_{n\to\infty}\frac{\log (m_1\ldots
m_{n-1})}{-\log(m_n\vep_n)}\\
&\geq&\liminf_{n\to\infty}\frac{\log q_{k_{n-1}}-\log 16-\log
q_{k_{n-2}}-\log q_{k_{n-2}+1}}{ \log q_{k_{n-1}}+\log
q_{k_{n-1}+1}+\log 32}\\
&\geq&\lim_{n\to\infty}\frac{(1-1/n)\log q_{k_{n-1}}-\log 16}{
(1+\gamma)\log q_{k_{n-1}}+\log C+\log 32}=\frac{1}{1+\gamma},\\
\end{eqnarray*}
which completes the proof.
\end{proof}

\begin{Remark}
In particular, for every $\alpha\in DC(\gamma)$  the condition
(\ref{sldiof}) holds for some $C>0$, so the statement of
Theorem~\ref{wymhaus} remains valid.
\end{Remark}

\begin{Remark} It is of course true that
$\bigcup_{n\in\Z}\left(F^{s_-s_+}+n\alpha\right)\subset
D^{s_-s_+}(\alpha,\va)$, however
$$\dim_H\bigcup_{n\in\Z}\left(F^{s_-s_+}+n\alpha\right)=\sup_{n\in\Z}
\dim_H\left(F^{s_-s_+}+n\alpha\right)= \dim_H F^{s_-s_+}.$$ Notice
also that for each $n\geq1$ the set $F^{s_-s_+}$ is covered by
$q_{k_n}$ intervals each of which has length $\delta_n/4$. By
Proposition~4.1 in \cite{Fal}, using (\ref{wzrost}), it follows
that
$$\dim_H F^{s_-s_+}\leq \liminf_{n\to\infty}\frac{\log
q_{k_n}}{-\log
\left(q_{k_{n-1}}/(4q_{k_n}q_{k_n+1})\right)}\leq\frac12.$$ It
follows that if we use only sets $F^{s_-s_+}$ we cannot estimate
the Hausdorff dimension of $D^{s_-s_+}(\alpha,\va)$ from below by
a number greater than $1/2$. Note finally that if $\alpha$ has
bounded partial quotients then $\dim_H F^{s_-s_+}=\frac12$.
\end{Remark}

\begin{Th}\label{wym12}
For almost every $\alpha\in\T$ there exists a continuous function
$\va:\T\to\R$ with zero mean such that
$\dim_HD^{s_-s_+}(\alpha,\va)\geq 1/2$ for all
$s_+,s_-\in\{+,-\}$.
\end{Th}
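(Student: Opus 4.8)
The idea is to combine Theorem~\ref{wymhaus} with the fact that the Diophantine condition \eqref{sldiof} is satisfied for $\gamma=1$ along a subsequence for almost every $\alpha$. Indeed, recall the classical metric fact that for a.e.\ $\alpha$ one has $a_{n+1}\geq 1$ infinitely often is trivial, but more is true: for a.e.\ $\alpha$ the partial quotients satisfy $a_n=1$ for infinitely many $n$ (this follows from the Gauss--Kuzmin statistics, since $\mu_{\mathrm{Gauss}}(\{a_1=1\})>0$ and by ergodicity of the Gauss map the event $\{a_n=1\}$ occurs with frequency equal to that positive measure, hence infinitely often). Whenever $a_{n+1}=1$ we get $q_{n+1}=q_n+q_{n-1}\leq 2q_n$, so \eqref{sldiof} holds with $\gamma=1$ and $C=2$ for infinitely many $n$.

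First I would state and justify the metric lemma: for Lebesgue-a.e.\ $\alpha\in\T$, $\liminf_n q_{n+1}/q_n<\infty$; in fact one can take the $\liminf$ to equal $\varphi_0:=(1+\sqrt5)/2$ or smaller along a subsequence, but all that is needed is a finite constant $C$ with $q_{n+1}\leq Cq_n$ infinitely often. This is immediate from ergodicity of the Gauss map $G(x)=\{1/x\}$ with respect to the Gauss measure $\tfrac1{\log 2}\tfrac{dx}{1+x}$: the set $\{x\in(0,1):a_1(x)=1\}=(1/2,1)$ has positive Gauss measure, so by Birkhoff's theorem $a_{n}(\alpha)=1$ for infinitely many $n$, for a.e.\ $\alpha$. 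Then $q_{n+1}=a_{n+1}q_n+q_{n-1}<2q_n$ along that subsequence.

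Next, fix such an $\alpha$. Then \eqref{sldiof} holds with $\gamma=1$ and $C=2$. Applying Theorem~\ref{wymhaus} with $\gamma=1$ produces a continuous function $\va:\T\to\R$ with zero mean such that $\dim_H D^{s_-s_+}(\alpha,\va)\geq 1/(1+\gamma)=1/2$ for all $s_+,s_-\in\{+,-\}$. This is exactly the assertion. One should double-check that the subsequence $(q_{k_n})$ extracted inside the proof of Theorem~\ref{wymhaus} (which must also satisfy the fast-growth condition $q_{k_n}\geq (q_{k_{n-1}}q_{k_{n-1}+1})^n$, the separation $q_{k_n-1}\geq 4q_{k_{n-1}}$, and even/odd parity) can indeed be chosen as a sub-subsequence of the infinite set of indices where $a_{n+1}=1$: this is fine because from an infinite set of ``good'' indices we are always free to pass to a sparser infinite subset realizing arbitrarily fast growth and the auxiliary arithmetic conditions, and to fix the parity of the $k_n$ by restricting to one of the two residue classes mod $2$ (at least one of which still meets our infinite set infinitely often).

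The only mild subtlety — and the step I would be most careful about — is the compatibility of the three requirements on the chosen denominator subsequence: realizing $q_{k_{n}+1}\leq Cq_{k_n}^{\gamma}$ forces $k_n$ to lie in the ``good'' set, while realizing $q_{k_n}\geq(q_{k_{n-1}}q_{k_{n-1}+1})^n$ and $q_{k_n-1}\geq4q_{k_{n-1}}$ only requires $k_n$ to be large relative to $k_{n-1}$, and fixing parity costs at most passing to every other good index. Since the good set is infinite, all of this can be arranged simultaneously by a routine recursive choice, exactly as in the proof of Theorem~\ref{wymhaus}. Hence no new construction is needed; the theorem is a direct corollary of Theorem~\ref{wymhaus} together with the Gauss--Kuzmin/ergodicity input. \bez
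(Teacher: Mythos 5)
Your proof is correct, and its skeleton is the same as the paper's: an almost-everywhere metric fact about continued fractions plus the machinery of Theorem~\ref{wymhaus}. The difference lies in the metric input and in how Theorem~\ref{wymhaus} is invoked. The paper quotes Khinchin's result that for a.e.\ $\alpha$ there are $C>0$ and infinitely many $k$ with $q_{k+1}\leq Cq_k\log q_k$; because of the $\log$ factor this is not of the form \eqref{sldiof} with $\gamma=1$, so the paper must go ``by the proof of'' Theorem~\ref{wymhaus} and redo the $\liminf$ estimate, where the extra $\log\log q_{k_n}+\log C$ term is negligible and the ratio still tends to $1/2$. You instead use the stronger (but still a.e.) fact that $a_n=1$ infinitely often, obtained from ergodicity of the Gauss map, which gives $q_{n+1}\leq 2q_n$ infinitely often; this lets you apply the \emph{statement} of Theorem~\ref{wymhaus} with $\gamma=1$, $C=2$ as a black box and read off the bound $1/(1+\gamma)=1/2$ directly, with no need to re-enter its proof. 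Your concern about compatibility of the requirements \eqref{wzrost} (fast growth, $q_{k_n-1}\geq4q_{k_{n-1}}$, fixed parity of the $k_n$) with the ``good'' indices is well placed but harmless: those requirements are internal to the proof of Theorem~\ref{wymhaus} and are met by a recursive, sufficiently sparse choice within any infinite set of indices satisfying \eqref{sldiof}, exactly as in that proof. So your argument is a valid and slightly cleaner alternative, at the modest cost of invoking Gauss--Kuzmin/Birkhoff instead of the cited inequality from Khinchin's book; the paper's version needs only the weaker $q_{k+1}=\mbox{O}(q_k\log q_k)$ along a subsequence but must reuse the internal estimate \eqref{szaha}.
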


\begin{proof}
Recall that (see \cite{Kh}) for a.e.\ $\alpha\in\T$ there exist
$C>0$ and an increasing sequence $(k_n)$ of natural numbers such
that
\[q_{k_n+1}\leq C\cdot q_{k_n}\log q_{k_n}\text{ for all }n\in\N.\]
By the proof of Theorem~\ref{wymhaus}, there exists a continuous
function $\va:\T\to\R$ with zero mean such that
\[\dim_HD^{s_-s_+}(\alpha,\va)\geq\liminf_{n\to\infty}
\frac{\log q_{k_{n}}}{ \log q_{k_{n}}+\log
q_{k_{n}+1}}\] for all $s_+,s_-\in\{+,-\}$. However,
\[\frac{\log q_{k_{n}}}{ \log q_{k_{n}}+\log
q_{k_{n}+1}}\geq\frac{\log q_{k_{n}}}{ 2\log q_{k_{n}}+\log\log
q_{k_{n}}+\log C}\to 1/2\] as $n\to+\infty$. Consequently,
$\dim_HD^{s_-s_+}(\alpha,\va)\geq 1/2$.
\end{proof}

\section{Smooth cylindrical transformations  over rotations on higher dimensional
tori}\label{highdim} In this section we will deal with cylindrical
transformations over rotations on higher dimensional tori. More
precisely, we will construct some examples of cylindrical
transformations $T_\va$ of class $C^r$, $r\geq 1$ admitting dense
and discrete orbits and such that the set $D(\alpha,\va)$ has
positive Hausdorff dimension; the construction is based on
Yoccoz's method  in \cite{Yo}.

Fix $d\geq 3$. Let $a>1$ be a real number such that
$\bar{a}:=2a^{d-1}-a^d-1>0$. (The derivative of the function
$P(a)=2a^{d-1}-a^d-1$ is $d-2>0$ at $a=1$ and $P(1)=0$, so $P$
takes positive values on a nonempty interval $(1,b)$.) Let
$\alpha$ be an irrational number such that there exists $C_0>4$
for which
\begin{equation}\label{wieloqn}
4 q_n^{a^d}\leq q_{n+1}\leq C_0 q_n^{a^d}\text{ for all }n\in\N.
\end{equation}
Fix $0<\vep<\bar{a}$  and set
$\va(x)=\sum_{n=1}^{\infty}(f_n(x+\alpha)-f_n(x))$, where
\[f_n(x)=\frac{q_{n+1}}{q_n^{1+\bar{a}-\vep}}(1-\cos 2\pi
q_nx).\] Note that $f_n(x)\geq 0$ for $x\in\T$ and $n\geq 1$. Let
\[F_n=\bigcup_{j=0}^{q_n-1}\left(\frac{j}{q_n}+\left[-\frac{1}{\sqrt{q_nq_{n+1}}},
\frac{1}{\sqrt{q_nq_{n+1}}}\right]\right)\text{ and }
F=\bigcap_{n=1}^{\infty}F_n.\]

\begin{Lemma}\label{lempoje2}
The function $\va:\T\to\R$ is of class
$C^{\lceil\bar{a}-\vep\rceil}$ and there exist
$\theta=\theta(C_0,d,a,\vep)>0$ and $K=K(a,\vep)>0$ such that
$\va^{(m)}(x)\geq -K$ for all $x\in F$ and $m\in\Z$. If
additionally $q_n^{a^{d-1}}\leq |m|\leq \frac{1}{4}q_{n+1}$ and
$q_n^{\bar{a}}\geq 64 C_0$ then $\va^{(m)}(x)\geq\theta
|m|^{\vep/a^d}-K$.
\end{Lemma}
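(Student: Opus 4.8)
The statement has three parts: regularity of $\va$, the uniform lower bound $\va^{(m)}(x)\geq -K$ on $F$, and the improved lower bound $\va^{(m)}(x)\geq\theta|m|^{\vep/a^d}-K$ under the extra hypotheses. For the regularity, I would differentiate the series $\va(x)=\sum_n(f_n(x+\alpha)-f_n(x))$ term by term. Each $f_n(x)=\frac{q_{n+1}}{q_n^{1+\bar a-\vep}}(1-\cos 2\pi q_nx)$ has $j$-th derivative of size $O\!\big(\frac{q_{n+1}}{q_n^{1+\bar a-\vep}}q_n^{\,j}\big)=O\!\big(q_{n+1}q_n^{\,j-1-\bar a+\vep}\big)$; using the left-hand inequality in~(\ref{wieloqn}) in the reverse direction, i.e.\ $q_{n+1}\leq C_0q_n^{a^d}$ (combined with $q_{n+2}\geq 4q_{n+1}^{\phantom{a}}$... more precisely $q_{n+1}\le C_0q_n^{a^d}$ so $q_n\ge (q_{n+1}/C_0)^{1/a^d}$), one sees $q_{n+1}q_n^{\,j-1-\bar a+\vep}$ is summable precisely when $j<\bar a-\vep+ (\text{something})$; the exponent bookkeeping shows the series of $j$-th derivatives converges uniformly for $j\le\lceil\bar a-\vep\rceil$, whence $\va\in C^{\lceil\bar a-\vep\rceil}$. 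This is routine once the exponents are tracked carefully.

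For the two lower bounds, the key identity is $\va^{(m)}(x)=\sum_{n=1}^\infty\big(f_n(x+m\alpha)-f_n(x)\big)$, and since each $f_n\ge 0$ it suffices to control $f_n(x)$ from above on $F$ and then show one single term dominates. If $x\in F$ then for every $n$ there is $j$ with $|x-j/q_n|\le 1/\sqrt{q_nq_{n+1}}$; by $1/q_n$-periodicity of $f_n$, $f_n(x)=f_n(x-j/q_n)$, and $|1-\cos 2\pi q_n y|\le 2\pi^2 q_n^2y^2$ gives $f_n(x)\le \frac{q_{n+1}}{q_n^{1+\bar a-\vep}}\cdot 2\pi^2 q_n^2\cdot\frac{1}{q_nq_{n+1}}=2\pi^2 q_n^{-\bar a+\vep}=2\pi^2/q_n^{\bar a-\vep}$. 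Hence $\sum_n f_n(x)\le 2\pi^2\sum_n q_n^{-(\bar a-\vep)}=:K'<\infty$ (this is where $\vep<\bar a$ and the fast growth $q_{n+1}\ge 4q_n^{a^d}$ are used — actually one needs $\bar a-\vep>0$, which holds). Therefore $\va^{(m)}(x)=\sum_n f_n(x+m\alpha)-\sum_n f_n(x)\ge 0-K'$, giving the first bound with $K=K'=K(a,\vep)$, uniformly in $m\in\Z$.

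For the sharper estimate, assume $q_n^{a^{d-1}}\le|m|\le\frac14 q_{n+1}$ and $q_n^{\bar a}\ge 64C_0$. The strategy is to show that the $n$-th term $f_n(x+m\alpha)$ alone is large. Write $x=j/q_n+x_n$ with $|x_n|\le 1/\sqrt{q_nq_{n+1}}$; by periodicity $f_n(x+m\alpha)=\frac{q_{n+1}}{q_n^{1+\bar a-\vep}}\big(1-\cos 2\pi q_n(x_n+m\alpha)\big)=\frac{q_{n+1}}{q_n^{1+\bar a-\vep}}\big(1-\cos 2\pi(q_nx_n+m\,q_n\alpha)\big)$. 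Now $q_nx_n$ is tiny ($|q_nx_n|\le\sqrt{q_n/q_{n+1}}$, very small), and $\|m q_n\alpha\|$: since $|m|\le\frac14 q_{n+1}$ and $\|q_n\alpha\|<1/q_{n+1}$ we get $|m q_n\alpha-\text{(nearest int)}|\le\frac14$; meanwhile since $|m|\ge q_n^{a^{d-1}}$ and $\|q_n\alpha\|>1/(2q_{n+1})\ge 1/(2C_0q_n^{a^d})$, we get $\|m q_n\alpha\|\ge \tfrac{|m|}{2C_0 q_n^{a^d}}\cdot(\text{careful, }m\|q_n\alpha\|$ need not equal $\|mq_n\alpha\|)$. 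The clean way is: $\|mq_n\alpha\|$ — using the three-distance/continued fraction estimate, for $|m|< q_{n+1}$ we have $\|mq_n\alpha\|=|m|\cdot\|q_n\alpha\|$ up to sign (because $|m\,\|q_n\alpha\||\le \tfrac14<\tfrac12$), hence $\|mq_n\alpha\|=|m|\,\|q_n\alpha\|\ge \tfrac{|m|}{2q_{n+1}}\ge\tfrac{|m|}{2C_0q_n^{a^d}}$. Since $|m|\ge q_n^{a^{d-1}}$ this is $\ge \tfrac{1}{2C_0}q_n^{a^{d-1}-a^d}$; hmm this goes to $0$. So instead express in terms of $|m|$: $1-\cos 2\pi(q_nx_n+mq_n\alpha)\ge c\,\|q_nx_n+mq_n\alpha\|^2\ge c\,(\|mq_n\alpha\|-|q_nx_n|)^2$, and $\|mq_n\alpha\|=|m|\|q_n\alpha\|\ge\frac{|m|}{2q_{n+1}}$, while $|q_nx_n|\le\frac{1}{\sqrt{q_{n+1}/q_n}}\le\frac{|m|}{2q_{n+1}}\cdot\tfrac12$ should follow from $q_n^{\bar a}\ge 64C_0$-type bounds and $|m|\ge q_n^{a^{d-1}}$ — this is the delicate comparison. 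Granting it, $1-\cos(\cdots)\ge c\big(\tfrac{|m|}{2q_{n+1}}\big)^2$, so $f_n(x+m\alpha)\ge \frac{q_{n+1}}{q_n^{1+\bar a-\vep}}\cdot\frac{c|m|^2}{4q_{n+1}^2}=\frac{c|m|^2}{4 q_n^{1+\bar a-\vep}q_{n+1}}\ge \frac{c|m|^2}{4C_0 q_n^{1+\bar a-\vep+a^d}}$. Using $q_n\le |m|^{1/a^{d-1}}$ (from $|m|\ge q_n^{a^{d-1}}$) to bound the denominator $q_n^{1+\bar a-\vep+a^d}\le |m|^{(1+\bar a-\vep+a^d)/a^{d-1}}$, and recalling $\bar a=2a^{d-1}-a^d-1$ so $1+\bar a+a^d=2a^{d-1}$, the exponent becomes $(2a^{d-1}-\vep)/a^{d-1}=2-\vep/a^{d-1}$; thus $f_n(x+m\alpha)\gtrsim |m|^{2-(2-\vep/a^{d-1})}=|m|^{\vep/a^{d-1}}\ge |m|^{\vep/a^d}$ (since $a>1$ and we may as well state the weaker exponent). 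Then $\va^{(m)}(x)\ge f_n(x+m\alpha)-K'\ge \theta|m|^{\vep/a^d}-K$ with $\theta=\theta(C_0,d,a,\vep)$, $K=K(a,\vep)$.

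\textbf{Main obstacle.} The delicate point is the third part: verifying that the argument $q_nx_n+mq_n\alpha$ of the cosine is, modulo $1$, bounded away from $0$ by a quantity of order $|m|/q_{n+1}$, i.e.\ that the ``error'' term $q_nx_n$ coming from $x$ not being exactly $j/q_n$ is genuinely negligible compared with $\|mq_n\alpha\|$. This requires simultaneously (i) $|m|$ not too large (so $mq_n\alpha$ doesn't wrap past $\tfrac12$, handled by $|m|\le\tfrac14 q_{n+1}$), (ii) $|m|$ not too small (so $\|mq_n\alpha\|=|m|\|q_n\alpha\|\ge |m|/(2q_{n+1})$ dominates $|q_nx_n|\le\sqrt{q_n/q_{n+1}}$, which is where $|m|\ge q_n^{a^{d-1}}$ and the threshold $q_n^{\bar a}\ge 64C_0$ enter), and (iii) converting the resulting lower bound on $f_n(x+m\alpha)$ into a power of $|m|$ via the two-sided bound~(\ref{wieloqn}) and the algebraic identity $1+\bar a+a^d=2a^{d-1}$. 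Getting all the constants to line up — in particular tracking how $64C_0$ and the factors of $\pi$ propagate into $\theta$ — is the bookkeeping heart of the proof; conceptually everything else (nonnegativity of $f_n$, periodicity reduction, summability on $F$) is straightforward.
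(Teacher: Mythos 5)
Your treatment of the second and third assertions is sound, but the regularity step fails as written. You bound the $j$-th derivative of the $n$-th term of the series by the $j$-th derivative of $f_n$ alone, i.e.\ by $O\big(q_{n+1}q_n^{\,j-1-\bar{a}+\vep}\big)$. With $q_{n+1}\asymp q_n^{a^d}$ from~(\ref{wieloqn}) this is of order $q_n^{\,a^d+j-1-\bar{a}+\vep}$, and since $a^d-\bar{a}-1=2a^d-2a^{d-1}>0$ the exponent is strictly positive for every $j\geq 0$; so these bounds diverge rather than being summable -- they do not even give uniform convergence of the undifferentiated series, hence not even continuity of $\va$. Summability of your bound would require $j<1+\bar{a}-\vep-a^d$, which is negative, so no exponent bookkeeping rescues this route. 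The missing idea is the cancellation inside each term: write $f_n(x+\alpha)-f_n(x)=\frac{2q_{n+1}}{q_n^{1+\bar{a}-\vep}}\sin(\pi q_n\alpha)\,\sin\big(2\pi q_n(x+\alpha/2)\big)$ and use $|\sin\pi q_n\alpha|\leq\pi\|q_n\alpha\|<\pi/q_{n+1}$ (from (\ref{singora}) and (\ref{m1})); the factor $1/q_{n+1}$ cancels the $q_{n+1}$ in front, so the $k$-th derivative of the $n$-th term is $\leq(2\pi)^{k+1}/q_n^{1+\bar{a}-\vep-k}$, which is summable for all $k\leq\lceil\bar{a}-\vep\rceil$ because $k<1+\bar{a}-\vep$ and $q_n\geq 4^n$. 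This is exactly how the paper obtains $\va\in C^{\lceil\bar{a}-\vep\rceil}$.

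The remaining two parts essentially coincide with the paper's argument. Your bound $f_n(x)\leq 2\pi^2/q_n^{\bar{a}-\vep}$ on $F$ and the constant $K=2\pi^2\sum_n 4^{-n(\bar{a}-\vep)}$ are literally the paper's computation. For the sharper estimate the paper uses the product formula $\cos 2\pi q_nx-\cos 2\pi q_n(x+m\alpha)=2\sin\pi\big(\pm2\|q_nx\|+|m|\|q_n\alpha\|\big)\sin\pi|m|\|q_n\alpha\|$ together with (\ref{sindul}), while you use $1-\cos 2\pi t\geq 8\|t\|^2$ and the triangle inequality $\|mq_n\alpha\|-|q_nx_n|$; this is only a cosmetic difference. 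The ``delicate comparison'' you left as granted does hold, exactly along the lines you indicate: by~(\ref{wieloqn}), $\sqrt{q_nq_{n+1}}\leq\sqrt{C_0}\,q_n^{(1+a^d)/2}=\sqrt{C_0}\,q_n^{a^{d-1}-\bar{a}/2}\leq\sqrt{C_0/q_n^{\bar{a}}}\,|m|\leq|m|/8$ once $q_n^{\bar{a}}\geq 64C_0$ and $|m|\geq q_n^{a^{d-1}}$, so $|q_nx_n|\leq\sqrt{q_nq_{n+1}}/q_{n+1}\leq|m|/(8q_{n+1})$ is dominated by $\|mq_n\alpha\|=|m|\|q_n\alpha\|\geq|m|/(2q_{n+1})$ (the identity $\|mq_n\alpha\|=|m|\|q_n\alpha\|$ being justified as you say by $|m|\|q_n\alpha\|\leq 1/4$). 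Your final exponent algebra via $1+\bar{a}+a^d=2a^{d-1}$ is correct and matches the paper's chain $f_n(x+m\alpha)-f_n(x)\geq m^2/(q_n^{1+\bar{a}-\vep}q_{n+1})\geq q_n^{\vep}/C_0\geq|m|^{\vep/a^d}/C_0^{1+\vep/a^d}$. So once you repair the regularity step with the coboundary cancellation, the proof is complete and in substance the same as the paper's.
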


\begin{proof}
Since
\[f_n(x+\alpha)-f_n(x)=\frac{2q_{n+1}}{q_{n}^{1+\bar{a}-\vep}}\sin\pi
q_n\alpha\cdot\sin 2\pi q_n(x+\alpha/2),\] by (\ref{singora}) and
(\ref{m1}),
\begin{eqnarray*}\left|\frac{d^k}{dx^k}(f_n(x+\alpha)-f_n(x))\right|&=&
\frac{q_{n+1}}{q_{n}^{1+\bar{a}-\vep}}2|\sin\pi
q_n\alpha| \left|\frac{d^k}{dx^k}\sin 2\pi
q_n(x+\alpha/2)\right|\\
&\leq& \frac{q_{n+1}}{q_{n}^{1+\bar{a}-\vep}}2\pi\|q_n\alpha\|
(2\pi q_n )^k\leq \frac{(2\pi)^{k+1}}{q_n^{1+\bar{a}-\vep-k}}.
\end{eqnarray*}
Moreover $\sum_{n=1}^\infty 1/q_n^{1+\bar{a}-\vep-k}<+\infty$ for
$0\leq k\leq \lceil\bar{a}-\vep\rceil$, so $\va\in
C^{\lceil\bar{a}-\vep\rceil}(\T,\R)$.

Suppose that $x\in F$. Then $x\in F_n$ for every $n\geq 1$ and
hence $\|q_nx\|\leq \sqrt{q_n}/\sqrt{q_{n+1}}$.  In view of
(\ref{sindul}),
\[\sin^2\pi q_nx\leq\pi^2\|q_nx\|^2\leq\pi^2q_n/q_{n+1}.\]
Therefore for every $m\in\Z$
\begin{eqnarray*}f_n(x+m\alpha)-f_n(x)&\geq&
-f_n(x)=-\frac{q_{n+1}}{q_n^{1+\bar{a}-\vep}}(1-\cos 2\pi
q_nx)\\&=&-2\frac{q_{n+1}}{q_n^{1+\bar{a}-\vep}}\sin^2 \pi
q_nx\geq
-2\pi^2\frac{q_{n+1}}{q_n^{1+\bar{a}-\vep}}\frac{q_n}{q_{n+1}}
=-\frac{2\pi^2}{q_n^{\bar{a}-\vep}}.
\end{eqnarray*}
Let
\[K=K(a,\vep):=2\pi^2\sum_{n=1}^\infty\frac{1}{4^{n(\bar{a}-\vep)}}=
\frac{2\pi^2}{4^{\bar{a}-\vep}-1}.\]
In view of (\ref{wieloqn}), $q_n\geq 4^n$, so
\begin{equation}\label{ujem}
\va^{(m)}(x)=\sum_{n=1}^\infty(f_n(x+m\alpha)-f_n(x))\geq
-2\pi^2\sum_{n=1}^\infty\frac{1}{q_n^{\bar{a}-\vep}}\geq-K.
\end{equation}

Suppose additionally that $q_n^{a^{d-1}}\leq |m|\leq
\frac{1}{4}q_{n+1}$. By (\ref{m1}), $$\|mq_n\alpha\|\leq
|m|\|q_n\alpha\|<1/4,\text{ and hence }\|mq_n\alpha\|=
|m|\|q_n\alpha\|.$$ Therefore
\begin{eqnarray*}
f_n(x+m\alpha)-f_n(x)&=&\frac{q_{n+1}}{q_{n}^{1+\bar{a}-\vep}}(\cos2\pi
q_nx-\cos2\pi
q_n(x+m\alpha))\\&=&\frac{q_{n+1}}{q_{n}^{1+\bar{a}-\vep}}\left(\cos2\pi(\pm\|
q_nx\|)-\cos2\pi
(\pm\|q_nx\|+\|mq_n\alpha\|)\right)\\
&=&\frac{q_{n+1}}{q_{n}^{1+\bar{a}-\vep}}2\sin\pi (\pm2\|q_nx\|+
\|mq_n\alpha\|)\sin\pi
\|mq_n\alpha\|\\
&=&\frac{q_{n+1}}{q_{n}^{1+\bar{a}-\vep}}2\sin\pi (\pm2\|q_nx\|+
|m|\|q_n\alpha\|)\sin\pi
|m|\|q_n\alpha\|.\\
\end{eqnarray*}
By (\ref{wieloqn}),
\[\sqrt{q_nq_{n+1}}\leq\sqrt{C_0}q_n^{\frac{1+a^d}{2}}=
\sqrt{C_0}q_n^{a^{d-1}-\bar{a}/2}\leq \sqrt{\frac{C_0}{q_n^{\bar{a}}}}|m|
\leq\frac{|m|}{8},\] whenever $q_n^{\bar{a}}\geq 64 C_0$.
Moreover, by (\ref{m1}), $\frac{|m|}{2q_{n+1}}\leq
|m|\|q_n\alpha\|<1/4$ and hence
\[\pm2\|q_nx\|+|m|\|q_n\alpha\|\geq |m|\|q_n\alpha\|-2\|q_nx\|
\geq\frac{|m|}{2q_{n+1}}-2\frac{\sqrt{q_nq_{n+1}}}{q_{n+1}}\geq \frac{|m|}{4q_{n+1}}.\]
Similarly
\[\pm2\|q_nx\|+|m|\|q_n\alpha\|\leq |m|\|q_n\alpha\|+
2\|q_nx\|\leq\frac{|m|}{q_{n+1}}+2\frac{\sqrt{q_nq_{n+1}}}
{q_{n+1}}\leq\frac{2|m|}{q_{n+1}}\leq\frac{1}{2}.\]
By (\ref{sindul}), it follows that
\[\sin\pi |m|\|q_n\alpha\|\geq 2 |m|\|q_n\alpha\|\geq\frac{|m|}{q_{n+1}}\]
and
\[\sin\pi(\pm2\|q_nx\|+|m|\|q_n\alpha\|)\geq 2(\pm2\|q_nx\|+|m|\|q_n\alpha\|)
\geq\frac{|m|}{2q_{n+1}}.\]
Thus
\begin{eqnarray*}
f_n(x+m\alpha)-f_n(x)
&=&\frac{2q_{n+1}}{q_{n}^{1+\bar{a}-\vep}}\sin\pi (\pm2\|q_nx\|+
|m|\|q_n\alpha\|)\sin\pi |m|\|q_n\alpha\| \\&\geq&
\frac{m^2}{q_{n}^{1+\bar{a}-\vep}q_{n+1}} \geq
\frac{q_n^{2a^{d-1}}}{C_0q_{n}^{1+\bar{a}-\vep}q^{a^d}_{n}}=\frac{q_n^{\vep}}{C_0}
\geq\frac{q_{n+1}^{\vep/a^d}}{C_0^{1+\frac{\vep}{a^d}}}
\geq\frac{|m|^{\vep/a^d}}{C_0^{1+\frac{\vep}{a^d}}}.
\end{eqnarray*}
In view of the proof of (\ref{ujem}), we conclude that
\begin{equation*}\va^{(m)}(x)=f_n(x+m\alpha)-f_n(x)+\sum_{l\neq
n}^\infty(f_l(x+m\alpha)-f_l(x))\geq
\frac{|m|^{\vep/a^d}}{C_0^{1+\frac{\vep}{a^d}}}-K.
\end{equation*}
\end{proof}

Set $C=2\lceil 4^{a+2}\rceil$. Let $\alpha_1,\ldots,\alpha_d$ be
irrational numbers. Let $(q_n^{(j)})_{n=1}^{\infty}$ stand for the
sequence of denominators of $\alpha_j$ for $j=1,\ldots,d$. Assume
that the denominators of  $\alpha_1,\ldots,\alpha_d$ satisfy the
following inequalities
\begin{equation}\label{zalalpha}
4(q_n^{(j)})^a\leq q_n^{(j+1)}\leq C(q_n^{(j)})^a\text{ for all
}1\leq j\leq d\text{ and }n\geq 1,
\end{equation}
in which we use the notation $q_n^{(d+1)}=q_{n+1}^{(1)}$ and
$q_n^{(0)}=q_{n-1}^{(d)}$. It is easy to see that
\begin{equation}\label{zgor1}
4(q_n^{(j)})^{a^{d-1}}\leq
4^{1+\ldots+a^{d-2}}(q_n^{(j)})^{a^{d-1}}\leq q_{n+1}^{(j-1)}
\end{equation}
and
\begin{equation}\label{zdol1}
4(q_n^{(j)})^{a^d}\leq q^{(j)}_{n+1}\leq
C^{1+a+\ldots+a^{d-1}}(q_n^{(j)})^{a^d}=C_a(q_n^{(j)})^{a^d}
\end{equation}
for all $1\leq j\leq d$ and $n\in\N$.

\begin{Remark} Denote by $\mathfrak{C}(a)$ the set of all
$(\alpha_1,\ldots,\alpha_d)\in\T^d$ satisfying (\ref{zalalpha}).
In Appendix~\ref{dodatek}  we will show that for uncountably many
$(\alpha_1,\ldots,\alpha_d)\in\mathfrak{C}(a)$ the rotation on
$\T^d$ by the vector $(\alpha_1,\ldots,\alpha_d)$ is minimal.
\end{Remark}

Take $(\alpha_1,\ldots,\alpha_d)\in\mathfrak{C}(a)$ such that the
rotation $T:\T^d\to\T^d$ given by
$$T(x_1,\ldots,x_d)=(x_1+\alpha_1,\ldots,x_d+\alpha_d)$$  is
minimal. Let $0<\vep<\bar{a}$ and set
$\va_j(x)=\sum_{n=1}^{\infty}(f_{n,j}(x+\alpha_j)-f_{n,j}(x))$,
where
\[f_{n,j}(x)=\frac{q_{n+1}^{(j)}}{(q_n^{(j)})^{1+\bar{a}-\vep}}(1-\cos 2\pi
q_n^{(j)}x)\text{ for }j=1,\ldots,d.\] Let us consider  the
function $\va:\T^d\to\R$,
\[\va(x_1,\ldots,x_d)=\va_1(x_1)+\ldots+\va_d(x_d).\]
For $1\leq j\leq d$ set
\[F^{(j)}=\bigcap_{n=1}^{\infty}\bigcup_{l=0}^{q^{(j)}_n-1}
\left(\frac{l}{q^{(j)}_n}+\left[-\frac{1}
{\sqrt{q^{(j)}_{n}q^{(j)}_{n+1}}},\frac{1}
{\sqrt{q^{(j)}_{n}q^{(j)}_{n+1}}}\right]\right).\]

\begin{Th}
The function $\va:\T^d\to\R$ is of class
$C^{\lceil\bar{a}-\vep\rceil}$ and
\[\va^{(m)}(x_1,\ldots,x_d)\to+\infty\text{ as }|m|\to+\infty\]
for each $(x_1,\ldots,x_d)\in F^{(1)}\times\ldots\times F^{(d)}$.
\end{Th}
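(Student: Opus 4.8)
The smoothness assertion is immediate: each $\va_j$ is of class $C^{\lceil\bar a-\vep\rceil}$ by exactly the computation in the proof of Lemma~\ref{lempoje2} (applied with the single-rotation data $q_n^{(j)}$ in place of $q_n$), and the inequalities (\ref{zdol1}) guarantee $q_n^{(j)}\geq 4^n$, so the relevant series of derivatives converge; since $\va$ is a finite sum $\va_1(x_1)+\ldots+\va_d(x_d)$, it inherits the regularity. So the whole content is the divergence of the Birkhoff sums on the product Cantor set.

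\textbf{Reduction to the one-dimensional estimate.} Fix $(x_1,\ldots,x_d)\in F^{(1)}\times\ldots\times F^{(d)}$. Since $\va^{(m)}(x_1,\ldots,x_d)=\sum_{j=1}^d \va_j^{(m)}(x_j)$, the plan is to apply Lemma~\ref{lempoje2} separately in each coordinate $j$. For this I must check that the hypotheses of that lemma hold with $\alpha$ replaced by $\alpha_j$ and $q_n$ by $q_n^{(j)}$: the two-sided bound $4(q_n^{(j)})^{a^d}\leq q_{n+1}^{(j)}\leq C_a(q_n^{(j)})^{a^d}$ is precisely (\ref{zdol1}), which is the analogue of (\ref{wieloqn}) with constant $C_0=C_a$; and $x_j\in F^{(j)}$ is the analogue of $x\in F$. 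Hence Lemma~\ref{lempoje2} gives a uniform lower bound $\va_j^{(m)}(x_j)\geq -K$ for all $m\in\Z$ (with $K=K(a,\vep)$ independent of $j$, since it depends only on $a$ and $\vep$), and moreover $\va_j^{(m)}(x_j)\geq \theta_j|m|^{\vep/a^d}-K$ whenever $(q_n^{(j)})^{a^{d-1}}\leq|m|\leq\frac14 q_{n+1}^{(j)}$ and $(q_n^{(j)})^{\bar a}\geq 64C_a$ for some $n$, where $\theta_j=\theta(C_a,d,a,\vep)>0$ can again be taken independent of $j$.

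\textbf{Covering every large $m$ by a ``productive'' coordinate.} Summing the trivial bounds over all coordinates yields $\va^{(m)}(x_1,\ldots,x_d)\geq -dK$ for every $m$; the point is to show that, as $|m|\to+\infty$, for at least one index $j$ the point $|m|$ falls into a window $[(q_n^{(j)})^{a^{d-1}},\tfrac14 q_{n+1}^{(j)}]$, which then contributes $\theta_j|m|^{\vep/a^d}$ and dominates. Here is where the interlacing hypothesis (\ref{zalalpha}) does the work. By (\ref{zgor1}) we have $4(q_n^{(j)})^{a^{d-1}}\leq q_{n+1}^{(j-1)}$ and by (\ref{zdol1}) we have $q^{(j-1)}_{n+1}\leq C_a(q_{n}^{(j-1)})^{a^{d}}$... more usefully, chaining (\ref{zalalpha}) around the cycle $j=1,\ldots,d,d+1\equiv 1$ shows the combined sequence
\[
\ldots\ <\ q_n^{(1)}\ <\ q_n^{(2)}\ <\ \cdots\ <\ q_n^{(d)}\ <\ q_{n+1}^{(1)}\ <\ \cdots
\]
has consecutive ratios each at least a fixed power of the previous term, and consecutive terms differ by the exponent factor $a$; consequently the windows $\bigl[(q_n^{(j)})^{a^{d-1}},\tfrac14 q_{n+1}^{(j)}\bigr]$, as $(n,j)$ ranges over all pairs, cover all sufficiently large integers. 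Concretely, given large $|m|$, pick the pair $(n,j)$ with $q_n^{(j)}\leq |m|< q_{n}^{(j+1)}$ (reading $q_n^{(d+1)}=q_{n+1}^{(1)}$); then $|m|\geq q_n^{(j)}\geq (q_n^{(j)})^{a^{d-1}}$ fails in general, so instead one chooses the pair so that $(q_n^{(j)})^{a^{d-1}}\leq |m|$, which by (\ref{zgor1}) is implied by $q_{n+1}^{(j-1)}\leq 4|m|$, and $|m|\leq\tfrac14 q_{n+1}^{(j)}$; the interlacing exponents guarantee that for $|m|$ large there is always such a pair because the gap between $(q_n^{(j)})^{a^{d-1}}$ and $\tfrac14 q_{n+1}^{(j)}$ is enormous (a factor of order $(q_n^{(j)})^{a^d-a^{d-1}}=(q_n^{(j)})^{a^{d-1}(a-1)}$) and these intervals overlap as $(n,j)$ increases by one step. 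One also checks $(q_n^{(j)})^{\bar a}\geq 64C_a$ holds once $n$ is large enough, i.e. for $|m|$ large. For that coordinate $j$ we then get $\va_j^{(m)}(x_j)\geq \theta_j|m|^{\vep/a^d}-K$, and combined with $\va_i^{(m)}(x_i)\geq -K$ for the remaining $d-1$ coordinates,
\[
\va^{(m)}(x_1,\ldots,x_d)\ \geq\ \theta_j|m|^{\vep/a^d}-dK\ \longrightarrow\ +\infty.
\]

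\textbf{Main obstacle.} The smoothness and the one-coordinate estimates are direct citations of Lemma~\ref{lempoje2}; the genuine work is the combinatorial covering argument of the previous paragraph --- verifying that the union over $(n,j)$ of the intervals $\bigl[(q_n^{(j)})^{a^{d-1}},\tfrac14 q_{n+1}^{(j)}\bigr]$ is cofinite in $\N$. This is exactly why the cyclic Diophantine chain (\ref{zalalpha})–(\ref{zdol1}) is imposed: each coordinate ``blows up'' on its own scale and the scales are woven together finely enough that no integer $|m|$ escapes all the good windows, yet coarsely enough ($q_{n+1}^{(j)}$ at least four times the square-root denominator) that the negative contributions stay bounded by $K$. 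I would isolate this as the one careful lemma and let the rest follow formally.
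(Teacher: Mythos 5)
Your proposal is correct and follows essentially the same route as the paper: apply Lemma~\ref{lempoje2} coordinatewise with $C_0=C_a$ (justified by (\ref{zdol1})), use the uniform bound $\va_k^{(m)}(x_k)\geq -K$ in the non-productive coordinates, and for each large $|m|$ locate a pair $(n,j)$ with $\frac14 q_{n+1}^{(j-1)}\leq |m|\leq \frac14 q_{n+1}^{(j)}$, which via (\ref{zgor1}) places $|m|$ in the window $[(q_n^{(j)})^{a^{d-1}},\frac14 q_{n+1}^{(j)}]$ and yields the growing term $\theta|m|^{\vep/a^d}$. This is exactly the paper's argument, with your covering discussion being a slightly more verbose version of the paper's one-line choice of $(n,j)$.
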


\begin{proof}
For each integer $m$ with $|m|\geq q_1^{(d)}/4$ there exist
$n\in\N$ and $1\leq j\leq d$ such that
$\frac{1}{4}q_{n+1}^{(j-1)}\leq |m|\leq \frac{1}{4}q_{n+1}^{(j)}$.
Note that if $|m|\to+\infty$ then $n\to+\infty$, so
$(q_{n}^{(j)})^{\overline{a}}\geq 64 C_a$ whenever $|m|$ is large
enough. From~(\ref{zgor1}),
\[(q_n^{(j)})^{a^{d-1}}\leq\frac{1}{4}q_{n+1}^{(j-1)}
\leq|m|\leq \frac{1}{4}q_{n+1}^{(j)}.\]
By (\ref{zdol1}) and Lemma~\ref{lempoje2}, for every
$(x_1,\ldots,x_d)\in F^{(1)}\times\ldots\times F^{(d)}$ we have
\begin{eqnarray*}\va^{(m)}(x_1,\ldots,x_d)&=&
\sum_{k=1}^d\va_k^{(m)}(x_k)\geq\va_j^{(m)}(x_j)-(d-1)K(a,\vep)\\
&\geq&\theta(C_a,d,a,\vep)|m|^{\vep/a^d}-dK(a,\vep).
\end{eqnarray*}
Consequently,
\[\va^{(m)}(x_1,\ldots,x_d)\to+\infty\text{ as }|m|\to+\infty.\]
\end{proof}

\begin{Prop} For every $1\leq j\leq d$ we have $\dim_HF^{(j)}\geq
\frac{1}{1+a^d}$, in particular
$\dim_H(F^{(1)}\times\ldots\times F^{(d)})\geq \frac{d}{1+a^d}$.
\end{Prop}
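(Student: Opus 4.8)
The plan is to establish the lower bound $\dim_H F^{(j)}\geq \frac{1}{1+a^d}$ for each fixed $j$ by applying the same mass-distribution-style criterion, Proposition~\ref{propfal}, that was used in Section~\ref{secdimension}; the product bound then follows from the general fact that Hausdorff dimension of a product is at least the sum of the Hausdorff dimensions of the factors (see \cite{Fal}). So the real work is entirely in the single-factor estimate, and it is a routine Cantor-set computation once the combinatorial data $m_n$, $\vep_n$ are read off from the definition of $F^{(j)}$.

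First I would fix $j$ and write $E^{(j)}_n=\bigcup_{l=0}^{q^{(j)}_n-1}\left(\frac{l}{q^{(j)}_n}+\left[-\frac{1}{\sqrt{q^{(j)}_{n}q^{(j)}_{n+1}}},\frac{1}{\sqrt{q^{(j)}_{n}q^{(j)}_{n+1}}}\right]\right)$, so that $F^{(j)}=\bigcap_n E^{(j)}_n$ and the $n$-th level basic intervals have common length $\ell_n=2/\sqrt{q^{(j)}_nq^{(j)}_{n+1}}$ and are spaced $1/q^{(j)}_n$ apart. I would then invoke Remark~\ref{liczint} with $A$ an $n$-th level interval (length $\ell_n$), $B$ an $(n-1)$-th level interval (length $\ell_{n-1}$) and $h=1/q^{(j)}_n$: using $\ell_{n-1}\geq 4h$ (which follows from $q^{(j)}_n\geq 4(q^{(j)}_{n-1})^{a^d}$ via (\ref{zdol1}), since $q^{(j)}_n/(q^{(j)}_{n-1}q^{(j)}_n)$ dominates) one gets $m_n\geq \ell_{n-1}/(4h)=\tfrac14 q^{(j)}_n/\sqrt{q^{(j)}_{n-1}q^{(j)}_n}=\tfrac14\sqrt{q^{(j)}_n/q^{(j)}_{n-1}}$. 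The gap length satisfies $\vep_n=1/q^{(j)}_n-\ell_n\geq 1/(2q^{(j)}_n)$ once $n$ is large, and I would take $\vep_n$ itself to be exactly this quantity so that the monotonicity $\vep_n\geq\vep_{n+1}$ required by Proposition~\ref{propfal} holds (the $q^{(j)}_n$ are increasing).

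Next I would plug these into the liminf formula. One has $m_n\vep_n\geq \tfrac14\sqrt{q^{(j)}_n/q^{(j)}_{n-1}}\cdot\tfrac1{2q^{(j)}_n}=\tfrac18\big(q^{(j)}_{n-1}q^{(j)}_n\big)^{-1/2}$, hence $-\log(m_n\vep_n)\leq \tfrac12\log q^{(j)}_{n-1}+\tfrac12\log q^{(j)}_n+\log 8$, and since $q^{(j)}_n\leq C_a(q^{(j)}_{n-1})^{a^d}$ by (\ref{zdol1}) this is at most $\tfrac12(1+a^d)\log q^{(j)}_{n-1}+O(1)$. On the other hand $\log(m_1\cdots m_{n-1})\geq \log m_{n-1}\geq \tfrac12\log q^{(j)}_{n-1}-\tfrac12\log q^{(j)}_{n-2}-O(1)\geq \tfrac12(1-1/a^d)\log q^{(j)}_{n-1}-O(1)$ using $q^{(j)}_{n-1}\geq 4(q^{(j)}_{n-2})^{a^d}$ once more, so actually I should be slightly more careful and use the product over all of $m_1,\ldots,m_{n-1}$ rather than just the last term, or simply note $\log q^{(j)}_{n-2}=O\big(\log q^{(j)}_{n-1}\big)$ only up to a factor and instead telescope: $\log(m_1\cdots m_{n-1})\geq \tfrac12\log(q^{(j)}_{n-1}/q^{(j)}_0)-O(n)$, and since $\log q^{(j)}_{n-1}$ grows at least geometrically the $O(n)$ term is negligible. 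Either way the ratio tends in the liminf to $\frac{1/2}{(1+a^d)/2}=\frac{1}{1+a^d}$, giving $\dim_H F^{(j)}\geq\frac1{1+a^d}$.

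Finally, for the product statement, I would apply the product inequality for Hausdorff dimension: $\dim_H(F^{(1)}\times\cdots\times F^{(d)})\geq \dim_H F^{(1)}+\cdots+\dim_H F^{(d)}\geq \frac{d}{1+a^d}$ (this lower bound for products of arbitrary sets is standard; see e.g.\ \cite{Fal}). The main obstacle I anticipate is not any single estimate but making sure the telescoping of $\log(m_1\cdots m_{n-1})$ is handled cleanly: one must use the super-geometric growth of $(q^{(j)}_n)$ coming from (\ref{zdol1}) to absorb the additive error terms ($\log 16$, $\log C_a$, and the $O(n)$ from multiplying $n-1$ factors) into the leading $\log q^{(j)}_{n-1}$, exactly as in the proof of Theorem~\ref{wymhaus}. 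Once that bookkeeping is done the constants $C_a$, $\theta$, $K$ play no role in the dimension computation, which depends only on the exponent $a^d$ relating consecutive denominators.
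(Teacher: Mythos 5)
Your proposal is correct and follows essentially the same route as the paper: the paper also applies Proposition~\ref{propfal} with Remark~\ref{liczint} to the levels of $F^{(j)}$, getting $m_n\geq\sqrt{q^{(j)}_n}/(2\sqrt{q^{(j)}_{n-1}})$ and $\vep_n\geq 1/(2q^{(j)}_n)$, telescopes to $m_1\cdots m_{n-1}\geq\sqrt{q^{(j)}_{n-1}}/2^{n-1}$, absorbs the $O(n)$ term using the super-geometric growth $\log q^{(j)}_n\geq a^{d(n-1)}\log 4$ from (\ref{zdol1}), and concludes with the same product inequality for Hausdorff dimension. The only differences are harmless constant factors.
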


\begin{proof}
Fix $1\leq j\leq d$ and we will write $q_n$ instead of
$q_n^{(j)}$. Note that to calculate the Hausdorff dimension of
$F^{(j)}$ we can use the scheme presented at the beginning of
Section~\ref{secdimension} and Remark~\ref{liczint} in which
\[a=\frac{2}{\sqrt{q_nq_{n+1}}},\;\;b=\frac{2}{\sqrt{q_{n-1}q_{n}}}\;\;
\text{ and }\;\;h=\frac{1}{q_n}.\] Here
\[\vep_n=\frac{1}{q_n}-\frac{2}{\sqrt{q_nq_{n+1}}}\geq\frac{1}{2q_n}\;\;\text{ and
}\;\;
m_n\geq\frac{\frac{2}{\sqrt{q_{n-1}q_n}}}{4\frac{1}{q_n}}=
\frac{\sqrt{q_n}}{2\sqrt{q_{n-1}}}.\]
Thus
\[m_1\ldots m_{n-1}\geq\frac{\sqrt{q_{n-1}}}{2^{n-1}}\] and using (\ref{zdol1})
\[m_n\vep_n\geq\frac{1}{4\sqrt{q_{n-1}q_n}}\geq
\frac{1}{4\sqrt{C_a}q_{n-1}^{\frac{1+a^d}{2}}}.\] Moreover,
$q_{n+1}\geq q_n^{a^d}$, and hence (reminding that $q_1\geq 4
$)\[\log q_n\geq\log q_1^{a^{d(n-1)}}\geq a^{d(n-1)}\log 4.\] By
Proposition~\ref{propfal}, it follows that
\[\dim_H{F^{(j)}}\geq\liminf_{n\to\infty}\frac{\frac{1}{2}
\log q_{n-1}-(n-1)\log 2}{\frac{1+a^d}{2}\log q_{n-1}+\log 4+
\log\sqrt{C_a}}=\frac{1}{1+a^d}.\]
Since
\[\dim_H(F^{(1)}\times\ldots\times F^{(d)})\geq\dim_H{F^{(1)}}+
\ldots+\dim_H{F^{(d)}},\]
the proof is complete.
\end{proof}

\begin{Remark}For every $d\geq 3$  in order to obtain the maximum of
$\bar{a}$ we have to  choose of $a=2(d-1)/d$. Then
\[\bar{a}=\frac{2^d}{d}\left(\frac{d-1}{d}\right)^{d-1}-1>\frac{2^d}{ed}-1.\]
In particular, for $d=3$ we have
$\bar{a}={8}/{3}\cdot\left({2}/{3}\right)^{2}=32/27<2$, so we can
only choose $\vep>0$ so that $\lceil\bar{a}-\vep\rceil=1$.
However, if $d\to+\infty$ then the degree of smoothness of
$\varphi$ grows exponentially.
\end{Remark}

\section{Differential equations and flows}\label{difeq}
Consider now a system of differential equations on
$\T^d\times\T\times\R$
\begin{equation}\label{system}
\begin{array}{lll}
\frac{d\ov{x}}{dt} &= &\ov{\alpha}\\
\frac{dz}{dt} &=&1\\
\frac{dy}{dt} &=& f(\ov{x},z),
\end{array}\end{equation}
where $\ov{\alpha}=(\alpha_1,\ldots,\alpha_d)$ induces a minimal
rotation $T$ on $\T^d$ and $f:\T^d\times\T\to\R$ is of class $C^r$
for some $r\geq0$. Denote by $(\Phi_t)_{t\in\R}$ the corresponding
flow on $\T^{d+1}\times\R$
$$
\Phi_t(\ov{x},z,y)=\left(x+t\ov{\alpha},z+t,\int_0^tf(\ov{x}+s\ov{\alpha},z+s)\,ds\right).$$
Then $\T^d\times\{0\}\times\R=\T^d\times\R$ is a global section
for $(\Phi_t)_{t\in\R}$ and the Poincar\'e map is given by the
formula
$$
(\ov{x},y)\mapsto (\ov{x}+\ov{\alpha},y+\va(\ov{x})),$$ where
\begin{equation}\label{system1}
\va(\ov{x})=\int_0^1 f(\ov{x}+s\ov{\alpha},s)\,ds,\end{equation}
and hence $\va:\T^d\to\R$ is also of $C^r$ class.  Reciprocally,
if $\va:\T^d\to\R$ is of $C^r$ class then we can find
$f:\T^d\times\T\to\R$ which is of $C^r$ class so
that~(\ref{system1}) holds (for example
$f(\overline{x},z)=\va(\ov{x}-z\ov{\alpha})b(z)$ will do provided
$b:[0,1]\to\R$ is smooth, $\int_0^1b(t)\,dt=1$ and
$b|_{[0,\eta]}=0=b|_{[1-\eta,1]}$ for some $0<\eta<1/2$).

Now the flow $(\Phi_t)_{t\in\R}$ is topologically the same as the
suspension flow over $T_\va$. In particular, closed orbits of
$(\Phi_t)_{t\in\R}$ which we may identify with closed orbits of
the suspension flow are in a natural correspondence with closed
orbits of $T_\va$. More generally,  each minimal subset for the
suspension flow is of the form $M\times[0,1)$ where
$M\subset\T^d\times\R$ is a minimal subset for $T_\va$. We will
now make use of our knowledge about properties of $T_\va$ to
derive properties of $(\Phi_t)_{t\in\R}$.

First of all, we note that $(\Phi_t)_{t\in\R}$ is never minimal.
Then, note that
\begin{equation}\label{calka}
\int_{\T^d}
\va(\ov{x})\,d\ov{x}=\int_{\T^d\times\T}f(\ov{x},z)\,d\ov{x}dz,
\end{equation}
so we should constantly assume that the latter integral vanishes;
otherwise $\T^d\times\T\times\R$ is foliated into closed orbits of
$(\Phi_t)_{t\in\R}$ (each orbit being homeomorphic to $\R$). When
the integral vanishes and $\va(\ov{x})=j(\ov{x})-j(T\ov{x})$ for a
continuous $j:\T^d\to\R$ then again $\T^d\times\T\times\R$ is
foliated into minimal components of $(\Phi_t)_{t\in\R}$, however
now each minimal component is compact. This situation is
equivalent to saying that there exists an orbit of
$(\Phi_t)_{t\in\R}$ which is relatively compact (and then all
orbits are relatively compact). Finally, if the integral
(\ref{calka}) vanishes and no orbit of $(\Phi_t)_{t\in\R}$ is
relatively compact then $(\Phi_t)_{t\in\R}$ is topologically
transitive, that is, there is a dense orbit (and since it is not
minimal there are orbits which are not dense). A natural question
arises to decide whether in the transitive and smooth ($r\geq 1$)
case closed orbits can exist. The answer is negative if $d=1$
 (in fact, for $d=1$ the flow corresponding
to~(\ref{system}) has no minimal subset). We can now interpret the
results of Section~\ref{highdim} as the positive answer to the
question in case $d\geq3$ (although with some restriction on the
degree of smoothness of the vector field in~(\ref{system})).

\subsection{Differential equations on $\R^3$}\label{sectr3} For any
irrational number $\alpha>0$ let us consider the system of
differential equations on $\R^3$
\begin{align}\label{rowr3bezper}
\begin{split}
x'&=-2\pi y+2\pi \alpha xz\\
y'&=2\pi x+2\pi \alpha yz\\
z'&=\pi \alpha(1-x^2-y^2+z^2).
\end{split}
\end{align}
It is easily checked that
$\ln((\sqrt{x^2+y^2}+1)^2+z^2)-\ln((\sqrt{x^2+y^2}-1)^2+z^2)$ is a
first integral of (\ref{rowr3bezper}). For $a>0$ the set of
$(x,y,z)\in\R^3$ satisfying
\begin{equation}\label{torusy}
(\sqrt{x^2+y^2}+1)^2+z^2=a((\sqrt{x^2+y^2}-1)^2+z^2),
\end{equation}
is a torus; indeed, by setting $r=\sqrt{x^2+y^2}$, (\ref{torusy})
is equivalent to
\[\left(r-\frac{a+1}{a-1}\right)^2+z^2=\frac{4a}{(a-1)^2}.\]
It follows that the corresponding family of tori establishes an
invariant foliation of $\R^3\setminus (\s_0\cup\R_0)$, where
\[\s_0=\{(x,y,z):x^2+y^2=1,z=0\}\text{ and }\R_0=\{(x,y,z):x=y=0\}.\]
Moreover, the flow corresponding to (\ref{rowr3bezper}) acts on
each invariant torus as the linear flow in the direction
$(\alpha,1)$ -- we will see a relevant computation in a perturbed
situation in a while.

 The aim of this section is to give (for every
irrational $\alpha$) a continuous perturbation of
(\ref{rowr3bezper}) which completely destroys its integrable
dynamics.  We will show that under some special continuous
perturbation of (\ref{rowr3bezper})
 the resulting systems  have plenty of orbits
which are dense, homoclinic and heteroclinic to limit cycles. The
class of considered perturbations is in the spirit of Chapter~XIX
in \cite{Po}.

 Let $\psi:\T^2\to\R$ be a continuous
function. Set
\[\omega(x,y,z)=\frac{\arg(x^2+y^2+z^2-1-2zi)}{2\pi}\text{ and }\theta(x,y)=\frac{\arg(x+yi)}{2\pi}\]
for every $\R^3\setminus(\s_0\cup\R_0)$ and  denote by
$F:\R^3\setminus(\s_0\cup\R_0)\to\R$ the continuous function
\[F(x,y,z)=\ln\sqrt{\frac{(\sqrt{x^2+y^2}+1)^2+z^2}{(\sqrt{x^2+y^2}-1)^2+z^2}}\cdot\psi\left(\omega(x,y,z),\theta(x,y)\right).\]
We will deal with the perturbed  differential equation
\begin{align}\label{rowr3}
\begin{split}
x'&=-2\pi y+2\pi \alpha xz+
\frac{(1-x^2-y^2+z^2)x}{2\sqrt{x^2+y^2}}F(x,y,z)\\
y'&=2\pi x+2\pi \alpha yz+
\frac{(1-x^2-y^2+z^2)y}{2\sqrt{x^2+y^2}}F(x,y,z)\\
z'&=\pi \alpha(1-x^2-y^2+z^2)-z\sqrt{x^2+y^2}F(x,y,z).
\end{split}
\end{align} It is easily checked that the right hand side of
(\ref{rowr3}) can be continuously extended to $\R^3$ (indeed,
since $|1-r^2+z^2|\leq 3\sqrt{(r-1)^2+z^2}$ for $|r-1|$ and $|z|$
sufficiently small, $(1-r^2+z^2)\ln\sqrt{(r-1)^2+z^2}\to 0$ as
$r\to 1$, $z\to 0$ and therefore on  $\s_0\cup\R_0$ we come back
to  (\ref{rowr3bezper})), so the equation (\ref{rowr3}) is well
defined on $\R^3$. The existence and uniqueness of solutions
(\ref{rowr3}) we will prove later by applying a change of
coordinates (solutions are defined for all $t\in\R$, except for
the line $\R_0$). Denote by $(\Psi_t)_{t\in\R}$ the flow
corresponding to (\ref{rowr3}). We have already noticed that
$x'=-2\pi y,\;y'=2\pi x,z'=0$ on $\s_0$ and $x'=0,\;y'=0,z'=\pi
\alpha(1+z^2)$ on $\R_0$. Therefore $\s_0$ is
$(\Psi_t)_{t\in\R}$-invariant  and
$\Psi_t(x,y,0)=\Psi_t(x+iy,0)=(e^{2\pi it}(x+iy),0)$ on $\s_0$.
Moreover, $(\Psi_t)_{t\in\R}$ is a local flow on $\R_0$ and
\[\Psi_t(0,0,z)=(0,0,\tan(\pi\alpha t+\arctan z))\text{ for }t\in \left(-\frac{1}{2\alpha},\frac{1}{2\alpha}\right)-\frac{\arctan z}{\pi\alpha}.\]

We will show that $(\Psi_t)_{t\in\R}$ acts on
$\R^3\setminus(\s_0\cup\R_0)$ indeed as a flow. We will use a
hyperbolic polar coordinates on the hyperbolic half-plane
$\{(z,r):r>0,z\in\R\}$ given by
\[z+ir\mapsto\frac{i(z+ir)+1}{(z+ir)+i}=ie^{-e^s}e^{2\pi i\omega}\]
together with the usual polar coordinates $x=r\cos2\pi\theta$, $
y=r\sin2\pi\theta$. It results in  toral coordinates
$(\omega,\theta,s)\in\T\times\T\times\R$ of
$\R^3\setminus(\s_0\cup\R_0)$ given by
\begin{align*}
x&=\left(\frac{-2(e^{-e^s}\cos2\pi\omega-1)}{(e^{-e^s}\sin2\pi\omega)^2+(e^{-e^s}\cos2\pi\omega-1)^2}-1\right)\cos2\pi\theta,\\
y&=\left(\frac{-2(e^{-e^s}\cos2\pi\omega-1)}{(e^{-e^s}\sin2\pi\omega)^2+(e^{-e^s}\cos2\pi\omega-1)^2}-1\right)\sin2\pi\theta,\\
z&=\frac{-2e^{-e^s}\sin2\pi\omega}{(e^{-e^s}\sin2\pi\omega)^2+(e^{-e^s}\cos2\pi\omega-1)^2}.\\
\end{align*}
Denote by $\Upsilon:\T\times\T\times\R\to
\R^3\setminus(\s_0\cup\R_0)$ the map establishing the change of
coordinates. The inverse change of coordinates is given by
\begin{align*}
2\pi\omega&=\arg(x^2+y^2+z^2-1-2zi),\\
2\pi\theta&=\arg(x+yi),\\
s&=\ln\ln\sqrt{\frac{(\sqrt{x^2+y^2}+1)^2+z^2}{(\sqrt{x^2+y^2}
-1)^2+z^2}};
\end{align*}
indeed,
\[e^{-e^s}=\left|\frac{i(z+ir)+1}{(z+ir)+i}\right|=\sqrt{\frac{(r-1)^2+z^2}{(r+1)^2+z^2}},\]
\[2\pi\omega=\arg\frac{z+i(r-1)}{z+(r+1)}=\arg(r^2+z^2-1-2zi).\]
 Setting additionally $u=e^s$  we have
\begin{align}
x'&=-2\pi y+2\pi \alpha xz+\frac{(1-r^2+z^2)x}{2r}u
\psi\left(\omega,\theta\right),\notag\\
y'&=2\pi x+2\pi \alpha yz+
\frac{(1-r^2+z^2)y}{2r}u\psi\left(\omega,\theta\right),\notag\\
\label{pochz}z'&=\pi\alpha(1-r^2+z^2)-zru\psi\left(\omega,\theta\right).
\end{align}
It follows that
\begin{equation}\label{pochr}
r'=\frac{xx'+yy'}{r}=2\pi\alpha rz+\frac{1-r^2+z^2}{2}u
\psi\left(\omega,\theta\right),
\end{equation}
and
\[xy'-yx'=2\pi(x^2+y^2).\]
Since $x=r\cos2\pi\theta$, $y=r\sin2\pi\theta$, we have
\begin{equation}\label{pochtheta}
2\pi\theta'= \frac{x}{r}\left(\frac{y}{r}\right)'-
\frac{y}{r}\left(\frac{x}{r}\right)'
=\frac{xy'-yx'}{x^2+y^2}=2\pi.
\end{equation}
Next note that
\[|x^2+y^2+z^2-1-2zi|^2=(r^2+z^2-1)^2+(2z)^2=((r+1)^2+z^2)((r-1)^2+z^2),\]
and hence
$$r^2+z^2-1=\sqrt{((r+1)^2+z^2)((r-1)^2+z^2)}\cos2\pi\omega,$$
$$-2z=\sqrt{((r+1)^2+z^2)((r-1)^2+z^2)}\sin2\pi\omega.$$ It follows
that (similarly as in (\ref{pochtheta}))
\[2\pi\omega'=\frac{(-2z)'(r^2+z^2-1)-(r^2+z^2-1)'(-2z)}{((r+1)^2+z^2)((r-1)^2+z^2)}.\]
Moreover, by (\ref{pochz}) and (\ref{pochr}),
\begin{multline*}
(r^2+z^2-1)'2z-(2z)'(r^2+z^2-1)=2(2zrr'+(1-r^2+z^2)z')\\
=2\pi\alpha(4
z^2r^2+(1-r^2+z^2)^2)=2\pi\alpha((r+1)^2+z^2)((r-1)^2+z^2),
\end{multline*}
so $2\pi\omega'=2\pi\alpha$. Since
$u=\frac{1}{2}\left(\ln((r+1)^2+z^2)-\ln((r-1)^2+z^2)\right)$, we
have
\[u'=\frac{r'(r+1)+z'z}{(r+1)^2+z^2}-\frac{r'(r-1)+z'z}{(r-1)^2+z^2}=
\frac{2(1-r^2+z^2)r'-4rzz'}{((r+1)^2+z^2)((r-1)^2+z^2)}.\]
Moreover, by (\ref{pochz}) and (\ref{pochr}),
\begin{align*}2(1-r^2+z^2)r'-4rzz'=&((z^2-r^2+1)^2+4(rz)^2)u\psi\left(\omega,\theta\right)\\=&
((r+1)^2+z^2)((r-1)^2+z^2)u\psi\left(\omega,\theta\right),\end{align*}
hence $u'=u\psi\left(\omega,\theta\right)$. Therefore
\[s'=u'/u=\psi\left(\omega,\theta\right),\]
hence in the new coordinates the  differential equation
(\ref{rowr3}) takes the form
\[\omega'=\alpha,\;\;\theta'=1,\;\;s'=\psi\left(\omega,\theta\right)\]
and we return to the scheme from  Section~\ref{difeq}. Denote by
$(\Phi_t)_{t\in\R}$ the corresponding flow on
$\T\times\T\times\R$, then
\[\Phi_t(\omega,\theta,s)=(\omega+t\alpha,\theta+t,s+\int_0^t\psi(\omega+\tau\alpha,\theta+\tau)d\tau).\]

Let $\alpha$ be an arbitrary irrational number and let
$\va:\T\to\R$ stand for the function constructed in
Section~\ref{varia}. Choose a continuous function $\psi:\T^2\to\R$
for which $\va(\omega)=\int_0^1\psi(\omega+\tau\alpha,\tau)d\tau$.
Assume that for $(\omega,\theta,s)\in\T\times\T\times\R$ we have
$\omega-\alpha\theta\in D^{s_-+}$. Then
$\va^{(n)}(\omega-\alpha\theta)\to+\infty$ as $n\to+\infty$, so
\begin{align*}s_n:&=s+\int_0^{n}\psi(\omega+\tau\alpha,\theta+\tau)d\tau\\
&=s+\va^{(n)}(\omega-\alpha\theta)+\int_0^{\theta}\left(\psi(\omega+(\tau+n-\theta)\alpha,\tau)-\psi(\omega+(\tau-\theta)\alpha,\tau)\right)d\tau\\
&\to+\infty.
\end{align*}
Note that if $(x,y,z)=\Upsilon(\omega,\theta,s)$ then
\begin{align*}
x+iy&=\frac{1-e^{-2e^s}}{e^{-2e^s}-2e^{-e^s}\cos2\pi\omega+1}e^{2\pi
i \theta},\;
&z=\frac{-2e^{-e^s}\sin2\pi\omega}{{e^{-2e^s}-2e^{-e^s}\cos2\pi\omega+1}}.\\
\end{align*}
Since $e^{-2e^{s_n}}\to 0$, setting $\omega_n=\omega+n\alpha$ we
obtain
\begin{align*}\Psi_n(x,y,z)&=\Psi_n\circ\Upsilon(\omega,\theta,s)=\Upsilon\circ\Phi_n(\omega,\theta,s)=\Upsilon(\omega_n,\theta+n,s_n)=\Upsilon(\omega_n,\theta,s_n)
\\&=\left(\frac{(1-e^{-2e^{s_n}})e^{2\pi
i\theta}}{e^{-2e^{s_n}}-2e^{-e^{s_n}} \cos 2\pi\omega_n+1},
\frac{-2e^{-e^{s_n}}\sin2\pi\omega_n}{{e^{-2e^{s_n}}-2e^{-e^{s_n}}
\cos2\pi\omega_n+1}}\right)\\ &\to(e^{2\pi i\theta},0)\in\s_0.
\end{align*}
It follows that the $\omega$--limit set of $(x,y,z)$ is equal to
$\s_0$.

Now assume that $\omega-\alpha\theta\in D^{s_--}$ and $\omega\neq
0$. Then
\begin{align*}s_n:&=s+\int_0^{n/\alpha}\psi(\omega+\tau\alpha,\theta+\tau)d\tau\\
&=s+\va^{(\lfloor\theta+
n/\alpha\rfloor)}(\omega-\alpha\theta)\\
&\quad+\int_0^{\{\theta+n/\alpha\}}\psi\left(\omega+(\tau-\theta+\lfloor\theta+
n/\alpha\rfloor)\alpha,\tau\right)d\tau-\int_0^{\theta}\psi\left(\omega+(\tau-\theta)\alpha,\tau\right)d\tau\\
&\to-\infty
\end{align*}
as $n\to+\infty$, so $e^{-2e^{s_n}}\to 1$. Setting
$\theta_n=\theta+n/\alpha$ we obtain
\begin{align*}\Psi_{n/\alpha}(x,y,z)
&=\Upsilon\circ\Phi_{n/\alpha}(\omega,\theta,s)=\Upsilon(\omega+n,\theta_n,s_n)=\Upsilon(\omega,\theta_n,s_n)\\
&=\left(\frac{(1-e^{-2e^{s_n}})e^{2\pi
i\theta_n}}{e^{-2e^{s_n}}-2e^{-e^{s_n}} \cos 2\pi\omega+1},
\frac{-2e^{-e^{s_n}}\sin2\pi\omega}{{e^{-2e^{s_n}}-2e^{-e^{s_n}}
\cos2\pi\omega+1}}\right)\\
&\to\left(0,\frac{-\sin2\pi\omega}{1-\cos2\pi\omega}\right)=(0,\tan(\pi\omega-\pi/2))\in\R_0.
\end{align*}It follows that the $\omega$--limit set of $(x,y,z)$ is equal to
$\R_0$.

Let $A_+:=\s_0$ and $A_-:=\R_0$. Similar arguments to those above
show that if $\omega-\alpha\theta\in D^{s_-s_+}$ then the
$\alpha$--limit set of $(x,y,z)$ is $A_{s_-}$. Recall that the set
$D^{s_-s_+}$ is invariant under the rotation by $\alpha$, so  it
is dense. Thus
\[\{\Upsilon(\omega,\theta,s):(\omega,\theta,s)\in\T\times\T\times\R,\,\omega-\alpha\theta\in D^{s_-s_+}\}\]
is dense in $\R^3$. Moreover, the Hausdorff dimension of this set
is no smaller than $\dim_HD_{s_-s_+}+2$. In view of
Theorem~\ref{wym12} we have the following.

\begin{Th}For every irrational $\alpha$ there exists a continuous
function $\psi:\T^2\to\R$ such that the flow $(\Psi_t)_{t\in\R}$
corresponding to (\ref{rowr3}) is transitive and for each
$s_-,s_+\in\{-,+\}$ the set of points such that the
$\omega$--limit set is equal to $A_{s_+}$ and the $\alpha$--limit
set is equal to $A_{s_-}$ is dense. Moreover, for almost every
$\alpha$ the Hausdorff dimension of each such set is no smaller
than $5/2$.
\end{Th}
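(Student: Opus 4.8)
The plan is to assemble the pieces that have already been established in the paper and to verify that the coordinate change $\Upsilon$ carries everything faithfully. First I would fix an arbitrary irrational $\alpha$ and take $\psi:\T^2\to\R$ so that $\va(\omega)=\int_0^1\psi(\omega+\tau\alpha,\tau)\,d\tau$, where $\va$ is the continuous zero-mean function produced in Section~\ref{varia}; by the reciprocal construction from Section~\ref{difeq} (the explicit formula $f(\overline{x},z)=\va(\overline{x}-z\overline{\alpha})b(z)$ with a bump $b$) such a $\psi$ exists and is continuous. Then in the toral coordinates $(\omega,\theta,s)$ the computation already carried out in Subsection~\ref{sectr3} shows that $(\Psi_t)$ is conjugate via $\Upsilon$ to the suspension flow $(\Phi_t)$ over $T_\va$, namely $\omega'=\alpha,\ \theta'=1,\ s'=\psi(\omega,\theta)$. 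Transitivity of $(\Psi_t)$ then follows because $T_\va$ is Besicovitch (hence topologically transitive by Proposition~\ref{dych}, since $\va$ has zero mean and $\va^{(n)}(0)\to+\infty$), the suspension of a transitive map is transitive, and $\Upsilon$ is a homeomorphism onto $\R^3\setminus(\s_0\cup\R_0)$, a dense open subset of $\R^3$.

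Next I would pin down the $\omega$- and $\alpha$-limit sets. For a point $(x,y,z)=\Upsilon(\omega,\theta,s)$ with $\omega-\alpha\theta\in D^{s_-s_+}$, the identity $s_n=s+\va^{(n)}(\omega-\alpha\theta)+(\text{bounded boundary terms})$ — exactly the one displayed in the excerpt — shows $s_n\to+\infty$ when $s_+=+$ and $s_n\to-\infty$ when $s_+=-$; feeding this into the explicit formulas for $x+iy$ and $z$ in terms of $(\omega,\theta,s)$ gives $\Psi_n(x,y,z)\to(e^{2\pi i\theta},0)\in\s_0=A_+$ in the first case and, along the reparametrized times $n/\alpha$, $\Psi_{n/\alpha}(x,y,z)\to(0,\tan(\pi\omega-\pi/2))\in\R_0=A_-$ in the second case (here one uses $\omega\neq 0$, which holds on a dense set). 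The analogous computation with $n\to-\infty$ identifies the $\alpha$-limit set as $A_{s_-}$. Since $D^{s_-s_+}$ is invariant under rotation by $\alpha$ and is nonempty by the Proposition in Section~\ref{varia}, it is dense in $\T$, so the set of $(\omega,\theta,s)$ with $\omega-\alpha\theta\in D^{s_-s_+}$ is dense in $\T\times\T\times\R$ and its $\Upsilon$-image is dense in $\R^3$; this yields the density assertion.

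Finally, for the Hausdorff-dimension bound I would invoke Theorem~\ref{wym12}: for almost every $\alpha$ there is a continuous zero-mean $\va$ with $\dim_H D^{s_-s_+}(\alpha,\va)\geq 1/2$ for all four sign pairs. The set in question is $\{\Upsilon(\omega,\theta,s):\omega-\alpha\theta\in D^{s_-s_+}\}$, which (away from the measure-zero, lower-dimensional loci $\s_0,\R_0$) is a bi-Lipschitz-on-compacta image of an open subset of $\{(\omega,\theta,s):\omega-\alpha\theta\in D^{s_-s_+}\}=\{(\omega,\theta,s):\omega\in D^{s_-s_+}+\alpha\theta\}$, a set that fibers over the $(\theta,s)$-plane with fibers translates of $D^{s_-s_+}$; hence its Hausdorff dimension is $\dim_H D^{s_-s_+}+2\geq 1/2+2=5/2$. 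I expect the main obstacle to be purely bookkeeping: checking that $\Upsilon$ and $\Upsilon^{-1}$ are locally Lipschitz (so that Hausdorff dimension is preserved on the relevant set, and the excised sets $\s_0\cup\R_0$ genuinely do not raise the dimension) and that the boundary integrals in the formula for $s_n$ are uniformly bounded, so that the sign of $\va^{(n)}(\omega-\alpha\theta)$ alone controls the limiting behavior of $s_n$. Once those routine estimates are in place, the theorem follows by combining transitivity of the suspension, the limit-set computation, and Theorem~\ref{wym12}.
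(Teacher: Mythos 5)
Your proposal is correct and follows essentially the same route as the paper: choose $\psi$ realizing the Section~\ref{varia} cocycle $\va$, use the coordinate change $\Upsilon$ to reduce $(\Psi_t)$ to the flow $\omega'=\alpha$, $\theta'=1$, $s'=\psi(\omega,\theta)$, read off the limit sets from the sign of $\va^{(n)}(\omega-\alpha\theta)$, get density from invariance of $D^{s_-s_+}$ under the rotation, and obtain the bound $\dim_H\geq\dim_H D^{s_-s_+}+2\geq 5/2$ from Theorem~\ref{wym12}. The bookkeeping you flag (local bi-Lipschitzness of $\Upsilon$, boundedness of the boundary integrals in $s_n$) is exactly what the paper also treats as routine.
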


\subsection{Differential equations on $\s^3$}\label{sects3} In this
section we will deal with  continuous (or even H\"older
continuous) perturbations of the completely integrable system
$z'_1=iaz_1$, $z'_2=ibz_2$ ($a,b\in\R\setminus\{0\}$) on
$\C\times\C\simeq\R^4$.  Let us consider the system of
differential equations
\begin{align}\label{rowzesp}
\begin{split}
z_1'&=iaz_1-F(z_1,z_2)\overline{z_2}\\
z_2'&=ibz_2+F(z_1,z_2)\overline{z_1},
\end{split}
\end{align} where
$F:\C\times\C\to\C$ is a continuous function $\R_+$-homogeneous of
degree $1$ on each coordinate, i.e.\
$F(t_1z_1,t_2z_2)=t_1t_2F(z_1,z_2)$ for all $t_1,t_2>0$, and such
that $\overline{z_1}\overline{z_2}F(z_1,z_2)\in\R$. Denote by
$(\Psi_t)_{t\in\R}$ the associated flow on $\C\times\C$. The
existence of $(\Psi_t)_{t\in\R}$ will be shown as a byproduct.
Note that $|z_1|^2+|z_2|^2$ is a first integral for
(\ref{rowzesp}). Indeed,
\begin{equation}\label{pochz1}
\frac{d}{dt}|z_1|^2=2\Re
z_1'\overline{z_1}=2\Re(ia|z_1|^2-F(z_1,z_2)\overline{z_1}\overline{z_2})=-2F(z_1,z_2)\overline{z_1}\overline{z_2}
\end{equation}
and
\begin{equation}\label{pochz2}
\frac{d}{dt}|z_2|^2=2\Re
z_2'\overline{z_2}=2\Re(ia|z_2|^2+F(z_1,z_2)\overline{z_1}\overline{z_2})=2F(z_1,z_2)\overline{z_1}\overline{z_2},
\end{equation}
so $\frac{d}{dt}(|z_1|^2+|z_2|^2)=1$. Therefore,
\[\s^3=\{(z_1,z_2)\in\C\times\C:|z_1|^2+|z_2|^2=1\}\]
is $(\Phi_t)$--invariant and we confine ourselves to the study of
$(\Psi_t)_{t\in\R}$ on  $\s^3$. Let us consider
$$\widetilde{F}:\C\setminus\{0\}\times\C\setminus\{0\}\to\R,\;\;
\widetilde{F}(z_1,z_2)=\frac{F(z_1,z_2)}{z_1z_2}=\frac{F(z_1,z_2)\overline{z_1}\overline{z_2}}{|z_1|^2|z_2|^2}\in\R.$$
By assumptions, $\widetilde{F}$ is continuous and
$\widetilde{F}(z_1,z_2)=\widetilde{F}(z_1/|z_1|,z_2/|z_2|)$. Let
\[\psi:\T\times\T\to\R,\;\;\psi(\omega,\theta)=\widetilde{F}(e^{2\pi i\omega},e^{2\pi i\theta}),\]
so $\psi$ is continuous. Moreover, note that each continuous
function $\psi:\T\times\T\to\R$ determines a  continuous function
 $F:\C\times\C\to\C$ which is $\R_+$-homogeneous   of degree $1$ on each
coordinate and such that
$\overline{z_1}\overline{z_2}F(z_1,z_2)\in\R$ as follows
\[F(z_1,z_2)=z_1z_2\psi(\omega,\theta)\text{ whenever }z_1=|z_1|e^{2\pi i\omega},\;z_2=|z_2|e^{2\pi i\theta}.\]
If additionally $\psi$ is $\gamma$-H\"older continuous then
$F:\s^3\to\C$ is $\gamma$-H\"older continuous as well (see
Proposition~\ref{holdwu} in Appendix~\ref{sechol}). Let
\[\s^1_-:=\{(z_1,z_2)\in\s^3:|z_1|=1,z_2=0\}\text{ and }\s^1_+:=\{(z_1,z_2)\in\s^3:z_1=0,|z_2|=1\}.\]
Observe that $\s^1_+$, $\s^1_-$ are invariant sets and
\[\Psi(z_1,0)=(e^{iat}z_1,0)\text{ and }\Psi(0,z_2)=(0,e^{ibt}z_2).\]
Let us consider a new coordinates
$(\omega,\theta,s)\in\T\times\T\times\R$ of
$\s^3\setminus(\s^1_+\cup\s^1_-)$ given by
\[(z_1,z_2)=\Upsilon(\omega,\theta,s)=(e^{2\pi i\omega}\cos\arctan e^s,e^{2\pi i\theta}\sin\arctan e^s).\]
Then
\begin{equation*}
e^{2\pi i\omega}=z_1/|z_1|,\;e^{2\pi i\theta}=z_2/|z_2|\text{ and
}|z_1|=\cos\arctan e^s,\;\;|z_2|=\sin\arctan e^s.
\end{equation*}
Thus
\begin{equation*}
\frac{|z_2|}{|z_1|}=\frac{\sin\arctan e^s}{\cos\arctan e^s}=e^s,
\end{equation*}
so
$$s=\ln|z_2|-\ln|z_1|=\frac{1}{2}(\ln|z_2|^2-\ln|z_1|^2).$$ By
(\ref{pochz1}) and (\ref{pochz2}), it follows that
\begin{align*}
\begin{split}
\frac{ds}{dt}&=\frac{1}{2}\left(\frac{1}{|z_2|^2}\frac{d}{dt}|z_2|^2-\frac{1}{|z_1|^2}\frac{d}{dt}|z_1|^2\right)=
F(z_1,z_2)\overline{z_1}\overline{z_2}\left(\frac{1}{|z_2|^2}+\frac{1}{|z_1|^2}\right)\\
&=F(z_1,z_2)\frac{\overline{z_1}\overline{z_2}}{|z_1|^2|z_2|^2}=\widetilde{F}(z_1,z_2)
=\widetilde{F}(z_1/|z_1|,z_2/|z_2|)=\psi(\omega,\theta).
\end{split}
\end{align*}
Moreover,
\begin{align*}
2\pi\omega'&=\Im\frac{z_1'}{z_1}=\Im\frac{iaz_1-F(z_1,z_2)\overline{z_2}}{z_1}=\Im\left(ia-\frac{F(z_1,z_2)\overline{z_1}\overline{z_2}}{|z_1|^2}\right)=a,\\
2\pi\theta'&=\Im\frac{z_2'}{z_2}=\Im\frac{ibz_2-F(z_1,z_2)\overline{z_1}}{z_2}=\Im\left(ib+\frac{F(z_1,z_2)\overline{z_1}\overline{z_2}}{|z_2|^2}\right)=b.
\end{align*}
Hence if $a=2\pi\alpha$ and $b=2\pi$ then in the new coordinates
the differential equation (\ref{rowzesp}) on
$\s^3\setminus(\s^1_+\cup\s^1_-)$ takes the form
\[\omega'=\alpha,\;\;\theta'=1,\;\;s'=\psi\left(\omega,\theta\right),\]
and we return to the scheme from  Section~\ref{difeq} as well.
Therefore if $(z_1,z_2)=\Upsilon(\omega,\theta,s)\in\s^3$ then
$\Psi_t(z_1,z_2)$ equals
\[\left(e^{2\pi i(\omega+t\alpha)}\cos\arctan
e^{s+\int_{0}^t\psi(\omega+\alpha\tau,\theta+\tau)d\tau}, e^{2\pi
i(\theta+t)}\sin\arctan
e^{s+\int_{0}^t\psi(\omega+\alpha\tau,\theta+\tau)d\tau}\right).\]
Recall that $\cos\arctan e^s\to 0$ as $s\to+\infty$  and
$\sin\arctan e^s\to 0$ as $s\to-\infty$.  It follows that if
$\int_{0}^t\psi(\omega+\alpha\tau,\theta+\tau)d\tau\to s_+\infty$
as $t\to+\infty$, then the $\omega$--limit set of $(z_1,z_2)$ is
equal to $\s^1_{s_+}$. Moreover, if
$\int_{0}^t\psi(\omega+\alpha\tau,\theta+\tau)d\tau\to s_-\infty$
as $t\to-\infty$, then the $\alpha$--limit set of $(z_1,z_2)$ is
equal to $\s^1_{s_-}$.

Now as a consequence of results from
Sections~\ref{secpodst}-\ref{secdimension} we have the following
theorem which demonstrates possible coexistence of different
behaviours for solutions of (\ref{rowzesp}).

\begin{Th}
For every irrational $\alpha$ there exists a continuous function
$F:\s^3\to\R$ such that the corresponding flow $(\Psi_t)_{t\in\R}$
on $\s^3$ is transitive and for each $s_-,s_+\in\{-,+\}$ the set
$HC_{s_-s_+}$ of points such that the $\omega$--limit set is equal
to $\s^1_{s_+}$ and the $\alpha$--limit set is equal to
$\s^1_{s_-}$ is dense. If $\alpha\in DC(a)$ for some $a\geq 1$
then for every $0<\gamma<1/((1+a)a)$ the function $F$ can be
chosen $\gamma$-H\"older continuous.

Moreover, for almost every $\alpha$ the Hausdorff dimension of
each set $HC_{s_-s_+}$ is no smaller than $5/2$.
\end{Th}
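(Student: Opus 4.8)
The plan is to transport the problem, through the change of coordinates $\Upsilon$ already carried out above, to the cylindrical picture of Section~\ref{difeq}: on $\s^3\setminus(\s^1_+\cup\s^1_-)$ the map $\Upsilon$ conjugates $(\Psi_t)_{t\in\R}$ to the flow $\omega'=\al$, $\theta'=1$, $s'=\psi(\omega,\theta)$ on $\T\times\T\times\R$, where $F(z_1,z_2)=z_1z_2\psi(\omega,\theta)$. So it suffices to exhibit a continuous $\psi:\T^2\to\R$ with $\int_{\T^2}\psi=0$ whose ergodic integrals along orbits have the prescribed behaviour. I would build $\psi$ from a function $\va:\T\to\R$ of zero mean by setting $\psi(\omega,\theta)=\va(\omega-\al\theta)b(\theta)$ with $b$ the smooth bump of Section~\ref{difeq}, so that $\va(\omega)=\int_0^1\psi(\omega+\tau\al,\tau)\,d\tau$; since $b$ vanishes near $0$ and $1$ the product extends continuously to $\T^2$ and inherits the modulus of continuity of $\va$, and then by Proposition~\ref{holdwu} so does $F$.

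For the choice of $\va$ I would take: for the continuous statement, the function furnished by the Proposition concluding Section~\ref{varia}, for which all four $D^{s_-s_+}(\al,\va)$ are nonempty; for the ``almost every $\al$'' assertion, the function built in the proof of Theorem~\ref{wym12}, which in addition satisfies $\dim_H D^{s_-s_+}(\al,\va)\ge 1/2$; and for the H\"older refinement, a function obtained by rerunning the construction of Section~\ref{varia} but with the sampling subsequence $(k_n)$ (all of the same parity) and the heights $M_n$ chosen so that, besides (\ref{m2v}), (\ref{m3v}), (\ref{jednakzal+}) and (\ref{jednakzal}), the summands $\va_n:=f_n(\,\cdot\,+\al)-f_n(\,\cdot\,)$ satisfy the hypotheses of Lemma~\ref{kf1} with $w_n:=q_{k_n}q_{k_n+1}$ (a lacunary sequence, so (\ref{asla}) holds as in (\ref{mian})). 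Since $L(\va_n)=2L_n=2M_nq_{k_n}q_{k_n+1}/q_{k_{n-1}}$ and $\|\va_n\|_{C(\T)}\le M_n/q_{k_{n-1}}$, the Diophantine estimate $q_{k_n+1}\le Cq_{k_n}^{a}$ should make this possible exactly in the range $0<\gamma<1/((1+a)a)$, just as in the theorem of Section~\ref{holder}.

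With $\va$ fixed, the bridge between the two pictures is the one already used in Section~\ref{sectr3}: splitting the orbit integral over unit time intervals and using the $\T$-periodicity of $\psi$ in its second variable gives, uniformly in the remaining variables, $s+\int_0^t\psi(\omega+\al\tau,\theta+\tau)\,d\tau=s+\va^{(\lfloor t\rfloor)}(\omega-\al\theta)+O(\|\psi\|_{C(\T^2)})$. Hence if $\omega-\al\theta\in D^{s_-s_+}(\al,\va)$, then this integral tends to $s_+\infty$ as $t\to+\infty$ and to $s_-\infty$ as $t\to-\infty$, so by the limit-set criterion established just before the theorem, $\Upsilon(\omega,\theta,s)\in HC_{s_-s_+}$; thus $\Upsilon\bigl(\{(\omega,\theta,s):\omega-\al\theta\in D^{s_-s_+}(\al,\va)\}\bigr)\subseteq HC_{s_-s_+}$. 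Because $D^{s_-s_+}(\al,\va)$ is nonempty and invariant under $x\mapsto x+\al$, it is dense in $\T$, so this subset is dense in $\s^3\setminus(\s^1_+\cup\s^1_-)$, whence $HC_{s_-s_+}$ is dense in $\s^3$. Transitivity of $(\Psi_t)_{t\in\R}$ follows since $\va$ has zero mean, is not a coboundary (it admits an unbounded orbit) and is not transient (by recurrence of $T_\va$), so $T_\va$ is topologically transitive by Proposition~\ref{dych}; its suspension flow, to which $\Upsilon$ conjugates $(\Psi_t)_{t\in\R}$ over the dense open set $\s^3\setminus(\s^1_+\cup\s^1_-)$, then has a dense orbit.

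Finally, for the dimension estimate I would observe that the shear $(\omega,\theta,s)\mapsto(\omega-\al\theta,\theta,s)$ is bi-Lipschitz and carries $\{(\omega,\theta,s):\omega-\al\theta\in D^{s_-s_+}(\al,\va)\}$ onto $D^{s_-s_+}(\al,\va)\times\T\times\R$, while on each slab $\{|s|\le N\}$ the diffeomorphism $\Upsilon$ is bi-Lipschitz; hence $\dim_H HC_{s_-s_+}\ge\dim_H\bigl(D^{s_-s_+}(\al,\va)\times\T\times[-N,N]\bigr)\ge\dim_H D^{s_-s_+}(\al,\va)+2$, which for a.e.\ $\al$ is $\ge 5/2$ by Theorem~\ref{wym12}. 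The delicate point I expect is the H\"older part of the construction of $\va$: one must verify that the fairly rigid constraints (\ref{jednakzal+}), (\ref{jednakzal}) needed for the four-fold coexistence can be reconciled with the Lipschitz-constant and sup-norm budget of Lemma~\ref{kf1} throughout $0<\gamma<1/((1+a)a)$, which amounts to a careful choice of the subsequence $(k_n)$ of convergents of $\al$ along which the blocks $f_n$ are localized.
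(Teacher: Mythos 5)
Your proposal follows the paper's own route essentially step for step: conjugate $(\Psi_t)_{t\in\R}$ on $\s^3\setminus(\s^1_+\cup\s^1_-)$ via $\Upsilon$ to the scheme $\omega'=\al$, $\theta'=1$, $s'=\psi(\omega,\theta)$, manufacture $\psi$ from a circle function $\va$ with $\va(\omega)=\int_0^1\psi(\omega+\tau\al,\tau)\,d\tau$, use the limit-set criterion established before the theorem together with the constructions of Sections~\ref{varia}--\ref{secdimension} (density of $D^{s_-s_+}(\al,\va)$ by rotation invariance, transitivity via Proposition~\ref{dych} and the suspension picture, $\dim_H HC_{s_-s_+}\geq\dim_H D^{s_-s_+}(\al,\va)+2$ with Theorem~\ref{wym12} giving $5/2$), and Proposition~\ref{holdwu} to transfer H\"older continuity from $\psi$ to $F$. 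All of that matches the paper, and your bridging estimate over unit time intervals and the bi-Lipschitz argument for the dimension are fine (indeed more explicit than the text).

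The one point of divergence is the H\"older clause, where you set yourself a stronger task than the paper actually discharges. The paper's implicit proof of that clause is simply to feed the $\gamma$-H\"older $\va$ of Section~\ref{holder} (built from the tent functions of Section~\ref{secpodst}, which only yield the Besicovitch property, i.e.\ $0\in D^{++}$) through Proposition~\ref{holdwu}; it nowhere produces a H\"older $\va$ realizing all four sets $D^{s_-s_+}$. You instead propose to re-run the Section~\ref{varia} construction under the budget of Lemma~\ref{kf1} with $w_n=q_{k_n}q_{k_n+1}$, and you acknowledge but do not verify the compatibility. That verification is genuinely nontrivial: condition (\ref{jednakzal+}) forces $q_{k_{n+1}}>16\,q_{k_n}q_{k_n+1}/q_{k_{n-1}}$, i.e.\ a definite sparseness of $(k_n)$, while the H\"older budget requires $M_n\leq q_{k_{n-1}}/\bigl(2(q_{k_n}q_{k_n+1})^\gamma\bigr)$ to tend to infinity (and to grow by factors $33$ for (\ref{jednakzal})), i.e.\ $(q_{k_n}q_{k_n+1})^\gamma\ll q_{k_{n-1}}$. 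Under $q_{m+1}\leq Cq_m^a$ these pull in opposite directions, and a straightforward count (setting $x_n=\log q_{k_n}$, one needs roughly $(1+a)x_n-x_{n-1}\lesssim x_{n+1}\lesssim x_n/((1+a)\gamma)$) caps the admissible exponent strictly below $1/((1+a)a)$ when $a>1$. So as written, the H\"older part of your plan has an unresolved gap; either supply that analysis (possibly settling for a smaller exponent range for the four-type statement), or do what the paper does and prove the $\gamma$-H\"older clause with the Section~\ref{holder} function, in which case only $HC_{++}$ is guaranteed dense for the H\"older choice of $F$.
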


\appendix
\section{Open problems}
If a continuous function $\va:\T\to\R$ with zero mean has bounded
variation then the uniform Denjoy-Koksma inequality holds, i.e.
$|\va^{(q_n)}(x)|\leq \var \va$ for all $x\in\T$, and as we have
already noticed, this implies the absence of discrete orbits for
$T_\va$. Now assume that $\va:\T\to\R$ is continuous and it
satisfies only $\widehat{\varphi}(n)=\mbox{O}(1/|n|)$. As it has
been proved in \cite{Aa-Le-Ma-Na}, $\va$ fulfills an
$L^2$--Denjoy-Koksma inequality, in particular the sequence
$(\va^{(q_n)})_{n\in\N}$ is bounded in $L^2(\T)$. However, we are
not aware of any direct argument based on the $L^2$--Denjoy-Koksma
inequality which shows the absence of discrete orbits.

\begin{Problem}
Does there exist an irrational rotation $Tx=x+\alpha$ and a
continuous function $\va:\T\to\R$ with $\widehat{\varphi}(n)=
\mbox{O}(1/|n|)$ such that the cylindrical transformation
$T_\varphi$ is Besicovitch?
\end{Problem}

In Section~\ref{holder}, for $\alpha$ satisfying a Diophantine
condition we have constructed a $\gamma$--H\"older continuous
function $\va:\T\to\R$ such that the corresponding cylindrical
transformation is Besicovitch. However $\gamma$ was smaller than
$1/2$.

\begin{Problem}
Does there exist a $\gamma$--H\"older continuous function
$\va:\T\to\R$ with $\gamma\geq 1/2$  such that  $T_\varphi$ is
Besicovitch? Can we construct H\"older continuous Besicovitch
cylindrical transformations  over ``very'' Liouville rotations,
i.e.\ when we assume that the sequence of denominators $(q_n)$
increases very rapidly?
\end{Problem}

We estimated from below the Hausdorff dimension of the sets
$D^{s_-s_+}(\alpha,\va)$ by estimating from below the Hausdorff
dimension of a subset $F^{s_-s_+}$. However, under the condition
(\ref{wzrost}), the Hausdorff dimension of the latter set is not
bigger than $1/2$.

\begin{Problem}
Is $1/2$ an upper bound for the Hausdorff dimension of
$D(\alpha,\va)$ or can we find $\alpha$ and $\va$ so that
$\dim_H(D(\alpha,\va))>1/2$? Is there any relationship between
$\dim_HD(\alpha,\va)$ and the H\"older exponent of $\va$ or the
type of $\alpha$ while estimating the Hausdorff dimension from
above?
\end{Problem}

It would  be also interesting to decide whether there can exist a
Besicovitch transformation in the most extremal sense of the
definition.

\begin{Problem}Does there exist a cylindrical transformation
which is Besicovitch and such that each orbit is either dense or
discrete?
\end{Problem}

There are some natural questions concerning smooth Besicovitch
cylindrical transformations (the problem below being only a
sample).

\begin{Problem}
Does there exist a $C^\infty$ (or even analytic) Besicovitch
cylindrical transformation over a minimal rotation on higher
dimensional tori?
\end{Problem}
\noindent Notice that in the present paper we did not decide
whether a $C^1$ Besicovitch transformation exists over a
two-dimensional minimal rotation.

\begin{Remark}Note that we can view points whose orbits are discrete as
special points which are not recurrent. The results of this paper
can hence be seen as a contribution toward a better understanding
of the problem how big the set of non-recurrent points can be,
both in the compact (as in Section~\ref{sects3}) or non-compact
case.
\end{Remark}

\begin{Remark}
We have not been able to decide whether or not constructions from
Sections~\ref{secpodst}--\ref{highdim} lead to ergodic cocycles.
Notice however that the cocycle $\varphi:\T\to\R$ from
Section~\ref{oduze} is a measurable coboundary. Indeed,
$\|f_k-M_k\|_{L^1}\leq 2q_k\delta_k M_k\leq 2\frac{\log
q_k}{q_{k+1}}$, so the serious $f=\sum_{k=1}^\infty (f_k-M_k)$
converges in $L^1$, and hence $\varphi(x)=f(x+\alpha)-f(x)$ for
a.e.\ $x\in\T$. Moreover, some further modifications of the
construction from Section~\ref{varia} lead also to cocycles which
are measurable coboundaries.
\end{Remark}

\section{Minimality of rotations on tori}
\label{dodatek} Set  $A=16$, $B= \lceil 4^aA\rceil$ and $C= 2B$.
Let $S=\left((A^{(j)}_n,B^{(j)}_n)\right)_{n\geq 1,1\leq j\leq d}$
be a sequence such that $(A^{(j)}_n,B^{(j)}_n)$ is equal to
$(4,C)$ or $(A,B)$. Denote by $\mathfrak{C}(a,S)$ the set of all
$(\alpha_1,\ldots,\alpha_d)\in\T^d$ such that
\begin{equation}\label{nierogol1}
A^{(j)}_n(q_n^{(j-1)})^a\leq q_n^{(j)}\leq
B^{(j)}_n(q_n^{(j-1)})^a\text{ for all }n\geq 1,\;1\leq j\leq d,
\end{equation}
with the notation $q_n^{(0)}=q_{n-1}^{(d)}$. For every subset
$U\subset\{1,\ldots, d\}$ denote by $S^U$ the sequence
$\left((\widetilde{A}^{(j)}_n,\widetilde{B}^{(j)}_n)\right)_{n\geq
1,1\leq j\leq d}$ such
that\[(\widetilde{A}^{(j)}_n,\widetilde{B}^{(j)}_n)=\left\{\begin{array}{ccc}
(4,C)&\text{ if }& j\in U\\
({A}^{(j)}_n,{B}^{(j)}_n)&\text{ if }& j\notin U.
\end{array}\right.\]
Let $\underline{S}$ be the constant sequence with
$(A^{(j)}_n,B^{(j)}_n)=(A,B)$. Then
$\mathfrak{C}(a,\underline{S}^{\{1,\ldots,d\}})=\mathfrak{C}(a)$.

\begin{Lemma}\label{lape1}
The set $\mathfrak{C}(a,\underline{S})\subset\T^d$ uncountable.
\end{Lemma}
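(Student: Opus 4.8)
The plan is to build the tuples recursively, one continued‑fraction partial quotient at a time, and to show that at infinitely many stages there are at least two admissible choices; this produces a complete binary tree of choice sequences, each branch of which yields a point of $\mathfrak{C}(a,\underline S)$, with distinct branches yielding distinct points, so that $\mathfrak{C}(a,\underline S)$ is uncountable.

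First I would recall that prescribing an infinite sequence $(b_n)_{n\geq1}$ of positive integers is the same as prescribing an irrational $\beta=[0;b_1,b_2,\dots]\in(0,1)$, whose denominators are then $q_{-1}=0$, $q_0=1$, $q_n=b_nq_{n-1}+q_{n-2}$. Order the index set $\{(n,j):n\geq1,\ 1\leq j\leq d\}$ lexicographically and choose the partial quotients $a^{(j)}_n$ of $\alpha_j$ in that order. Suppose all $a^{(j')}_{n'}$ with $(n',j')$ preceding $(n,j)$ have been chosen so that \eqref{nierogol1} with $A^{(j)}_n=A$, $B^{(j)}_n=B$ holds for every such pair; then $q^{(j)}_{n-1}$ and $q^{(j)}_{n-2}$ are determined, and as $a^{(j)}_n$ runs over the positive integers, $q^{(j)}_n=a^{(j)}_nq^{(j)}_{n-1}+q^{(j)}_{n-2}$ runs over the arithmetic progression $\{q^{(j)}_{n-2}+kq^{(j)}_{n-1}:k\geq1\}$ of common difference $q^{(j)}_{n-1}$ (for $n=1$ this is all of $\N$, of difference $q^{(j)}_0=1$). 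The constraint \eqref{nierogol1} for $(n,j)$ asks exactly that $q^{(j)}_n\in I_{n,j}:=[A(q^{(j-1)}_n)^a,\ B(q^{(j-1)}_n)^a]$, where for $j=1$ one reads $q^{(0)}_n=q^{(d)}_{n-1}$.

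The step I expect to be the main obstacle is showing that $I_{n,j}$ always contains at least two terms of that progression. The mechanism is that, because $A=16$ and $a\geq1$, in the lexicographic order each denominator is at least $16$ times the preceding one: $q^{(j)}_n\geq A(q^{(j-1)}_n)^a\geq16\,q^{(j-1)}_n$ for $j\geq2$, and $q^{(1)}_n\geq A(q^{(d)}_{n-1})^a\geq16\,q^{(d)}_{n-1}$. Since $q^{(j-1)}_n$ stands exactly $d-1\geq2$ places after $q^{(j)}_{n-1}$ (with $q^{(0)}_n:=q^{(d)}_{n-1}$), this gives $q^{(j-1)}_n\geq16^{\,d-1}q^{(j)}_{n-1}\geq16\,q^{(j)}_{n-1}$. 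Hence, using $B-A\geq1$,
\[
|I_{n,j}|=(B-A)(q^{(j-1)}_n)^a\geq(q^{(j-1)}_n)^a\geq q^{(j-1)}_n\geq16\,q^{(j)}_{n-1}\geq2\,q^{(j)}_{n-1},
\]
twice the common difference, while the left endpoint $A(q^{(j-1)}_n)^a\geq16\,q^{(j)}_{n-1}$ exceeds the least admissible value $q^{(j)}_{n-2}+q^{(j)}_{n-1}\leq2q^{(j)}_{n-1}$; since an interval of length at least $2q$ lying above the first term of a progression of step $q$ contains at least two of its terms, $I_{n,j}$ meets the progression in at least two points. The case $n=1$ I would dispatch directly: $I_{1,1}=[16,B]$ with $B=\lceil16\cdot4^a\rceil\geq64$, and for $j\geq2$ the interval $I_{1,j}$ has length $(B-A)(q^{(j-1)}_1)^a\geq48$ and left endpoint $\geq16$, hence contains at least two positive integers. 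Choosing two of these admissible values of $q^{(j)}_n$ — equivalently, of $a^{(j)}_n$ — I let the construction branch.

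To finish, running this along the whole lexicographic order produces, for every $\varepsilon\in\{0,1\}^{\N}$, a tuple $\bar\alpha(\varepsilon)=(\alpha_1(\varepsilon),\dots,\alpha_d(\varepsilon))$ whose denominators satisfy \eqref{nierogol1} with $\underline S$, so $\bar\alpha(\varepsilon)\in\mathfrak{C}(a,\underline S)$. If $\varepsilon\neq\varepsilon'$, let $(n_0,j_0)$ be the first pair where the two choice sequences disagree; all earlier partial quotients coincide, so $q^{(j_0)}_{n_0-1}$ and $q^{(j_0)}_{n_0-2}$ are the same for both tuples while $a^{(j_0)}_{n_0}$ differs, whence $q^{(j_0)}_{n_0}(\varepsilon)\neq q^{(j_0)}_{n_0}(\varepsilon')$ and therefore $\alpha_{j_0}(\varepsilon)\neq\alpha_{j_0}(\varepsilon')$. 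Thus $\varepsilon\mapsto\bar\alpha(\varepsilon)$ is injective, and since $\{0,1\}^{\N}$ is uncountable, so is $\mathfrak{C}(a,\underline S)$. Apart from the length estimate for $I_{n,j}$, every step is routine bookkeeping about the lexicographic recursion and the correspondence between partial‑quotient sequences and irrationals.
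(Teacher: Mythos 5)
Your proposal is correct and follows essentially the same route as the paper: both construct the partial quotients $a^{(j)}_n$ inductively in lexicographic (cyclic) order, using the already-enforced inequalities to show that the target interval $[A(q_n^{(j-1)})^a,B(q_n^{(j-1)})^a]$ always contains at least two terms of the progression $a\,q^{(j)}_{n-1}+q^{(j)}_{n-2}$, and conclude uncountability from the resulting binary branching. Your verification of the "two admissible choices" step is just a more explicit version of the paper's one-line estimate $q^{(l)}_{n-2}\leq q^{(l)}_{n-1}\leq q_n^{(l-1)}/A\leq (q_n^{(l-1)})^a/16$.
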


\begin{proof}
Let $[0;a_1^{(j)},a_2^{(j)},\ldots]$ stand for the continued
fraction expansion of $\alpha_j$ for $j=1,\ldots,d$. We will use
the notation $a_n^{(0)}:=a_{n-1}^{(d)}$. We will construct
sequences of partial quotients $(a_n^{(j)})_{n=1}^\infty$,
$j=1,\ldots,d$ inductively. In the first step choose any natural
$A\leq a_1^{(1)}\leq B$. Then $q_1^{(1)}=a_1^{(1)}$ fulfills
(\ref{nierogol1}) for $(n,j)=(1,1)$. In the inductive step suppose
that for a pair of natural numbers $(n,l)$ with $1\leq l\leq d$
all partial quotients $a_k^{(j)}$ for $k<n$, $1\leq j\leq d$, and
$a_n^{(j)}$ for $0\leq j<l$ are already chosen so that the
corresponding denominators satisfy the following inequalities
\begin{equation}\label{nierogol}
A(q_k^{(j-1)})^a\leq q_k^{(j)}\leq B(q_k^{(j-1)})^a
\end{equation}
for all pairs $(k,j)$ such that $1\leq k<n$, $1\leq j\leq d$ or
$k=n$, $1\leq j<l$. Now we can choose $a_n^{(l)}\in\N$ so that
\begin{equation}\label{wybora}
A(q_n^{(l-1)})^a\leq a_n^{(l)}q_{n-1}^{(l)}+q_{n-2}^{(l)}\leq
B(q_n^{(l-1)})^a.
\end{equation}
Indeed, by (\ref{nierogol}), $q_{n-2}^{(l)}\leq q_{n-1}^{(l)}\leq
q_n^{(l-1)}/A\leq (q_n^{(l-1)})^a/16$, so we can find at least two
numbers $a_n^{(l)}$ satisfying (\ref{wybora}). Since there are at
least two choices for $a_n^{(l)}$ at each step of the
construction, the set $\mathfrak{C}(a,\underline{S})$ is
uncountable.
\end{proof}

\begin{Lemma}\label{lape2}
Fix $1\leq l\leq d$ and let
$S=\left((A^{(j)}_n,B^{(j)}_n)\right)_{n\geq 1,1\leq j\leq d}$ be
a sequence such that $(A^{(l)}_n,B^{(l)}_n)=(A,B)$ for $n\geq 1$.
If
$\overline{\alpha}=(\alpha_1,\ldots,\alpha_d)\in\mathfrak{C}(a,S)$
then the set
\[\{\alpha\in\T:(\alpha_1,\ldots,\alpha_{l-1},\alpha,
\alpha_{l+1},\ldots,\alpha_d)\in\mathfrak{C}(a,S^{\{l\}})\}\] is
uncountable.
\end{Lemma}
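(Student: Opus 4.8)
The plan is to mimic the proof of Lemma~\ref{lape1}: I would build the continued fraction expansion $\alpha=[0;a_1,a_2,\dots]$ of the desired $\alpha$ inductively, checking that at each step there are at least two admissible choices of the next partial quotient. Since distinct choice--sequences yield distinct irrationals, this produces uncountably many $\alpha$ with $(\alpha_1,\dots,\alpha_{l-1},\alpha,\alpha_{l+1},\dots,\alpha_d)\in\mathfrak{C}(a,S^{\{l\}})$. Throughout, $q_n$ denotes the $n$--th denominator of the $\alpha$ being constructed (so $q_n$ plays the role of $q_n^{(l)}$), while for $j\neq l$ the $q_n^{(j)}$ are the fixed denominators of $\alpha_j$, with the cyclic conventions $q_n^{(0)}=q_{n-1}^{(d)}$ and $q_n^{(d+1)}=q_{n+1}^{(1)}$.

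First I would isolate which inequalities of (\ref{nierogol1}) for the modified vector actually constrain $q_n$. Every inequality of $\mathfrak{C}(a,S^{\{l\}})$ that involves $q_n^{(l)}$ neither as left--hand side nor inside the base on the right coincides with the corresponding inequality for $\overline{\alpha}\in\mathfrak{C}(a,S)$ (the constants at positions $\neq l$ are the same in $S$ and $S^{\{l\}}$), hence holds automatically. This leaves exactly two conditions on $q_n$ for each $n$: the inequality at $(n,l)$, which with the relaxed constants $(4,C)$ reads $q_n\in I_1^{(n)}:=[\,4(q_n^{(l-1)})^a,\,C(q_n^{(l-1)})^a\,]$; and the ``successor'' inequality — at $(n,l+1)$ if $l<d$, at $(n+1,1)$ if $l=d$ — which with its unchanged constants $(A',B')\in\{(4,C),(A,B)\}$ and fixed right--hand side $Q$ reads $q_n\in I_2^{(n)}:=[\,(Q/B')^{1/a},\,(Q/A')^{1/a}\,]$. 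Thus the task reduces to choosing, for each $n$, at least two integers $a_n$ with $q_n=a_nq_{n-1}+q_{n-2}\in I_1^{(n)}\cap I_2^{(n)}$.

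Next I would establish two quantitative inputs. (a) \emph{Nonemptiness and width of $I_1^{(n)}\cap I_2^{(n)}$.} Since $\overline{\alpha}\in\mathfrak{C}(a,S)$ and $(A^{(l)}_n,B^{(l)}_n)=(A,B)$, the original denominator $\tilde q:=q_n^{(l)}(\overline{\alpha})$ satisfies $A(q_n^{(l-1)})^a\le\tilde q\le B(q_n^{(l-1)})^a$; as $4<A=16$ and $B<C=2B$, this places $\tilde q$ well inside $I_1^{(n)}$, and the original successor inequality places it inside $I_2^{(n)}$, so $\tilde q\in I_1^{(n)}\cap I_2^{(n)}\neq\emptyset$. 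Since $I_2^{(n)}$ has multiplicative width $(B'/A')^{1/a}\ge(B/16)^{1/a}\ge 4$, a short case analysis — according to whether each endpoint of $I_1^{(n)}\cap I_2^{(n)}$ is contributed by $I_1^{(n)}$ or by $I_2^{(n)}$ — using $4(q_n^{(l-1)})^a\le A(q_n^{(l-1)})^a\le\tilde q\le B(q_n^{(l-1)})^a\le C(q_n^{(l-1)})^a$ gives $|I_1^{(n)}\cap I_2^{(n)}|\ge 12\,(q_n^{(l-1)})^a$, with left endpoint at least $4(q_n^{(l-1)})^a$. (b) \emph{Negligibility of $q_{n-1}$.} Because the $(A,B)$--constraints are tighter than the $(4,C)$--constraints we have $\mathfrak{C}(a,S)\subseteq\mathfrak{C}(a)$, so (\ref{zdol1}) applies to the coordinates of $\overline{\alpha}$ and gives $q_n^{(l-1)}\ge 4(q_{n-1}^{(l-1)})^{a^d}$ for $n\ge 2$; together with the inductive bound $q_{n-1}\le C(q_{n-1}^{(l-1)})^a$ (from $q_{n-1}\in I_1^{(n-1)}$) and $a^d>1$, this makes $q_{n-1}+q_{n-2}$ far below $4(q_n^{(l-1)})^a$ and makes $|I_1^{(n)}\cap I_2^{(n)}|$ exceed $2q_{n-1}$. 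Hence the arithmetic progression $\{a q_{n-1}+q_{n-2}:a\in\N\}$ meets $I_1^{(n)}\cap I_2^{(n)}$ in at least two points, i.e.\ there are at least two admissible $a_n$. The first steps, and the case $l=1$ (where $q_n^{(l-1)}=q_{n-1}^{(d)}$ is still comfortably large for $n\ge 2$ while $q_0=1$ and $|I_1^{(1)}\cap I_2^{(1)}|\ge 12$), are checked directly. This completes the construction, and distinct sequences of choices give distinct $\alpha$, so the set in question is uncountable.

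The step I expect to be the main obstacle is the width estimate (a): one must show that $I_1^{(n)}\cap I_2^{(n)}$ is not merely nonempty but long on the scale $(q_n^{(l-1)})^a$, even though the original $\tilde q$ may lie close to an endpoint of $I_2^{(n)}$; this is exactly where the slack $4<A$, $B<C$ designed into $S^{\{l\}}$, and the fixed multiplicative width $\ge 4$ of $I_2^{(n)}$, are used. A secondary nuisance is keeping track of the cyclic index conventions in the cases $l=1$ and $l=d$ and of the initial steps, where the super--exponential growth (\ref{zdol1}) has not yet taken effect.
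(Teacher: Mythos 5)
Your proposal is correct and follows essentially the same route as the paper: the paper's interval $I_n$ is exactly your $I_1^{(n)}\cap I_2^{(n)}$, its length is bounded below by the same four-case comparison anchored at the original denominator $q_n^{(l)}(\overline{\alpha})$ and the slack $4<A$, $B<C$, and the partial quotients are then chosen inductively with at least two admissible values at each step. The only (minor) difference is in comparing $|I_n|$ with $q_{n-1}$: the paper notes $|I_n|\ge 3q_{n-1}^{(l+1)}\ge 3q_{n-1}$ via the successor denominator and the inductive hypothesis $A^{(l+1)}_{n-1}q_{n-1}^a\le q_{n-1}^{(l+1)}$, which works uniformly in $n$ and avoids the separate small-$n$ and $l=1$ checks you defer to a direct verification.
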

\begin{proof}
By assumption, for each $n\geq 1$
\begin{equation}\label{ogap1}
A(q_n^{(l-1)})^a\leq q_n^{(l)}\leq B(q_n^{(l-1)})^a,
\end{equation}
\[A^{(l+1)}_n(q_n^{(l)})^a\leq q_n^{(l+1)}\leq
B^{(l+1)}_n(q_n^{(l)})^a,\] and hence, by the latter inequality,
\begin{equation}\label{ogap2}
\frac{(q_n^{(l+1)})^{1/a}}{(B^{(l+1)}_n)^{1/a}}\leq q_n^{(l)}\leq
\frac{(q_n^{(l+1)})^{1/a}}{(A^{(l+1)}_n)^{1/a}}.
\end{equation}
Let
\[I_n:=\left[\max\left(4(q_n^{(l-1)})^a,\frac{(q_n^{(l+1)})^{1/a}}
{(B^{(l+1)}_n)^{1/a}}\right),
\min\left(C(q_n^{(l-1)})^a,\frac{(q_n^{(l+1)})^{1/a}}
{(A^{(l+1)}_n)^{1/a}}\right)\right].\] By the definition of
$A,B,C$ and (\ref{ogap1}), (\ref{ogap2}), we have
\begin{eqnarray*}C(q_n^{(l-1)})^a-4(q_n^{(l-1)})^a&\geq
&3q_n^{(l-1)},\\
\frac{(q_n^{(l+1)})^{1/a}}{(A^{(l+1)}_n)^{1/a}}-
\frac{(q_n^{(l+1)})^{1/a}}{(B^{(l+1)}_n)^{1/a}}&=&
\frac{(q_n^{(l+1)})^{1/a}}{(A^{(l+1)}_n)^{1/a}}
\left(1-\frac{(A^{(l+1)}_n)^{1/a}}{(B^{(l+1)}_n)^{1/a}}\right)\\&\geq&
q_n^{(l)}(1-(A/B)^{1/a})\geq \frac{3}{4}q_n^{(l)},\\
\frac{(q_n^{(l+1)})^{1/a}}{(A^{(l+1)}_n)^{1/a}}-4(q_n^{(l-1)})^a&\geq&
q_n^{(l)}(1-4/A)= \frac{3}{4}q_n^{(l)},
\\C(q_n^{(l-1)})^a-\frac{(q_n^{(l+1)})^{1/a}}{(B^{(l+1)}_n)^{1/a}}&\geq&
q_n^{(l)}(C/B-1)= q_n^{(l)}.
\end{eqnarray*}
Since $q_n^{(l-1)}\geq q_{n-1}^{(l+1)}$ and $q_n^{(l)}\geq
4q_{n-1}^{(l+1)}$, it follows that $|I_n|\geq 3q_{n-1}^{(l+1)}$.

Now we construct an irrational number $\alpha=[0;a_1,a_2,\ldots]$
so that
\[(\alpha_1,\ldots,\alpha_{l-1},\alpha,\alpha_{l+1},\ldots,
\alpha_d)\in\mathfrak{C}(a,S^{\{l\}}).\] We construct the sequence
$(a_n)$ of partial quotients of $\alpha$ inductively. Since
$|I_1|\geq 3q_{0}^{(l+1)}=3$, we can choose $a_1\in I_1$. As
$q_1=a_1\in I_1$, we have
\[4(q_1^{(l-1)})^a\leq q_1\leq
C(q_1^{(l-1)})^a,\;A^{(l+1)}_1q_1^a\leq q_1^{(l+1)}\leq
B^{(l+1)}_1q_1^a.\] In the $n$--th step suppose that
$a_1,\ldots,a_{n-1}$ are already selected so that
\[4(q_k^{(l-1)})^a\leq q_k\leq
C(q_k^{(l-1)})^a,\;A^{(l+1)}_kq_k^a\leq q_k^{(l+1)}\leq
B^{(l+1)}_kq_k^a\] for $1\leq k<n$. It follows that
\[|I_n|\geq 3q_{n-1}^{(l+1)}\geq 3q_{n-1}.\]
Therefore there are at least two natural numbers $a_n$ such that
$a_nq_{n-1}+q_{n-2}\in I_n$. Thus
\[4(q_n^{(l-1)})^a\leq q_n\leq
C(q_n^{(l-1)})^a,\;A^{(l+1)}_nq_n^a\leq q_n^{(l+1)}\leq
B^{(l+1)}_nq_n^a.\] It follows that for each $\alpha$ constructed
in this way we have
\[(\alpha_1,\ldots,\alpha_{l-1},\alpha,\alpha_{l+1},
\ldots,\alpha_d)\in\mathfrak{C}(a,S^{\{l\}}).\]
Moreover, there are at least two choices for $a_n$ at each step of
the construction, so the set $\mathfrak{C}(a,S^{\{l\}})$ is
uncountable.
\end{proof}

\begin{Th}
There are uncountably many
$\overline{\alpha}=(\alpha_1,\ldots,\alpha_d)\in\mathfrak{C}(a)$
such that $\alpha_1,\ldots,\alpha_d,1$ are independent over $\Q$.
\end{Th}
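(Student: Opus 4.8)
The plan is to combine Lemma~\ref{lape1} with Lemma~\ref{lape2} in an inductive construction that modifies one coordinate at a time, destroying all possible $\Q$--linear relations by a countability argument at each stage. A preliminary observation that will be used throughout: if $\overline{\beta}\in\mathfrak{C}(a,S)$ for any admissible sequence $S$, then every coordinate $\beta_j$ is automatically irrational, since (\ref{nierogol1}) forces all the denominators $q_n^{(j)}$ to be defined and to tend to $+\infty$; in particular a single coordinate $\beta_j$ can never satisfy a nontrivial rational relation $k_0+k_1\beta_j=0$.

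First I would fix, using Lemma~\ref{lape1} (the set is uncountable, hence nonempty), a point $\overline{\alpha}^{(0)}=(\alpha^{(0)}_1,\ldots,\alpha^{(0)}_d)\in\mathfrak{C}(a,\underline{S})$. I then build coordinates $\alpha_1,\ldots,\alpha_d$ one at a time, maintaining the following inductive hypothesis at step $j$ (for $1\le j\le d$): the numbers $\alpha_1,\ldots,\alpha_{j-1}$ have been chosen so that
\[(\alpha_1,\ldots,\alpha_{j-1},\alpha^{(0)}_j,\ldots,\alpha^{(0)}_d)\in\mathfrak{C}(a,\underline{S}^{\{1,\ldots,j-1\}})\]
and $\alpha_1,\ldots,\alpha_{j-1},1$ are linearly independent over $\Q$ (for $j=1$ both statements are vacuous, with $\overline{\alpha}^{(0)}$ as the base point, since $\underline{S}^{\emptyset}=\underline{S}$). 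Because the pattern of $\underline{S}^{\{1,\ldots,j-1\}}$ in coordinate $j$ is still $(A,B)$, Lemma~\ref{lape2} applies with $l=j$ and produces an uncountable set $\mathcal{A}_j\subset\T$ of numbers $\alpha$ for which
\[(\alpha_1,\ldots,\alpha_{j-1},\alpha,\alpha^{(0)}_{j+1},\ldots,\alpha^{(0)}_d)\in\mathfrak{C}(a,\underline{S}^{\{1,\ldots,j\}}).\]

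Now I discard the ``bad'' elements of $\mathcal{A}_j$: for each $(k_0,\ldots,k_j)\in\Z^{j+1}$ with $k_j\neq0$, the equation $k_0+k_1\alpha_1+\cdots+k_{j-1}\alpha_{j-1}+k_j\alpha=0$ has the unique solution $\alpha=-(k_0+k_1\alpha_1+\cdots+k_{j-1}\alpha_{j-1})/k_j$, so altogether only countably many values of $\alpha$ are removed and $\mathcal{A}_j$ stays uncountable. Pick $\alpha_j$ in this reduced set. Then no relation with $k_j\neq0$ holds among $\alpha_1,\ldots,\alpha_j,1$, while a relation with $k_j=0$ would be a nontrivial relation among $\alpha_1,\ldots,\alpha_{j-1},1$, contradicting the inductive hypothesis; hence $\alpha_1,\ldots,\alpha_j,1$ are independent over $\Q$ and the induction continues.

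After the $d$--th step we obtain $\overline{\alpha}=(\alpha_1,\ldots,\alpha_d)\in\mathfrak{C}(a,\underline{S}^{\{1,\ldots,d\}})=\mathfrak{C}(a)$ with $\alpha_1,\ldots,\alpha_d,1$ independent over $\Q$. To upgrade ``one such point'' to ``uncountably many'', observe that at the final step there remained uncountably many admissible choices for $\alpha_d$ (an uncountable set minus a countable one), and distinct choices yield distinct vectors $\overline{\alpha}$, each in $\mathfrak{C}(a)$ and each with the required independence; this proves the theorem. The only point that needs genuine care is the bookkeeping of the pattern sequences $\underline{S}^{\{1,\ldots,j-1\}}$, so that at every stage coordinate $j$ still carries the pattern $(A,B)$ and the hypothesis of Lemma~\ref{lape2} is met; the rest is the standard ``diagonalize against the countably many rational relations'' argument.
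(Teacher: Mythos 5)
Your proposal is correct and follows essentially the same route as the paper: start from a point of $\mathfrak{C}(a,\underline{S})$ (Lemma~\ref{lape1}), then modify one coordinate at a time via Lemma~\ref{lape2}, discarding at each stage the countably many values that would create a rational relation with nonzero coefficient on the coordinate just chosen, and get uncountability from the uncountably many surviving choices. The only cosmetic difference is that you carry the $\Q$--independence of $\alpha_1,\ldots,\alpha_{j-1},1$ as an explicit inductive invariant, whereas the paper records the excluded relations through the sets $\Z^{d,l}$ and verifies independence only at the end, using that later steps do not alter the coordinates involved.
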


\begin{proof}
For each $l=1,\ldots,d$ set
\[\Z^{d,l}=\{\overline{m}=(m_1,\ldots,m_d)\in\Z^d:
m_l\neq 0,m_j=0\text{ for all }l<j\leq d\}.\]
 Let
$(\overline{\alpha}[l])_{l=0}^d$ be a sequence in $\T^d$ defined
inductively as follows:
\begin{itemize}
\item[(i)]$\overline{\alpha}[0]\in\mathfrak{C}(a,\underline{S})$;
\item[(ii)]for each $1\leq l\leq d$ let $\overline{\alpha}[l]$ be an
element of $\mathfrak{C}(a,\underline{S}^{\{1,\ldots,l\}})$ such
that $\overline{\alpha}[l]_j=\overline{\alpha}[l-1]_j$ for all
$j\neq l$ and
$\langle\overline{m},\overline{\alpha}[l]\rangle\notin\Z$ for all
$\overline{m}\in\Z^{d,l}$.
\end{itemize}
The existence of $\overline{\alpha}[0]$ follows directly from
Lemma~\ref{lape1}. The existence of uncountably many
$\overline{\alpha}[l]$ follows from Lemma~\ref{lape2} applied to
$\overline{\alpha}=\overline{\alpha}[l-1]$ and
$S=\underline{S}^{\{1,\ldots,l-1\}}$.

Let
$\overline{\alpha}=(\alpha_1,\ldots,\alpha_d):=\overline{\alpha}[d]$.
Then $\alpha_1,\ldots,\alpha_d,1$ are rationally independent.
Indeed, let $\overline{m}\in\Z^d$ be a nonzero vector and let $l$
be the largest index for which $m_l\neq 0$. Then
$\langle\overline{m},\overline{\alpha}[d]\rangle=\langle\overline{m},\overline{\alpha}[l]\rangle$.
Since $\overline{m}\in\Z^{d,l}$, we conclude that
$\langle\overline{m},\overline{\alpha}\rangle=\langle\overline{m},\overline{\alpha}[d]\rangle=\langle\overline{m},\overline{\alpha}[l]\rangle\notin\Z$.
Moreover,
\[(\alpha_1,\ldots,\alpha_d)=\overline{\alpha}[d]\in\mathfrak{C}(a,\underline{S}^{\{1,\ldots,d\}})=\mathfrak{C}(a).\]
At each step of the construction we had uncountably many choices,
so the  set of constructed vectors $\overline{\alpha}$ is
uncountable as well.
\end{proof}

\section{H\"older continuity}\label{sechol}
Let $f:\T\to\R$ be a continuous function. Set $\D=\{z\in\C:|z|\leq
1\}$. Denote by $\widehat{f}:\D\to\C$ the continuous function
given by $\widehat{f}(z)=zf(\omega)$ whenever $z=|z|e^{2\pi i
\omega}$.
\begin{Lemma}\label{holjed}Suppose $f$ is a $\gamma$-H\"older
continuous function  $(0<\gamma\leq 1)$ with
$|f(\omega)-f(\omega')|\leq M\|\omega-\omega'\|^\gamma$. Then
$\widehat{f}$ is $\gamma$-H\"older continuous and
$|\widehat{f}(z)-\widehat{f}(z')|\leq
2(\|f\|_{C^0}+M)|z-z'|^\gamma$.
\end{Lemma}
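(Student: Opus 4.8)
The plan is to estimate $|\widehat{f}(z)-\widehat{f}(z')|$ directly. Write $z=|z|e^{2\pi i\omega}$ and $z'=|z'|e^{2\pi i\omega'}$, choosing $\omega$ (resp.\ $\omega'$) arbitrarily when $z=0$ (resp.\ $z'=0$); since the term in which it occurs will carry a factor $|z|$ (resp.\ $|z'|$), this ambiguity is harmless. The starting point is the algebraic identity
\[
\widehat{f}(z)-\widehat{f}(z')=z\bigl(f(\omega)-f(\omega')\bigr)+(z-z')f(\omega'),
\]
which separates an ``angular'' contribution from a ``radial'' one. The radial term is immediate: $\bigl|(z-z')f(\omega')\bigr|\le\|f\|_{C^0}\,|z-z'|$.

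The real work is in the angular term. By H\"older continuity of $f$, $|z|\,\bigl|f(\omega)-f(\omega')\bigr|\le M\,|z|\,\|\omega-\omega'\|^{\gamma}$, so it suffices to establish the geometric estimate $|z|\,\|\omega-\omega'\|\le\tfrac12\,|z-z'|$. I would obtain this from two remarks. First, by (\ref{sindul}), $|e^{2\pi i\omega}-e^{2\pi i\omega'}|=2\sin\pi\|\omega-\omega'\|\ge 4\,\|\omega-\omega'\|$, hence $\|\omega-\omega'\|\le\tfrac14\,|e^{2\pi i\omega}-e^{2\pi i\omega'}|$. Second, since $|z|\bigl(e^{2\pi i\omega}-e^{2\pi i\omega'}\bigr)=z-|z|e^{2\pi i\omega'}=(z-z')-\bigl(|z|-|z'|\bigr)e^{2\pi i\omega'}$, the triangle inequality together with $\bigl||z|-|z'|\bigr|\le|z-z'|$ gives $|z|\,|e^{2\pi i\omega}-e^{2\pi i\omega'}|\le 2|z-z'|$. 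Multiplying, $|z|\,\|\omega-\omega'\|\le\tfrac12|z-z'|$. Now use $|z|\le 1$, so that $|z|\le|z|^{\gamma}$ for $0<\gamma\le1$; then $|z|\,\|\omega-\omega'\|^{\gamma}\le\bigl(|z|\,\|\omega-\omega'\|\bigr)^{\gamma}\le|z-z'|^{\gamma}$, whence the angular term is at most $M\,|z-z'|^{\gamma}$.

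Putting the two bounds together, $|\widehat{f}(z)-\widehat{f}(z')|\le M|z-z'|^{\gamma}+\|f\|_{C^0}|z-z'|$. Finally, $z,z'\in\D$ forces $|z-z'|\le2$, hence $|z-z'|^{1-\gamma}\le2^{1-\gamma}\le2$, i.e.\ $|z-z'|\le2|z-z'|^{\gamma}$; inserting this yields $|\widehat{f}(z)-\widehat{f}(z')|\le\bigl(M+2\|f\|_{C^0}\bigr)|z-z'|^{\gamma}\le2\bigl(\|f\|_{C^0}+M\bigr)|z-z'|^{\gamma}$, which is the assertion (and in particular $\widehat{f}$ is $\gamma$-H\"older, hence continuous). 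I do not anticipate a genuine obstacle: the only subtlety --- and precisely the reason the naive chain $\|\omega-\omega'\|\le c\,|z-z'|$ fails --- is that the angular coordinate is badly discontinuous near the origin, so one must keep the weight $|z|$ attached to the angular increment and absorb it via $|z|\le1$; the displayed identity is what makes this work.
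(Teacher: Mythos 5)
Your proof is correct, and it follows the same overall strategy as the paper's: split $\widehat{f}(z)-\widehat{f}(z')$ into a radial part $(z-z')f(\omega')$ and an angular part weighted by $|z|$, bound the angular increment by a multiple of $|z-z'|$ while keeping the modulus attached, and absorb the linear radial term via $|z-z'|\le 2$. The difference lies in how the key geometric inequality is obtained. The paper computes $|z-z'|^2=(|z|-|z'|)^2+4|z||z'|\sin^2(\pi\|\omega-\omega'\|)$ exactly, applies (\ref{sindul}) to get $|z-z'|\ge 4\sqrt{|z||z'|}\,\|\omega-\omega'\|$, and then assumes $|z|\le|z'|$ (so that $|z|\le\sqrt{|z||z'|}$) before pushing everything through the exponent $\gamma$ via $a^\gamma+b^\gamma$-type manipulations. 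You instead prove $|z|\,\|\omega-\omega'\|\le\tfrac12|z-z'|$ directly from the identity $|z|\bigl(e^{2\pi i\omega}-e^{2\pi i\omega'}\bigr)=(z-z')-\bigl(|z|-|z'|\bigr)e^{2\pi i\omega'}$ together with (\ref{sindul}) and the triangle inequality, which avoids both the law-of-cosines computation and the symmetrization $|z|\le|z'|$, and also handles the degenerate points $z=0$ or $z'=0$ transparently. Both routes land on the same constant $2(\|f\|_{C^0}+M)$; yours is marginally more elementary, the paper's exact identity gives the slightly stronger intermediate bound with $\sqrt{|z||z'|}$ in place of $\min(|z|,|z'|)$, which is not needed for the statement.
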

\begin{proof}Set $C=\|f\|_{C^0}$. Note
that for all $z=|z|e^{2\pi i\omega}$, $z'=|z'|e^{2\pi i\omega'}$
we have
\begin{align*}|z-z'|^2&=|z|^2+|z'|^2-2\Re
z\overline{z'}=(|z|-|z'|)^2+2|z||z'|\Re(1-e^{2\pi
i(\omega-\omega')})\\&=(|z|-|z'|)^2+4|z||z'|\sin^2(\pi\|\omega-\omega'\|)\geq
4|z||z'|\sin^2(\pi\|\omega-\omega'\|) .
\end{align*}In
view of (\ref{sindul}), it follows that
\begin{equation}\label{podwnier}
|z-z'|\geq 4\sqrt{|z||z'|}\|\omega-\omega'\| .
\end{equation}
Suppose that $|z|\leq|z'|$, then $|z|\leq \sqrt{|z||z'|}$, and
hence
\begin{align*}|\widehat{f}(z)-\widehat{f}(z')|&=|zf(\omega)-z'f(\omega')|\leq
|z-z'||f(\omega')|+|z||f(\omega)-f(\omega')|\\& \leq
C|z-z'|+M\sqrt{|z||z'|}\|\omega-\omega'\|^\gamma.\end{align*}
Since $z,z'\in \D$, we have $|z-z'|\leq
2^{1-\gamma}|z-z'|^{\gamma}$ and
$\sqrt{|z||z'|}\leq\sqrt{|z||z'|}^{\gamma}$, so
\begin{align*}|\widehat{f}(z)-\widehat{f}(z')|&\leq
C2^{1-\gamma}|z-z'|^\gamma+M\sqrt{|z||z'|}^\gamma\|\omega-\omega'\|^\gamma\\&\leq
(2^{1-\gamma}C+M)\left(|z-z'|+\sqrt{|z||z'|}\|\omega-\omega'\|\right)^\gamma.\end{align*}
In view of (\ref{podwnier}), it follows that
\begin{align*}|\widehat{f}(z)-\widehat{f}(z')|\leq (2^{1-\gamma}C+M)2^\gamma |z-z'|^\gamma
\leq 2(C+ M) |z-z'|^\gamma.\end{align*}
\end{proof}
Let $\psi:\T\times\T\to\R$ be a continuous function. Denote by
$F:\D\times\D\to\C$ the function given by
$F(z_1,z_2)=z_1z_2\psi(\omega,\theta)$ whenever $z_1=|z_1|e^{2\pi
i\omega}$, $z_2=|z_2|e^{2\pi i\theta}$.

\begin{Prop}\label{holdwu}
If $\psi$ is $\gamma$-H\"older continuous for some $0<\gamma\leq
1$ then $F$ is $\gamma$-H\"older continuous.
\end{Prop}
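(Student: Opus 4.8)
The plan is to reduce Proposition~\ref{holdwu} to the one–variable statement of Lemma~\ref{holjed} by freezing one coordinate at a time and then combining the two resulting estimates with the triangle inequality. Throughout I write $C=\|\psi\|_{C^0}$ and let $M$ be a $\gamma$--H\"older constant for $\psi$ with respect to the natural product metric on $\T\times\T$, so that $|\psi(\omega,\theta)-\psi(\omega',\theta')|\le M\,d\big((\omega,\theta),(\omega',\theta')\big)^\gamma$; the point $F(0,z_2)$ is unambiguously $0$, exactly as $\widehat f(0)=0$ in Lemma~\ref{holjed}, so $F$ is well defined on $\D\times\D$.

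First I would record the elementary slice observation: for each fixed $\theta$ the function $\omega\mapsto\psi(\omega,\theta)$ has sup–norm at most $C$ and satisfies $|\psi(\omega,\theta)-\psi(\omega',\theta)|\le M\|\omega-\omega'\|^\gamma$, i.e.\ it is $\gamma$--H\"older with constant no larger than $M$, \emph{uniformly} in the frozen variable $\theta$; symmetrically in the other coordinate. Now fix $z_2=|z_2|e^{2\pi i\theta}\in\D$. The map $z_1\mapsto z_1\psi(\omega,\theta)$ is precisely $\widehat{g}$ for $g=\psi(\,\cdot\,,\theta)$ in the notation of Lemma~\ref{holjed}, hence it is $\gamma$--H\"older with constant $2(C+M)$; multiplying it by the fixed factor $z_2$ of modulus $\le 1$ does not increase the constant. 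This gives
\[|F(z_1,z_2)-F(z_1',z_2)|\le 2(C+M)\,|z_1-z_1'|^{\gamma},\]
and by the symmetric argument $|F(z_1,z_2)-F(z_1,z_2')|\le 2(C+M)\,|z_2-z_2'|^{\gamma}$.

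Finally I would pass through the intermediate point $(z_1',z_2)$ to obtain
\[|F(z_1,z_2)-F(z_1',z_2')|\le 2(C+M)\big(|z_1-z_1'|^{\gamma}+|z_2-z_2'|^{\gamma}\big),\]
and then use $a^{\gamma}+b^{\gamma}\le 2(a^2+b^2)^{\gamma/2}$ for $a,b\ge 0$, $0<\gamma\le 1$ (each summand being bounded by $(a^2+b^2)^{\gamma/2}$), to conclude that the right–hand side is at most $4(C+M)\,|(z_1,z_2)-(z_1',z_2')|^{\gamma}$, so $F$ is $\gamma$--H\"older. There is essentially no real obstacle: the only points needing a moment's care are the uniformity of the slice H\"older constant in the frozen variable and the final conversion between the $\ell^\gamma$--type sum and the Euclidean norm on $\C^2$; everything substantive is contained in Lemma~\ref{holjed}.
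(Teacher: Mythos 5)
Your argument is correct and is essentially the paper's own proof: both reduce the two-variable statement to Lemma~\ref{holjed} by freezing one coordinate (i.e.\ passing through a mixed intermediate point such as $(z_1',z_2)$, the paper using $(z_1,z_2')$), using the uniformity of the slice H\"older constant in the frozen variable, and then combining the two slice estimates by the triangle inequality. The only cosmetic difference is the final normalization: you convert $|z_1-z_1'|^\gamma+|z_2-z_2'|^\gamma$ to the Euclidean norm on $\C^2$, while the paper bounds it by $2\left(|z_1-z_1'|+|z_2-z_2'|\right)^\gamma$; the two metrics are equivalent.
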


\begin{proof}Let $C=\|\psi\|_{C^0}$ and let $M\geq 0$ be  such
that $|\psi(\omega,\theta)-\psi(\omega',\theta')|\leq
M(\|\omega-\omega'\|+\|\theta-\theta'\|)^\gamma$. If
$z_1=|z_1|e^{2\pi i\omega}$, $z_2=|z_2|e^{2\pi i\theta}$,
$z'_1=|z'_1|e^{2\pi i\omega'}$, $z'_2=|z'_2|e^{2\pi i\theta'}$
then, by Lemma~\ref{holjed},
\[|z_1\psi(\omega,\theta')-z'_1\psi(\omega',\theta')|\leq 2(C+M)|z_1-z'_1|^\gamma,\text{ uniformly in }\theta'\]
\[|z_2\psi(\omega,\theta)-z'_2\psi(\omega,\theta')|\leq 2(C+M)|z_2-z'_2|^\gamma,\text{ uniformly in }\omega.\]
Moreover,
\begin{align*}&|F(z_1,z_2)-F(z'_1,z'_2)|=|z_1z_2\psi(\omega,\theta)-z'_1z'_2\psi(\omega',\theta')|
\\&\leq|z'_2||z_1\psi(\omega,\theta')-z'_1\psi(\omega',\theta')|+
|z_1||z_2\psi(\omega,\theta)-z'_2\psi(\omega,\theta')|\\
&\leq|z_1\psi(\omega,\theta')-z'_1\psi(\omega',\theta')|+
|z_2\psi(\omega,\theta)-z'_2\psi(\omega,\theta')|\\
&\leq 2(C+M)\left(|z_1-z'_1|^\gamma+|z_2-z'_2|^\gamma\right)\leq
4(C+M)\left(|z_1-z'_1|+|z_2-z'_2|\right)^\gamma.
\end{align*}
\end{proof}

\end{document}